\newcommand{\comment}[1]{}
    \newcommand{\set}[1]{{\left\{#1\right\}}}
\newcommand{\pa}[1]{{\left(#1\right)}}
\newcommand{\sq}[1]{{\left[#1\right]}}
\newcommand{\abs}[1]{{\left|#1\right|}}
\newcommand{\norm}[1]{{\left |#1\right |}}
\newcommand{\T}{\mathbb{T}}
\newcommand{\Z}{\mathbb{Z}}
\newcommand{\R}{\mathbb{R}}
\newcommand{\C}{\mathbb{C}}
\newcommand{\dg}{{\mathtt{D}_{\g,\cS}}}
\newcommand{\dgp}{{\mathtt{D}_{\g,\cS}}}
\newcommand{\eps}{\varepsilon}
\renewcommand{\Im}{\operatorname{Im}}
\newcommand{\pan}{{\mathcal Q}}
\newcommand{\na}{\widehat{n}}
\newcommand{\co}[1]{\textit{#1}}
\newcommand{\gr}[1]{\textbf{#1}}
\newcommand{\id}{\operatorname{Id}}
\newcommand{\ad}{\operatorname{ad}}
\newcommand{\jjap}[1]{\lfloor #1 \rfloor }
\newtheorem{prop}{Proposition}[section]
      \newtheorem{them}{Theorem}[]
    \newtheorem*{thm*}{Theorem}
    \newtheorem*{cor*}{Corollary}
    \newtheorem*{teo normal}{"Twisted Conjugacy" Theorem}
    \newtheorem{cor}{Corollary}
    \newtheorem{lemma}{Lemma}
    \theoremstyle{remark}
\newtheorem{rmk}{Remark}[section]
\theoremstyle{definition}
\newtheorem{defn}{Definition}
\numberwithin{equation}{section}
\numberwithin{thm}{section}
\numberwithin{defn}{section}
\numberwithin{prop}{section}
\numberwithin{cor}{section}
\numberwithin{lemma}{section}
\numberwithin{rmk}{section}
\newcommand{\g}{\gamma}
\newcommand{\s}{{\sigma}}
\newcommand{\scH}{{\mathscr{H}}}
\newcommand{\E}{{\mathbb E}}
\newcommand{\N}{{\mathbb N}}
\newcommand{\cB}{{\mathcal B}}
\newcommand{\cC}{{\mathcal C}}
\newcommand{\cF}{{\mathcal F}}
\newcommand{\cG}{{\mathcal G}}
\newcommand{\cH}{{\mathcal H}}
\newcommand{\cI}{{\mathcal I}}
\newcommand{\cK}{{\mathcal K}}
\newcommand{\cL}{{\mathcal L}}
\newcommand{\cM}{{\mathcal M}}
\newcommand{\cN}{{\mathcal N}}
\newcommand{\cO}{{\mathcal O}}
\newcommand{\cQ}{{\mathcal Q}}
\newcommand{\cR}{{\mathcal R}}
\newcommand{\cS}{{\mathcal S}}
\newcommand{\cT}{{\mathcal T}}
\newcommand{\cU}{{\mathcal U}}
\newcommand{\cV}{{\mathscr V}}
\newcommand{\fm}{{\mathfrak{m}}}
\newcommand{\tc}{{\mathtt{c}}}
\newcommand{\td}{{\mathtt{d}}}
\newcommand{\tq}{{\mathtt{q}}}
\newcommand{\sob}{{\mathtt{w}}}
\newcommand{\tD}{{\mathtt{D}}}
\newcommand{\tK}{{\mathtt{K}}}
\newcommand{\tM}{{\mathtt{M}}}
\newcommand{\be}{{\bf e}}
\newcommand{\al}{{\alpha}}
\newcommand{\bt}{{\beta}}
\newcommand\norma[1]{\left\lVert#1\right\rVert}
\newcommand{\im}{{\rm i}}
\newcommand{\jap}[1]{\langle #1 \rangle}
\newcommand{\und}[1]{\underline{#1}}
\newcommand{\e}{{\varepsilon}}
\newcommand{\meas}{{\rm meas}}
\definecolor{aquamarine}{rgb}{0,0.5,0.5}
\newcommand{\tw}{{\mathtt{w}}}
\newcommand{\nnorm}[1]{{\left\vert\kern-0.25ex\left\vert\kern-0.25ex\left\vert #1 
    \right\vert\kern-0.25ex\right\vert\kern-0.25ex\right\vert}}
\newcommand{\bcoeffu}[1]{#1_{\bal',\bbt'}}
\newcommand{\bcoeffd}[1]{#1_{\bal'',\bbt''}}
\newcommand{\uuu}{u^{\bal'}\bar{u}^{\bbt'}}
\newcommand{\uud}{u^{\bal''}\bar{u}^{\bbt''}}
\newcommand{\pon}{{{\Pi^{0,\mathcal{K}}}}} 
\newcommand{\modi}[1]{\abs{u_{#1}}^2}
\newcommand{\es}{e^{\set{S,\cdot}}}
\newcommand{\bal}{{\bm \al}}
\newcommand{\bbt}{{\bm \bt}}
\newcommand{\baluno}{\bal'}
\newcommand{\baldue}{\bal''}
\newcommand{\bbtuno}{\bbt'}
\newcommand{\bbtdue}{\bbt''}
\newcommand{\zero}[1]{#1^{(0)}}
\newcommand{\zeroR}[1]{#1^{(0,\cR)}}
\newcommand{\zeroK}[1]{#1^{(0,\cK)}}
\newcommand{\due}[1]{#1^{(-2)}}
\newcommand{\buon}[1]{#1^{\ge 2}}
\newcommand{\call}{\mathcal{L}}
\newcommand{\dueK}[1]{#1^{(-2,\cK)}}
\renewcommand{\o}{{\omega}}
\newcommand{\betta}{{}}
\newcommand{\ub}{v}
\newcommand{\zb}{z}
\newcommand{\bO}{{ \Omega}}
\newcommand{\cachi}{{\zeta}}
\newcommand{\ur}{{\bf u}}
\newcommand{\rr}{{\rm r}}
\newcommand{\VS}{{V_\cS}}
\newcommand{\VSc}{{V_{\cS^c}}}
\newcommand{\VSf}{{{\mathscr V}_\cS}}
\newcommand{\VSs}{{{\mathscr V}_\cS^*}}
\newcommand{\Vs}{{{\mathscr V}^*}}
\newcommand{\gatta}{{\td}}
\begin{document}
\author{Luca Biasco}
\address{Università degli Studi Roma Tre}
\email{biasco@mat.uniroma3.it}

\author{Jessica Elisa Massetti}
\address{Università degli Studi Roma Tre}
\email{jmassetti@mat.uniroma3.it}

\author{Michela Procesi}
\address{Università degli Studi Roma Tre}
\email{procesi@mat.uniroma3.it}
 
 \title{Small amplitude weak almost periodic solutions for the 1d NLS}
\begin{abstract}
 All the almost periodic solutions for non integrable PDEs 
found in the literature are
very regular (at least $C^\infty$) and, hence, very close to quasi periodic ones.
This fact is deeply exploited in the existing proofs. Proving the existence of almost periodic solutions with finite regularity is a main open problem in KAM theory for PDEs.
Here we  consider  the 
one dimensional   NLS with external parameters
and 
 construct almost periodic solutions which have only  Sobolev regularity 
both in time and space. 
Moreover many of our solutions are so only in a weak sense.  
This is the 
first result on existence of weak, i.e. non classical,  solutions 
for non integrable PDEs
in KAM theory. 
\end{abstract} 
\maketitle
\setcounter{tocdepth}{1} 
\tableofcontents

\section{Introduction}

We present here the first result on existence of weak, i.e. non classical,  solutions 
for non integrable PDEs
in KAM theory.
More precisely  we study  NLS equations on the circle
finding almost periodic solutions, 
i.e. solutions which are limit (in the uniform topology in time) of time-quasi-periodic functions.
We work on models with external parameters of the form:
\begin{equation}\label{NLS}\tag{${\rm NLS}_V$}
\im \ur_t + \ur_{xx} - V\ast \ur +
f(|\ur|^2)\ur=0\,,\qquad \ur(t,x)=\ur(t,x+2\pi)\,,
\end{equation}
where  $\im=\sqrt{-1}$
and $f(y)$ is real analytic in $y$ in a neighborhood of $y=0$ with $f(0)=0$.
{Given $V=\pa{V_j}_{j\in\Z}\in 
[-\nicefrac14,\nicefrac14]^\Z\subset \ell^\infty(\R)$,} the Fourier multiplier $V\ast$ is defined as
the bounded\footnote{Clearly the operator norm is $\|V\|_{\mathcal L(\mathcal F(\ell^1),\mathcal F(\ell^1))}=|V|_\infty:=\sup_j|V_j|.$} 
linear operator $V\ast:\mathcal F(\ell^1)\to\mathcal F(\ell^1)$
by\footnote{More precisely $(V\ast \ur)(t,x):=(V\ast \ur(t,\cdot))(x)$ for every $t\in\R.$}
\begin{equation}
\label{vu}
u(x)=\sum_{j\in\Z}  u_j e^{\im j x}\mapsto
(V\ast u)(x) = \sum_{j\in\Z} V_j u_j e^{\im j x}\,,
\end{equation}
where $\mathcal F(\ell^1)$ is the Wiener algebra
 of $2\pi$-periodic 
functions\footnote{Note that all such functions are continuous.} 
having Fourier coefficients in $\ell^1=\ell^1(\C):=\{u:= \pa{u_j}_{j\in\Z}\in \C^\Z\; : \norm{u}_{\ell^1}:= \sum_{j\in\Z}\abs{u_j} < \infty\}$ endowed with the 
corresponding
norm $|u(\cdot)|_{\mathcal F(\ell^1)}:=\norm{u}_{\ell^1}$. 
 Note that the choice of the ball 
 $[-\nicefrac14,\nicefrac14]^\Z$
 is rather arbitrary and any other  ball  is 
 acceptable.
 
Besides classical solutions, namely functions $\ur$ 
admitting continuous derivatives $\ur_t$ and $\ur_{xx}$ solving
\eqref{NLS}, we are particularly interested in
 weak solutions according to the following

 \begin{defn}[Global in time weak solutions]\label{franefritte} A function $\ur:\R^2\to\C$
 which is $2\pi$-periodic in $x$ and 
 such that the map
 $t\mapsto \ur(t,\cdot)\in\mathcal F(\ell^1)$ is continuous
  is a weak solution of  \eqref{NLS}
 if for any smooth compactly supported function $\chi:\R^2\to\R$ one has
  \begin{equation}\label{ranefritte}
 	\int_{\R^2} (-\im \chi_t + \chi_{xx})\ur - (V\ast \ur -
 	f(|\ur|^2)\ur)\chi\, dx\, dt\,=0\,.
 \end{equation}
 \end{defn}
\noindent Note that according to our definition a weak solution is a 
 continuous\footnote{Indeed $|\ur(t,x)-\ur(t_0,x_0)|
 \leq |\ur(t,\cdot)-\ur(t_0,\cdot)|_{\mathcal F(\ell^1)}+|\ur(t_0,x)-\ur(t_0,x_0)|$.}
  function
 on $\R^2$.
 
Local well posedness  for equations as \eqref{NLS} is  well known even in lower regularity 
 (see, e.g. \cite{Bou93} and \cite{Bou94}).  In the context of integrable PDEs  there are 
 various results on weak almost-periodic solutions, we mention \cite{KM,KM2} for the KdV and mKdV,  \cite{GK} for the Szego equation and \cite{GeKa} for the Benjamin-Ono.
In the present paper we work close to the elliptic fixed point $\ur=0$ and consider  \eqref{NLS} as a small (non integrable) perturbation of the (integrable) linear Schr\"odinger equation and prove the persistence of  almost-periodic solutions.
  \begin{them}\label{deltoide}
 For almost every Fourier multiplier $V$
there exist infinitely many small-amplitude weak almost-periodic solutions  of \eqref{NLS}.
Infinitely many of such solutions are not classical and infinitely many are classical.
 \end{them}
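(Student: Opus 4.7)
The plan is to cast \eqref{NLS} as an infinite-dimensional Hamiltonian system on the Fourier side and run a KAM iteration, using the multipliers $V_j$ as external parameters to control small divisors. Writing $\ur(t,x)=\sum_{j\in\Z} u_j(t)e^{\im j x}$, the equation becomes Hamiltonian on sequence space with
\begin{equation*}
H(u,\bar u) \,=\, \sum_{j\in\Z} (j^2+V_j)|u_j|^2 \,+\, P(u,\bar u),
\end{equation*}
where $P$ collects the contribution of $f(|\ur|^2)\ur$. Near $u=0$ this is a small perturbation of the linear flow, which is already almost periodic with frequencies $\lambda_j(V)=j^2+V_j$.

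First I would fix an action profile $\II^*=(I_j^*)_{j\in\Z}$ with prescribed decay rate, chosen so that $\sum_j\sqrt{I_j^*}<\infty$ (ensuring the putative solution lives in $\mathcal F(\ell^1)$), and seek an invariant almost-periodic torus of the form
\begin{equation*}
u_j(t) \,=\, \sqrt{I_j^*+y_j(t)}\; e^{\im(\theta_j^0+\omega_j^\infty t)}
\end{equation*}
after a symplectic change to action--angle--normal coordinates $(y,\theta,z)$ around $\II^*$. The frequencies $\omega_j^\infty$ are nonlinear corrections of $\lambda_j(V)$ coming from the average of $P$. I would then build a sequence of symplectic transformations in KAM style, putting $H$ into normal form at each step by solving a homological equation $\set{N,S}=-P^{\leq 2}$ whose divisors are of the form $\omega\cdot k+\sigma_i\Omega_i+\sigma_j\Omega_j$ for $k\in\Z^\Z$ of finite support and $\sigma\in\set{-1,0,1}^2$. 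Varying $V$ one excises, at each scale, a small-measure set where these divisors are too small; crucially, the quantitative second Melnikov conditions must be formulated with weights adapted to Sobolev (polynomial) decay rather than with the exponential weights of the analytic case.

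The main obstacle, and the novelty of the result, is running this scheme in \emph{finite} regularity. Standard almost-periodic KAM schemes for PDEs rely on analyticity or Gevrey decay of the actions $(I_j^*)$ so that the derivative/weight losses incurred at each iteration step are absorbed by a shrinking analyticity (or Gevrey) strip. With only Sobolev decay there is no strip to shrink, so one must work with a two-parameter family of norms (one index for spatial regularity, one weight for the decay of Fourier modes), produce tame commutator estimates for the Poisson bracket adapted to these norms, and close the quadratic iteration without ever leaving the Sobolev category. I would expect a smoothing/Nash--Moser truncation at each step, together with a delicate measure-theoretic estimate showing that the union over scales of the excised resonant $V$'s still has zero Lebesgue measure.

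Finally, once the KAM theorem is available, Theorem \ref{deltoide} follows by choosing the action profile. If $\II^*$ decays fast enough that $\sum_j j^2 \sqrt{I_j^*}<\infty$, the resulting $\ur$ has $\ur_{xx}$ continuous in $(t,x)$ and is a classical solution of \eqref{NLS}; if instead one only imposes $\sum_j \sqrt{I_j^*}<\infty$ with slower tail, the solution lies in $C(\R,\mathcal F(\ell^1))$ but not in $C^2$, and satisfies \eqref{NLS} only in the sense of Definition \ref{franefritte}. Since the KAM theorem permits $\II^*$ to vary in a Cantor set of positive measure within each decay class, both families are infinite, and the full-measure set of admissible parameters $V$ arises by intersecting the countable collection of Diophantine conditions needed for the chosen profiles.
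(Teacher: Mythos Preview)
Your sketch has the right high-level architecture (Hamiltonian formulation, KAM with $V$ as external parameters, regularity dichotomy via the decay of the action profile), but it is missing the central idea that makes the Sobolev KAM close, and the mechanism you propose in its place is precisely the one the paper says does \emph{not} obviously work.

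The decisive point is that the paper does \emph{not} look for full tori supported on all of $\Z$. It fixes an \emph{admissible sparse} set of tangential sites $\cS\subset\Z$ (Definition~\ref{orey}), with $s(i)$ growing faster than any polynomial, and restricts the actions to be supported on $\cS$. On such a sparse support, polynomial decay in $j$ becomes, after reindexing by $i=i(s)$, decay that is effectively exponential/Gevrey in $i$; this is what allows the very strong Diophantine condition \eqref{diofantino nuc} to have positive measure and what yields the small-divisor bound \eqref{ss148} with a loss of regularity $\delta$ that can be made summable along the iteration. Your proposal keeps a full-support profile $I^*$ and tries to absorb the derivative losses via Nash--Moser smoothing and tame commutator estimates. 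The authors explicitly flag this as an open issue (see the discussion in the introduction: ``it is not clear if one can exploit similar ideas, since we are already working with analytic nonlinearities''); they bypass it entirely through sparseness rather than through tame/smoothing machinery. Without the sparse $\cS$, your homological-equation step would face a fixed lower bound on the regularity loss and the scheme would not converge.

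Two secondary points. First, the infinitude of solutions for a fixed $V$ is obtained not by moving the actions inside one Cantor set but by taking countably many \emph{different} admissible sets $\cS$ and intersecting the corresponding full-measure sets of good potentials; distinct $\cS$'s produce solutions that are bounded away from each other because $\Phi$ is close to the identity (see the fourth step in Section~\ref{prove}). Second, your outline does not say how the limit object is shown to be a weak solution. The paper does this by an approximation argument: the invariant torus for actions $I$ is approached by finite-dimensional invariant tori for truncated actions $I^{(n)}$, each supporting a smooth quasi-periodic \emph{classical} solution of \eqref{NLS} with a nearby potential $V^{(n)}$; Lipschitz dependence of $\Phi$ and $\VSf$ on $I$ gives uniform convergence, and one passes to the limit in the weak formulation \eqref{ranefritte}. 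The regularity split is then as you guessed, but phrased in terms of the exponent $p_*$ of $\tw_{p_*}$: $p_*>3$ yields classical solutions, while for $p_*\le 2$ one checks that $\ur(t,\cdot)\notin H^2$ via the $\jap{j}^{-p_*}$ profile on $\cS$.
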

 
 \noindent
Here by {\co{almost every}} we mean a full measure 
set with respect to  the {\sl product  probability 
measure} on $[-\nicefrac14,\nicefrac14]^\Z$.
The Borel sets on such measure are respect to the 
product topology on $[-\nicefrac14,\nicefrac14]^\Z$
(see Appendix \ref{ciavatta}). {This theorem will be derived in Section 3 from Theorem \ref{teo kam} and Theorem \ref{toro Sobolevp}, stated below. }

\bigskip

Before describing in more detail our results, we briefly motivate why one is interested in almost-periodic solutions and particularly in those of low regularity.
\\
In the study of finite dimensional nearly-integrable Hamiltonian systems  KAM Theorems play a pivotal role,
casting a light that illuminates the picture quite clearly.
 Indeed under some (generic) non-degeneracy assumptions
 most of the phase space of such systems  is foliated by maximal invariant tori,
whose dimension is half of the one of the whole space.
In particular the system is not ergodic and the majority of initial data give rise  to
quasi-periodic solutions that densely fill some invariant torus and are, therefore, 
perpetually stable.   
  Possible chaotic behavior is restricted to a set of small measure. 
  \\
On the other hand,  in the infinite dimensional setting, for example in the PDEs case,
the general picture is so far rather obscure and
the main questions still remain unanswered.
The typical  solutions
of an infinite dimensional integrable system are the
\co{almost-periodic} ones that lie on maximal infinite dimensional invariant tori;
what is their fate under perturbation?
Is it still true that the majority of initial data produce perpetually stable solutions?
The meaning and possible answers to these questions are deeply related to the regularity
of the phase space in which one looks for solutions.
These questions are completely open
and only very partial answers  are available.
In particular the known results are on spaces of very high regularity (contained in $C^\infty$).
On the other hand,  finite regularity solutions appear naturally and are widely studied in the PDE-context. 
For instance the persistence of Sobolev almost periodic solutions 
is the first open problem on KAM for PDEs mentioned 
 by S. Kuksin  in \cite{Kuk15} (see e.g. Problem 7.1). Even though our solutions are quite special, being mainly supported on sparse set of Fourier's modes, the present work can be seen as a first step forward in this direction.
\\
{As is usual in KAM Theory, a key point in the study of the dynamics in the neighborhood of these invariant tori, consists in controlling the spectral properties of appropriate linear operators and dealing with the connected problem of small-divisor. The main difficulty is then to guarantee that suitable arithmetic (Diophantine) conditions on the frequencies are fulfilled all along the scheme, so that small-divisors can be bounded accordingly and the almost-periodic dynamics controlled.}
Extending Diophantine (or similar) estimates, which strongly depend on the dimension,
to the infinite dimension is not straightforward.
In fact all the results on almost-periodic solution for PDEs only deal with 
 special model cases and consider 
 parameter dependent equations.
\vskip3pt
{\it Parameter vs. regularity issues.} 
 Quoting Bourgain \cite{Bourgain:2005}: ``the role of this parameter is essential to ensure appropriate non-resonance properties of the (modulated) frequencies along the iteration.''
\\
As we shall see below, in all the existent literature, the external parameters have rather low regularity (giving a good frequency modulation) while the almost periodic solutions have an \co{extremely fast} decay in their Fourier coefficients, which approach zero 
  super-exponentially, exponentially or sub-exponentially (Gevrey). This means that those solutions are ``very close" to quasi-periodic ones.
 For example  P\"oschel  in \cite{Poschel:2002} studies an NLS with a multiplicative potential in $L^2$(producing an infinite set of free parameters) and smoothing non-linearity and constructs almost-periodic solutions iteratively,   through successive small perturbations of finite (but at each step higher) dimensional invariant  tori. This leads to  a very strong compactness property: in order to overcome the dependence  of the KAM estimates on the  dimension, the distances  of these tori have to shrink super-exponentially, this leading to very regular solutions. See also \cite{GX13} for a generalization of P\"oschel's approach to the analytic cathegory, by using T\"oplitz-Lipschitz function techniques. 
 In his pioneering work \cite{Bourgain:2005} on the quintic NLS with Fourier multipliers (providing external parameters in $\ell^\infty$),  Bourgain  proposed a different approach 
 which does not rely on  approximations  by quasi-periodic functions 
 by working directly in Fourier space, and relying on a Diophantine condition which is tailored for the infinite dimension. For most choices of the parameters, this leads to the construction of almost periodic invariant tori which support Gevrey solutions
 (see also \cite{BMP:almost},\cite{CoY}). 
\noindent
In any case, all the above results are valid for most choices of the (infinitely many) external parameters. An open question is whether one can achieve a similar result by modulating finitely many parameters. This would lead to applications to more natural PDEs. \\
Of course, the most challenging scenario is represented by a fixed PDE, where the parameters should be ``extracted" from the initial data.

{\it Almost-periodic solutions for a fixed PDE.} Again, quoting Bourgain ``In the absence of external parameters, these (non resonance, \co{ed.})  conditions need to be realized from amplitude-frequency modulation and suitable restriction of the action-variables. This problem is harder. Indeed, a fast decay of the action-variables (enhancing convergence of the process) allows less frequency modulation and worse small divisors". \\
Summarizing, the possibility of constructing almost-periodic solutions for a \co{fixed} PDE, i.e. ``eliminating" the external parameters through amplitude-frequency modulation, appears to be intimately related to the regularity issues. 
 {Moreover, in the context of completely integrable PDEs, the invertibility of the amplitude-frequency map is known only in spaces of very low regularity {(see \cite{KapMas})}.
It then becomes fundamental to look for  almost-periodic solutions in lower regularity spaces if we want to bypass the introduction of external parameters.} While it is possible to lower the regularity beyond the Gevrey class (see  \cite{Co})
up to now
all the known solutions are at least $C^\infty$.
On the other hand
 finding {\sl finite regularity} solutions appears to be a very difficult question, due to extremely small divisors.
\\
{Comparable difficulties in tackling the Sobolev case appear in Birkhoff Normal Form theory for PDEs leading to two different types of results.
In the analytic or Gevrey case one has sub-exponential stability times
(see \cite{Faou-Grebert:2013} and \cite{Cong}),
whereas in the Sobolev case the stability times appear to be controlled by the Sobolev exponent
(see \cite{Bambusi-Grebert:2006}, \cite{FI}, \cite{Berti-Delort},\cite{BMP:2019}, \cite{FM:weak}).}
\\
  The counterpart of total and long time stability results is the construction of unstable trajectories, which undergo  growth of the Sobolev norms, see \cite{Bou1,CKSTT, GuaKa, GHHMP, GGMP}.
  \vskip3pt
{\it Comparison with the quasi-periodic case.} In the context of quasi-periodic solutions there is a wide literature regarding the finite regularity case (starting from the seminal paper \cite{Kuk88}). 
A good  strategy, which works also for fully-nonlinear PDEs,  is to apply a Nash-Moser scheme and prove tame estimates on the inverse of the linearized equation at an approximate solution.
This method was proposed in
\cite{BB1} (generalizing the seminal works \cite{CW}, \cite{Bo4} concerning the analytic case) via multi-scale analysis, see also
\cite{BB3}
or   \cite{BBM1} for a reducibility approach.
\\
Note that in the  existing KAM literature \co{finite regularity} solutions are always classical and due to \co{finite regularity} nonlinearities.
Typically one
looks for an {\sl invariant  embedded finite} dimensional torus 
in a fixed phase space of $x$-dependent functions. 
Then the regularity of the embedding is clearly related to the one of the non-linearity,
which, in the Nash-Moser schemes, is required to be high enough increasing with
the dimension of the torus. 
This is not surprising since in  $n$-dimensional Hamiltonian systems
the minimal regularity of the Hamiltonian  vector field in order to construct a 
maximal Diophantine  torus is essentially $C^n$
(see \cite{Her},\cite{Kou},\cite{Poe}).
For analytic nonlinearities one obtains analytic (or, at least, $C^\infty$)
embedded tori and hence smooth in time solutions. 
By bootstrap arguments, smoothness in space follows.
We finally stress that in the integrable case 
one can construct even periodic solutions with very low regularity,
see for instance \cite{GeKa}. 
\\
In comparison, most of the almost-periodic literature concentrates on the construction of the solutions rather than the invariant objects. Inspired by \cite{Bourgain:2005}, 
 a conceptual novelty of \cite{BMP:almost}
 was to look directly for 
     an{\sl \footnote{In the infinite-dimensional case, whether this is an embedding depends strongly on the chosen topology. See discussion in Subsection \ref{armando}.} infinite dimensional invariant torus}. 
     The strategy developed in   \cite{BMP:almost} applies also to non maximal tori
     both of infinite and finite dimension.
     The persistence result was achieved through an abstract normal form theorem ``à la Herman" (in finite dimensional systems see \cite{Massetti:ETDS, Massetti:APDE}), whose estimates are  \co{uniform in the dimension} of the considered torus
     (see \cite[Theorems 3 and 7.1]{BMP:almost}).
     \\
     Note that in looking for infinite dimensional tori the topology of the phase space
     plays a fundamental role.  In particular the analyticity of the embedded torus 
     does not necessarily imply the analyticity in time of the solution,  at 
	least if the frequencies are unbounded as it is typical in PDEs. 

{\it Small divisors for infinite frequencies  in finite regularity.}
As discussed above, lowering the regularity of the phase space 
requires dealing with very small divisors.
In order to apply the general approach of  \cite{BMP:almost} in a phase space    with finite regularity one needs to look at the problem from a novel perspective and introduce new ideas.
The starting point is 
 to look for {\sl special tori which are approximately supported, in Fourier space, on a sparse subset of $\Z$} called ``tangential sites''(see Definition \ref{orey}), then prove that our solutions are
 supported on such sites, see \eqref{legna}, up to a close to identity change of variables.
The crucial fact is that  the choice of tangential sites provides an extra set of parameters which can be used, for instance,  in order to avoid resonances or simplify small divisor estimates (see for instance \cite{PP3}, \cite{HP}).
In fact,  we prove that \co{an appropriate choice} of the tangential sites, see definition \ref{orey}, allows us to impose {\it very strong} Diophantine conditions, see Definition \ref{diomichela},  so that our small divisors can be controlled similarly to the Gevrey case of \cite{Bourgain:2005, BMP:almost}. 
\\
Having constructed an invariant torus contained in the phase space, 
we  show that  it is the support of weak almost periodic solutions (according to Definition \ref{franefritte}). 
Finally we prove that many
 tori support  almost periodic solutions which are not classical.
 
 \medskip
 
 	\gr{Acknowledgments.} We wish to thank D. Bambusi, M. Berti, L. Corsi, R. Feola, E. Haus, T. Kappeler, J.-P. Marco, and A. Maspero for useful discussions and suggestions. 
	We are also grateful to the anonymous referees 
	for their careful reading of the paper as well as for
	pointing out some bugs in the first version; 
	their suggestions greatly helped us to improve the exposition.
	\\ 
	The authors are partially supported, by PRIN 2020XB3EFL, Hamiltonian and Dispersive PDEs".
	 J.E. Massetti and M. Procesi acknowledge the support of the INdAM-GNAMPA grant ``Spectral and
dynamical properties of Hamiltonian systems”. J.E. Massetti acknowledges the support of the INdAM-GNAMPA grant ``Chaotic and unstable behaviors of infinite-dimensional dynamical systems" .

 \smallskip

\section{Main results}

It is convenient to exploit the Hamiltonian nature of \eqref{NLS}. As was shown by Bourgain in \cite{Bourgain:2005} it is most convenient to use  a product space as phase space. To this purpose
we  introduce the scale of Banach 
spaces\footnote{Obviously one could also take the more standard weight $\jap{j}:= \max\{1, |j|\}$
	instead of $\jjap{j}$, which
	generates the same Banach space, namely $\tw_p\equiv \ell^{\infty, p}$.
	We made such choice for merely technical reasons (see proof of the monotonicity property given in Proposition \ref{che testicoli} contained in \cite[Proposition 6.3]{BMP:2019}).} 
\begin{equation}\label{pecoreccio}
	\tw_p
	:= 
	\set{u:= \pa{u_j}_{j\in\Z}\in\ell^1(\C)\; : \quad \norm{u}_{p}:= \sup_{j\in\Z}\abs{u_j} \jjap{j}^{p}< \infty},\,  p>1\,.
\end{equation}
where $
	\jjap{j} := \max\{2, |j|\}.$
We look for solutions in the Fourier-Lebesgue space $\cF(\tw_p)$ of  $2\pi$-periodic functions whose Fourier coefficients belong to $\tw_p$. In the following we will identify functions and sequences
writing, e.g. $\ell^1$ instead of $\mathcal F(\ell^1)$ or $\tw_p$  instead of $\mathcal F(\tw_p)$ and so on.
Moreover, we have the following standard immersion properties\footnote{For $u\in\tw_p$ we have
	$
	\|u\|_{L^\infty}\,, \,\|u^{(k)}\|_{L^\infty}\leq \sum_{j} \jap{j}^k|u_j|\leq \sum_{j} \jjap{j}^{k-p}|u|_{\tw_p}.
	$
	Moreover if $u\in H^p$ then,  by Parseval equality, 
	$\sum_j |j|^{2p}|u_j|^2=\|u^{(p)}\|_{L^2}^2$ and 
	$\sum_j |u_j|^2=\|u\|_{L^2}^2$, so that
	$\sup_j |j|^{p}|u_j|\leq\|u^{(p)}\|_{L^2}$ and 
	$|u_0|\leq\|u\|_{L^2}$, proving the second inequality in \eqref{katsendorff}.}.
	For $0\leq k<p-1$ we have
	$C^p\subset H^p\subset \tw_p\subset C^{k}$ 
	with estimate
	\begin{equation}\label{katsendorff}
		\tc
		\max\{\|u^{(k)}\|_{L^\infty},\|u\|_{L^\infty}\}
		\leq |u|_{\tw_p}
		\leq 2\max\{\|u^{(p)}\|_{L^2},\|u\|_{L^2}\}\,,
	\end{equation}
	where the first inequality holds for $u\in\tw_p$ and the second one for $u\in C^p$
	and where  $\tc^{-1}:=\sum_{j} \jjap{j}^{k-p}$.

We endow $\sob_{p}\subset \ell^2$ with the symplectic structure 
$\im\sum_j d {u}_j\wedge d\bar{u}_j$  inherited from $\ell^2$.
We define  the NLS Hamiltonian
\begin{equation}\label{Hamilto0}
\begin{aligned}
	& H_V(u):=\sum_{j\in\Z} (j^2+V_j) |u_j|^2+P\,,\, \, \mbox{with } \\ 
	&P:=- \int_\T F(|\sum_j u_j e^{\im j x}|^2) dx \,,\quad F(y):=\int_0^y f(s) ds\,.
	\end{aligned}
\end{equation}
Note that  $\tw_p$ is a Banach algebra w.r.t convolution and moreover
$V\ast:\tw_p\to\tw_p$ is a  linear bounded operator with norm $|V|_\infty.$ This implies  that \eqref{Hamilto0} is an analytic function on
 $\tw_p\cap H^1$, see Proposition \ref{neminchia}.
 \\
 By \eqref{katsendorff}, if $p>3/2$ then $\tw_p\subset H^1$. Otherwise, for  $1<p<3/2$ we might have  infinite energy.
Anyway, for our purposes, we only need
 that the Hamiltonian vector field 
$$
X_{H_V}^{(j)} := \im \partial_{\bar{u}_j}H_V = \im (j^2 + V_j) u_j +\im \partial_{\bar{u}_j} P
$$
is well defined component-wise.
For completeness  we remark that
 the flow of $H_V$ is locally well-posed on  $\tw_p$, for $p>1$. 
 Indeed in Proposition \ref{neminchia}
 (see also \eqref{panettone})
 we will show that the Hamiltonian vector 
 field $X_P$ is a uniformly bounded map from 
 a suitable ball around the origin of 
$\tw_p$ to $\tw_p$;
therefore, by standard variation of constants,
we obtain the 
local  well-posedness on  $\tw_p$.
\smallskip

The proof of Theorem \ref{deltoide} is based on the construction of an analytic change of variables on the phase space $\tw_p$, which conjugates the nonlinear dynamics to a linear one, supported on an invariant flat torus. The regularity of the solution in the original variables is then deduced from the dynamics on the flat torus.
Since these issues require some care, we find it instructive to first discuss the linear case,  where the key difficulties become transparent.

\subsection{The linear case}\label{armando}
When $f=0$ the Hamiltonian reduces to its quadratic part $\sum_{j\in\Z} (j^2+V_j) |u_j|^2,$ so that the linear actions $|u_j|^2$
are constants of motions and the dynamics is
$$
u_j(t)=  u_j(0)e^{\im  \nu_j t}\,, \quad j\in\Z\,, \qquad
\nu=(\nu_j)_{j\in\Z}\,,\ \ \nu_j:=j^2+V_j\,.
$$

\noindent
Let us call $\mathcal S_*:=\{j\in\Z\ \mbox{s.t.}\ u_j(0)\neq 0\}$.
If $\mathcal S_*$ is a finite set,
the corresponding solution $\ur(t,x):=\sum_{j\in\Z}u_j(t) e^{\im jx}$ is quasi-periodic and 
analytic both in time and space.
If $\mathcal S_*$ is  infinite, the regularity of $\ur(t,x)$
 obviously depends on the one of the initial datum.
If $u(0):=(u_j(0))_{j\in\Z}\in\ell^1$ then
$\ur(t,x)$ is a  weak solution of \eqref{NLS}  in the sense of \eqref{ranefritte}.  
Moreover, such solution is a time almost-periodic function,
being limit in $\mathcal F(\ell^1)$ of the quasi periodic truncations
$\sum_{|j|\leq n}u_j(0)e^{\im \nu_j t+\im j x}$ as $n\to\infty$.
Furthermore if $(u_j(0))_{j\in\Z}\in\tw_p$
then $t\mapsto \ur(t,\cdot)\in\mathcal F(\tw_{p})$
and its truncations converge\footnote{Note that 
the convergence
does not necessarily hold in $\tw_p$.  See the examples below.} in 
$\tw_{p'}$ with $1<p'<p$.
\\
Note that if $p>3$ we have a classical solutions
(recall \eqref{katsendorff}).
Otherwise if $p\le 2$ one can easily produce non classical solutions. Indeed 
take  any infinite subset $\cS_*$ of $\Z$ and
 set $u_j(0)=\jap{j}^{-p}$
for $j\in \cS_*$ and $u_j(0)=0$ otherwise.	
Then, if $p\le 2$
\begin{equation}\label{piscia}
\limsup_{|j|\to\infty} j^2|u_j(t)|
=\limsup_{|j|\to\infty} j^2|u_j(0)|> 0\,,
\end{equation} 
so  $\ur(t,\cdot)\notin H^2$. Finally, if $\cS_*=\Z$  and $p\le 3/2$ then $\ur(t,\cdot)\notin H^1$.

The support of each solution is an invariant torus in the following sense.	Given
$I:=(I_j)_{j\in\Z}\in \tw_{2p}$ with $I_j\geq 0.$
  We define the flat torus
	\begin{equation}\label{pippero}
			\cT_I:=\{u\in \tw_{p}: |u_j|^2= I_j \quad \forall j\in \Z\}
		\end{equation}
		and 
	the set
	\begin{equation}\label{shula}
\cS_I:=\{j\in\Z\ \mbox{s.t.}\ I_j>0\}\,.
\end{equation}	
Then 	 the map  $\mathfrak i: \T^{\cS_I} \to \cT_I\subset \tw_p,\quad \varphi=(\varphi_j)_{j\in\cS_I}\mapsto \mathfrak i (\varphi)\,,$ with
	\begin{equation}\label{manodedios}
	\begin{cases}
	\mathfrak i_j (\varphi):= \sqrt{I_j} e^{\im \varphi_j} & \ {\rm for } \ j\in\cS_I\,,\\  
	\mathfrak i_j (\varphi):=0 & \ {\rm otherwise}\,,
	\end{cases}
	\end{equation}
  is an analytic 
	immersion\footnote{Assuming also that 
	$\inf_j \sqrt{I_j}\jap{j}^p>0$ the map $\mathfrak i$ is an embedded torus. Otherwise, $\mathfrak i$ is an homeomorphism on the image only if we endow both source and target spaces with the product topology} provided that we endow $\T^{\cS_I}$ with the $\ell^\infty$-topology
	(see Lemma \ref{lemmadedios} below for details). 
By construction the linear dynamics on the torus $\cT_I$ is $\varphi \to \varphi+ \nu t$. 
\\	
Since the map $ t\mapsto \nu t\in \T^{\cS_I}$ is not even continuous (endowing $\T^{\cS_I}$ with the $\ell^\infty$-topology),  the regularity of $t\mapsto\mathfrak i(\nu t) $ depends on the choice of the actions $I_j$. In the examples discussed above
it  is not continuous w.r.t. the strong\footnote{Note that the map is continuous endowing $\tw_p$ with  the product topology, which coincides with the weak $*$ topology on bounded sets.} topology, see also \cite{KM}. 
{In contrast with the finite dimensional case, even if $\nu$ has rationally independent entries,  it is not straightforward to understand whether this invariant torus is densely filled\footnote{In the product topology such solutions are always dense. } by the solution's orbit or not. In fact, this issue is related to the asymptotic behavior of $\nu$. }
We will discuss this in Lemma \ref{uccellagione}.
	
\begin{defn}[Invariant flat torus]\label{uccellone}
Let $I\in\tw_{2p}$
and $\cT_I$ defined in \eqref{pippero}.
Consider a vector field $X=(X^{(j)})_{j\in\Z}:\cT_I\to \C^\Z$.
We say that $\cT_I$ is an 
invariant flat torus for $X$ with frequency
$\nu\in\R^\Z$
if 
		$$
		\dot u_j(t)=X^{(j)}(u(t))\,,
		\quad u_j(t):=u_j(0) e^{\im \nu_j t}\,, \quad   \forall t\in \R\,, \quad |u_j(0)|^2 = I_j, \quad
		j\in\Z\,.
		$$	
\end{defn}
\noindent
Note that the frequency $\nu_j$ is uniquely defined only for $j\in 
\mathcal S_I$.
\smallskip

\subsection{The nonlinear case}\label{alfonso}
In the nonlinear setting, as explained in the introduction, our invariant tori are approximately supported on $\mathcal S_I$
contained in an appropriately  sparse set $\cS\subset\Z$.

\begin{defn}[Admissible tangential sites]\label{orey}
\smallskip

a) \, An infinite subset $\cS$ of  $\N$, referred to as the {\it set of tangential sites in $\N$}, is said to be {\it admissible}  if there exists a
a smooth strictly increasing
function \footnote{The function $s(i)$ is obviously not unique but 
its restriction to $\cS$ is unique. The same holds for its inverse $i(s)$.}  
$s:[0,+\infty)\to[0,+\infty)$  with the following properties:

\begin{itemize}[leftmargin=*]
\item $\cS=s(\N)$
\item  there exists $i_*\geq 21$  such that
\begin{subequations}
\begin{equation}
\label{cicoria}
s(i)\geq e^{(\log i)^{1+\eta}}\,, \qquad
\forall i\geq i_*\,,\qquad
\mbox{for some given}\ \ \ 
1<\eta\leq 2\,;
\end{equation}
\begin{equation}\label{theshow}
s(i+i')\geq s(i)+s(i')\,,\qquad
s(h i)\geq h s(i)\,,\qquad
\forall\,h\geq 1\,,\ \ i, i'\geq i_*\,;
\end{equation}
\begin{equation}\label{mustgoon}
s(i^2)\geq s^2(i)\,,\quad\forall i\geq i_*\,.
\end{equation}
\end{subequations}
\end{itemize}
We denote by $i(s)$ the inverse function of $s(i).$

\smallskip
\noindent
b)\,  An infinite subset $\cS$ of  $\Z$, referred to as the {\it set of tangential sites}, is said to be {\it admissible}  if there exist a finite subset $\cS_0$ of $\Z$ and two admissible sets of tangential sites  $\cS_1,\cS_2\subset \N$ so that $\cS$ is one of the following subsets
$$
\cS_1,\qquad -\cS_1, \qquad (-\cS_1)\cup \cS_0,\qquad \cS_1\cup\cS_0 \quad (-\cS_1) \cup \cS_2\cup \cS_0\,.
$$
\end{defn}

\smallskip
\begin{rmk}\label{diego}
Definition \ref{orey} above  gives a quantitative control on how "sparse" the set $\cS$ should be.
In particular by \eqref{cicoria} the function $s(i)$ must grow faster than any polynomial.
\end{rmk}

\noindent
\textit{Examples of admissible tangential sites in $\N$}. $\cS := \set{ 2^i\,:\, i\in\N}$; choose\footnote{See \cite{BMP:linceisob}} $s(x) = 2^x$. Or, given $\eta >0$ as above, set $\cS := \set{[e^{(\log i)^{1+\eta}}]\,: i\in\N}$  ($[\cdot]$ being the integer part), and choose $s(x) = [e^{(\log x)^{1+\eta}}]$, for any $x>0$.


In what follows, {given a sequence indexed over $\Z$ we systematically decompose it over $\cS$ and $\cS^c:=\Z\setminus\cS$};
for example,
for the potential $V$ we write
$$
V=(\VS,\VSc)\,,\qquad \VS:=(V_j)_{j\in\cS}\,,\ \ \ \VSc:=(V_j)_{j\in\cS^c}\,.
$$
Analogously, we define the set of {\sl tangential frequencies} as
\begin{equation}
\label{panc}
\pan_\cS := \set{\nu = (\nu_j)_{j\in\cS}\in \R^\cS\, : \, \abs{\nu_j- j^2} < \frac12}\,.
\end{equation}
The cube 
$\pan_\cS$ inherits the product topology\footnote{Recall that if $X_i$, $i\in I$, are topological metrizable spaces, the product topology on $X:= \prod_{i\in I} X_i$ is the topology of the point-wise convergence, meaning that a sequence
$x^{(k)}=(x^{(k)}_i)_{i\in I}$
 converges iff $x^{(k)}_i$ converges for all $i\in I.$
  } and the product probability  measure ${\rm meas}_{\cQ_\cS}$
from $[-\nicefrac12,\nicefrac12]^\cS$
through the map
\begin{equation}\label{piramide}
\VSs:\pan_\cS\ \to \ [-\nicefrac12,\nicefrac12]^\cS\,,\qquad
\mbox{where}\qquad
\mathscr V^*_{\cS,j}(\nu):=\nu_j-j^2\,,\ \ \ j\in\cS\,.
\end{equation}
Moreover the above map  also endows  $\cQ_\cS$ with the 
$\ell^\infty$-metric, which induces a finer topology.
\\
Finally, we denote by $B_{r}(\tw_p)$ the open ball of radius $r$ centered at the origin of $\tw_p$, and   for $\rr>0$ we define 
define the \co{tangential actions}
\begin{equation}\label{pifferello}
\mathcal I(p,\rr):=\{ 
 		I\in { B}_{\rr^2}(\tw_{2p})\,: \, I_j=0\,\mbox{ for}\quad j\in\cS^c\,,\ \  I_j \geq 0\, \mbox{ for}\, j\in\cS
		\}\,.
 	\end{equation} 
We are now ready to state our main dynamical result regarding the existence of invariant KAM tori, parameterized by $I,\VSc$ and by the frequency $\nu$.
	\begin{them}\label{teo kam}
	Assume that $\cS\subset\Z$ is an admissible set of tangential sites,
		 $p>1$ and
	$\rr>0$ is sufficiently small.
	Moreover assume that
	 $$
	 I\in\cI(p,\rr)\quad\mbox{and}\quad  
	 \VSc=(V_j)_{j\in\cS^c}\in [-\nicefrac14,\nicefrac14]^{\cS^c}\,.
	 $$
	 Then there exists a Cantor-like set 
	 $\cC\subset\pan_\cS$ (recall \eqref{panc}) of positive  measure such that for any frequency $\nu\in\cC$, 
		there exists
		$$
		V_{\cS}=(V_j)_{j\in\cS}\in [-1, 1]^{\cS}
		$$ and a symplectic diffeomorphism $\Phi$ analytic on a small ball in $\tw_{p}$ such that,  for all  $\nu\in \cC$,  
		$$
		\cT_I=\{u\in \tw_{p}: |u_j|^2= I_j \quad \forall j\in \Z\}
		$$
		is { an invariant flat  torus} of frequency $\nu$ (recall Definition \ref{uccellone}) for the Hamiltonian vector field of
		$
		H_{V} \circ \Phi$,  with $V= (\VS,\VSc)$. 
		\end{them}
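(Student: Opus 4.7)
The plan is to recast Theorem \ref{teo kam} as a KAM iteration in the spirit of \cite{BMP:almost}, but carefully tuned so that the change of variables converges in the Sobolev-type Banach algebra $\tw_p$. First I would pass to partial action-angle variables on the tangential directions: for $j\in\cS$ set $u_j=\sqrt{I_j+y_j}\,e^{\im\varphi_j}$ while keeping $u_j=z_j$ for $j\in\cS^c$. In these coordinates \eqref{Hamilto0} takes the form
$$ H \;=\; \omega\cdot y \;+\; \sum_{j\in\cS^c}\Omega_j\,|z_j|^2 \;+\; \mathrm{const} \;+\; P(\varphi,y,z,\bar z),$$
with tangential frequencies $\omega_j=j^2+V_j+\partial_{I_j}\langle P\rangle_{\varphi}$ for $j\in\cS$, normal frequencies $\Omega_j=j^2+V_j+O(\rr^2)$ for $j\in\cS^c$, and a remainder $P$ which, since $f(0)=0$, is at least cubic in $u$, hence of size $O(\rr^4)$ on a ball of radius $\rr$ in $\tw_p$ (using that $\tw_p$ is a Banach algebra, Proposition \ref{neminchia}). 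Since $V_\cS$ enters linearly, the map $V_\cS\mapsto\omega$ is close to the identity; I therefore promote $\omega\in\cQ_\cS$ (equivalently $\nu$) to the primary parameter via \eqref{piramide} and reconstruct $V_\cS\in[-1,1]^\cS$ a posteriori.

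Next I would run an iterative Newton/KAM scheme, conjugating at each step by a symplectic map $\es$ whose generator $S$ solves a homological equation of the shape
$$ \omega\cdot\partial_\varphi S + \bigl\{\textstyle\sum_{j\in\cS^c}\Omega_j\,|z_j|^2,\,S\bigr\} \;=\; -\,\Pi_{\mathrm{res}}\,P^{\leq 2},$$
where $P^{\leq 2}$ is the part of $P$ at most quadratic in $(y,z,\bar z)$ and $\Pi_{\mathrm{res}}$ extracts its non-normal component. Inverting this equation requires bounding from below the divisors $\omega\cdot k$, $\omega\cdot k\pm\Omega_j$, and $\omega\cdot k+\sigma_i\Omega_i+\sigma_j\Omega_j$ for $k\in\Z^\cS$ of finite support and $i,j\in\cS^c$, in terms of a Bourgain-type weight of the form $\g\,\dg^{-1}$, cf.\ the notation $\dg$ fixed earlier in the paper. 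A standard Newton argument then produces quadratic decay of the perturbation at the price of a controlled loss of analyticity domain, and by telescoping one obtains an analytic symplectic diffeomorphism $\Phi$ on a (possibly smaller) ball of $\tw_p$ which puts $H_V\circ\Phi$ in the desired normal form at $y=z=0$; thereon $\cT_I$ is invariant and the flow reduces to $\varphi\mapsto\varphi+\nu t$, as in Definition \ref{uccellone}.

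The heart of the argument, and its main obstacle, lies in verifying these Diophantine estimates \emph{simultaneously along the whole iteration} while keeping the excluded set of parameters small. This is where the admissibility condition on $\cS$ enters crucially: the super-polynomial growth \eqref{cicoria}, together with the super-additivity \eqref{theshow} and the iteration bound \eqref{mustgoon}, are precisely the properties that make the combinatorial factors associated to the triples $(k,i,j)$ summable against the product probability measure on $\cQ_\cS$. The first Melnikov conditions on $\omega\cdot k$ are imposed directly through $\nu$; the second Melnikov conditions, involving $\Omega_i\pm\Omega_j$, are satisfied by exploiting the independent parameters $\VSc$ to modulate the normal frequencies (Lipschitz-uniformly in $\omega$). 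Aggregating the measure exclusions over all scales of the scheme, one is left with a Cantor-like residual set $\cC\subset\cQ_\cS$ of positive product measure.

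The hardest technical point I foresee is ensuring compatibility between the finite-regularity decay imposed by $\tw_p$ and the infinite-dimensional combinatorics of the small divisors: unlike the Gevrey case of \cite{Bourgain:2005, BMP:almost}, one cannot afford to pay arbitrarily large powers of $\langle j\rangle$ to close the scheme. The admissible-sites construction is designed precisely to bypass this difficulty, since the sparsity of $\cS$ means that at each iterative step only finitely many low-lying modes contribute to the dominant part of the corrections, while the remainder in $\cS^c$ sums against the product weight in \eqref{pecoreccio}. Once $\Phi$ has been built and $V=(\VS,\VSc)$ recovered, the statement of Theorem \ref{teo kam} follows; the verification that $\cC$ indeed has positive measure reduces to Lemma-level estimates on the Diophantine set $\dg$, to be carried out in the subsequent sections.
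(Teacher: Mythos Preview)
Your outline has the right overall shape but misses or mishandles three points that are essential for this particular result.

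First, the passage to partial action--angle variables $u_j=\sqrt{I_j+y_j}\,e^{\im\varphi_j}$ on $\cS$ is exactly what the paper avoids. The set $\cI(p,\rr)$ allows $I_j=0$ (and in any case $I_j\to 0$ along the infinite set $\cS_I$), so this change of variables is singular; the whole point of the degree decomposition in Section~\ref{proiezioni} is to replace action--angle coordinates by a Taylor expansion in $(|v|^2-I,z)$ that stays analytic through $I_j=0$. The homological equation is then $L_\omega S=\{D(\omega),S\}=G^{(d)}$ on $\cH_{r,p}^{\leq 0,\cR}$, not the $(\varphi,y,z)$ equation you wrote.

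Second, your use of $\VSc$ as a free modulation parameter for the second Melnikov conditions is backwards. In the statement $\VSc$ is \emph{fixed}; the paper instead runs a counter--term (Herman/R\"ussmann) scheme: Theorem~\ref{allaMoserbis} produces $\lambda\in\ell^\infty$ with $(\Lambda+H)\circ\Psi=N$, and then one solves $\Omega_j+\lambda_j(\nu,\Omega,I)=j^2+V_j$ for $j\in\cS^c$ by contraction and \emph{defines} $V_{\cS,j}=\nu_j+\lambda_j-j^2$. The Cantor set is $\cC=\{\nu:\ (\nu,\Omega(\nu,\VSc,I))\in\dg\}$; no independent tuning of $\VSc$ occurs.

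Third, and most importantly, the convergence mechanism you describe (``loss of analyticity domain'' beaten by quadratic decay) is the classical one and does not close here. In $\tw_p$ the cost of inverting $L_\omega$ is a loss in the Sobolev exponent: Proposition~\ref{mah} gives $\|L^{-1}H\|_{r,p+\delta}\le \gamma^{-1}\exp(\exp(c\,\delta^{-1/\eta}))\|H\|_{r,p}$ for \emph{any} $\delta>0$, and this is where \eqref{cicoria}--\eqref{mustgoon} are actually used (to prove the key estimate~\eqref{barisciano} via the case analysis of Section~6). The iterative scheme then takes $\delta_n\sim n^{-(1+\eta)/2}$, which is summable precisely because $\eta>1$; the resulting divergence $\exp(\exp(n^{(1+\eta)/(2\eta)}))$ is sub--double--exponential in the iteration index and is beaten by the super--exponential gain $e^{-\chi^n}$ from Newton. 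Your sketch does not identify this trade--off, and without it the scheme does not converge in finite regularity. The remark that ``only finitely many low--lying modes contribute'' is not the mechanism; what the sparsity of $\cS$ buys is that the Diophantine weight in \eqref{diofantino nuc} involves $\langle i(s)\rangle$ rather than $\langle s\rangle$, which is what makes \eqref{barisciano} provable.
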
		
		Theorem \ref{teo kam} above contains cubes of three different sidelengths, $ [-\nicefrac14,\nicefrac14]^{\cS^c}$, $\pan_{\cS}$ and $[-1, 1]^{\cS}$. While the choice of the lengths  $\nicefrac 12,1,$ and $2$ is rather arbitrary, the fact that we need three different scales is unavoidable, see Remark \ref{brut}.
	\\
	By Theorem \ref{teo kam}, equation\eqref{NLS} has invariant tori on which the dynamics is the linear translation by $\nu t$.
	The statement  above is a typical KAM Theorem, regarding the existence of an invariant torus. The fact that we are looking for an infinite torus however introduces various new difficulties, in particular related to the regularity of the dependence on $\nu,I,\VSc$. This appears immediately when one wishes to prove that the Cantor set $\cC$
	of ``good frequencies'' is measurable (and of positive measure) with respect to the product probability
	measure ${\rm meas}_{\cQ_\cS}$ on $\pan_\cS$. The $\sigma$-algebra 
	of such measure, which is the natural one in this context, is given by the Borel sets of the product topology,
	which is coarser than  the one induced by the $\ell^\infty$-metric.
	Then a crucial point is that a function $f:\pan_\cS\to \R$ which is  Lipschitz (with respect to the 
	$\ell^\infty$-metric) might   be non continuous with respect to the product topology and, hence, 
	non measurable. 
	 As typical in KAM schemes  $\cC$ is defined as the intersection of sets of the form 
	 $\{|f(\nu)|>\alpha>0\}$ (see \eqref{unbraccio} and \eqref{labirinto}) 
	 and $f:\pan_\cS\to \R$ is a Lipschitz function.
	 As explained above this does not assure 
	 measurability\footnote{This problem does not appear if $f$ actually depends only on a 
	 finite number of variables as in the case of maximal tori, where
	 $f(\nu)=\nu\cdot \ell$, with $|\ell|<\infty.$ }.
	 \\
	So we reformulate our theorem in a more technical way, carefully keeping track of the regularity w.r.t. all the parameters. To avoid working with functions defined only on a Cantor set, we 
	suitably (see Lemma \ref{godiva}) extend all the functions so that they are defined for $\nu\in \pan_\cS$.   To summarize, the parameter dependences we need to control are:
	\\
	- continuity w.r.t. the product topology for measure estimates both in $\nu$ and in $V$;\\
	- Lipschitz dependence (w.r.t. $\ell^\infty$-metric) for implicit function 
	 theorems /contractions/extensions
	(see Lemma \ref{zagana}).

The smallness condition  (i.e. how close are our solutions to zero) is tied to the size of  the non-linearity and to the regularity of the solution we are looking for. Recall that, since $f$ is analytic, for some $\mathtt R>0$ we have
\begin{equation}\label{analitico}
	f(y)= 
	\sum_{d=1}^\infty f^{(d)} y^d\,,\quad\qquad
	|f|_{\mathtt R}:=\sum_{d=1}^\infty|f^{(d)}|\mathtt R^d <\infty \,.
\end{equation}


\begin{them}\label{toro Sobolevp}
	Let $p_*>1$ and  $0<\g\leq \min\{\nicefrac14,\abs{f}_{\mathtt R}\}$.
	  There exists $\e_*=\e_*(p_*)>0$ and $C=C(p_*)>1$ such that,
	for all $\rr>0$
	satisfying 
	\begin{equation}
		\label{cornettonep}
		\e:=\frac{\abs{f}_{\mathtt R}}{\g \mathtt R} \rr^2 \le \e_*
			\end{equation}
	and for every 
	   \begin{equation}\label{diavola}
	   \frac{p_*+1}{2}\leq p\leq p_*
	   \end{equation}
	   the following holds.
 There exist:
 \\
 i) a  map 
 {
 \begin{equation}\label{labello}
 \begin{aligned}
 \VSf:\; &\cQ_{\cS}\times
 [-\nicefrac14,\nicefrac14]^{\cS^c}\times
  \mathcal I(p,\rr) \to  [-1,1]^{\cS},\\ & (\nu, \VSc, I)\mapsto \VSf(\nu,\VSc,I)
 \end{aligned}
 \end{equation} }
   which is continuous  in $\nu,\VSc$ w.r.t. the product topology and Lipschitz  in all its variables w.r.t.  the $\ell^\infty$ metric.
In particular $\VSf$ is  Lipschitz  $C\e\g$-close w.r.t. $\nu$ to the map
  $\VSs$; 
  	\\
ii)	a map {
	$$
\begin{aligned}	
	\Phi: \quad & {B}_{3\rr}\pa{\tw_{p}}\times\pan_\cS\times
	 [-\nicefrac14,\nicefrac14]^{\cS^c}\times
	 \mathcal I(p,\rr)  \longrightarrow
	{B}_{4\rr}\pa{\tw_{p}}\,,\,		\\
	 &  (u;\nu,\VSc,I) \quad \mapsto  \quad\Phi(u;\nu,\VSc,I)
	\end{aligned}
	$$}
	which is Lipschitz  in all its variables and $\rr/16$-close to the identity w.r.t.\footnote{Namely
$|\Phi(u;\nu,\VSc,I)-u|_p\leq \rr/16$ on 
${B}_{3\rr}\pa{\tw_{p}}\times\pan_\cS\times
	 [-\nicefrac14,\nicefrac14]^{\cS^c}\times
	 \mathcal I(p,\rr)$.	} $u$; \\
iii)
 a Cantor-like Borel set $\cC=\cC(\VSc,I,\gamma)\subset \cQ_{\cS}$,
	with\footnote{By ${\rm meas}_{\cQ_\cS}$ and ${\rm meas}_{[-\nicefrac14,\nicefrac14]^\cS}$
	we denote the  product probability  measures on $\cQ_\cS$ and $[-\nicefrac14,\nicefrac14]^\cS$,
	respectively. For brevity we write ${\rm meas}_{\cQ_\cS}(A)$ for
	${\rm meas}_{\cQ_\cS}(A\cap \cQ_\cS)$ and similarly for ${\rm meas}_{[-\nicefrac14,\nicefrac14]^\cS}$.} 
	\begin{equation}\label{mirto}
	\begin{aligned}
	&{\rm meas}_{\cQ_\cS}(\cQ_\cS \setminus \cC(\VSc,I,\g))
	 \le C_0\g\,,\qquad\\
	&{\rm meas}_{[-\nicefrac14,\nicefrac14]^\cS} \Big(\VSf\big(\cQ_\cS \setminus \cC(\VSc,I,\g),\VSc,I\big)\Big) \le C_0\g\,,
	\end{aligned}
	\end{equation} 
	for a suitable absolute constant $C_0>0$.
	\\
	 Moreover for any $I\in \mathcal I(p,\rr)$, $\VSc\in [-\nicefrac14,\nicefrac14]^{\cS^c}$ and 
	  $\nu\in \cC(\VSc,I,\gamma)$, 
	  the map $\Phi(\cdot;\nu,\VSc,I)$
		is an  analytic symplectic change of variables.		\\
	Finally, for  $\nu\in \cC(\VSc,I,\gamma)$, $\cT_I$ defined in \eqref{pippero}
		is { a KAM torus} of frequency $\nu$ for
		$
		H_{V} \circ \Phi$ with $V= (\VSf(\nu,\VSc,I),\VSc)$. 		
\end{them}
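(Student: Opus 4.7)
The strategy is a Herman-style modified KAM scheme in the spirit of \cite{BMP:almost}, carried out in the finite-regularity Banach space $\tw_p$. Writing $u_j=\sqrt{I_j+y_j}\,e^{\im\theta_j}$ on the active tangential sites $j\in\cS_I$ (see \eqref{shula}) and keeping $u_j$ as a normal variable on the complement, one recasts the problem as constructing, for suitable $\nu,\VSc,I$, a symplectic diffeomorphism $\Phi$ and a correction $\VSf$ so that $H_V\circ\Phi$ (with $V=(\VSf,\VSc)$) is in a normal form whose Hamiltonian vector field vanishes on the torus $\{y=0,\ u_{\Z\setminus\cS_I}=0\}$ and generates the linear flow $\dot\theta=\nu$ there. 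This is equivalent to the invariance of $\cT_I$ with prescribed frequency $\nu$.

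I would then run a quadratic Newton iteration. At step $n$ the Hamiltonian has the form
\[
H_n = \nu\cdot y + \sum_{j\in\Z\setminus\cS_I}\Omega_j^{(n)}\abs{u_j}^2 + R_n,
\]
and one seeks a symplectic generator $S_n$ (producing $\Phi_n=e^{\{S_n,\cdot\}}$) together with a counter-term $\delta V^{(n)}$ that kill the terms of $R_n$ obstructing invariance of the torus, namely those of degree at most two in the normal variables. Solving the associated homological equation produces divisors of the form $\nu\cdot k + \sigma_1\Omega^{(n)}_{j_1}+\sigma_2\Omega^{(n)}_{j_2}$ with $k\in\Z^{\cS_I}$ finitely supported, $\sigma_i\in\{0,\pm1\}$ and $j_i\in\Z\setminus\cS_I$; the counter-term $\delta V^{(n)}$ absorbs the resonant mean in $y$ à la Herman--Moser, allowing the internal frequency to be kept equal to $\nu$ throughout. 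Under the smallness condition \eqref{cornettonep}, quadratic smallness of $R_n$ combined with geometrically controlled analyticity losses yields convergence of $\Phi=\lim\Phi_1\circ\cdots\circ\Phi_n$, with $\Phi$ being $\rr/16$-close to the identity.

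The arithmetic heart of the argument is Definition \ref{orey}: because $\cS$ grows at least like $e^{(\log i)^{1+\eta}}$ with $\eta>1$ and satisfies the sub-additivity and super-multiplicativity conditions \eqref{theshow}--\eqref{mustgoon}, one can impose at each step Bourgain-type Diophantine lower bounds on the linear combinations $\nu\cdot k+\sigma_1\Omega_{j_1}^{(n)}+\sigma_2\Omega_{j_2}^{(n)}$ with weights encoding the sparsity of $\cS$, and such that the excised sets have product measure $O(\gamma)$ on $\cQ_\cS$. Intersecting all these cuts along the iteration defines the Borel set $\cC(\VSc,I,\gamma)$ and yields the first estimate in \eqref{mirto}. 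The second estimate there follows because $\VSf$ is Lipschitz $C\e\gamma$-close to the measure-preserving bijection $\VSs$ of \eqref{piramide}, so it pushes the product measure forward with controlled distortion.

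The main obstacle is precisely the coexistence of two incompatible topological requirements on $\cQ_\cS$: for the contraction and implicit-function arguments producing $\VSf$, and for the Whitney-type extension (Lemma \ref{godiva}) that frees $\nu$ from being confined to $\cC$, one needs Lipschitz continuity in the $\ell^\infty$ metric; whereas measurability for the product probability measure requires Borel regularity in the coarser product topology. I would handle this by showing at each KAM step that the corrections $S_n,\Omega^{(n)},\delta V^{(n)}$ depend on only finitely many coordinates of $(\nu,\VSc)$ up to exponentially small tails controlled by the weighted norms adapted to the tangential sites; this furnishes continuity with respect to the product topology, which passes to the limit and yields the joint regularity of $\Phi$ and $\VSf$ demanded by (i)--(iii).
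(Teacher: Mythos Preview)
Your overall architecture is correct and matches the paper: a Herman--Moser counter-term KAM scheme, with a quadratic Newton iteration producing generators $S_n$ and counter-terms, followed by elimination of the counter-terms via a contraction argument to define $\VSf$, and finally measure estimates for $\cC$. The measurability discussion (finite-variable approximation for product-topology continuity, Lipschitz extension in $\ell^\infty$) is also essentially what the paper does via the spaces $\cF(\cO\times\cI)$ of Definition~\ref{lamai} and Lemma~\ref{godiva}.

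There are however two points where your sketch either diverges from the paper or leaves a genuine gap.

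\textbf{Action--angle variables.} You pass to $u_j=\sqrt{I_j+y_j}\,e^{\im\theta_j}$ on $\cS_I$. In the infinite-dimensional setting this is problematic: since $I_j\to 0$ along $\cS_I$, the change of variables becomes increasingly singular and one loses uniform control of the symplectic maps in $\tw_p$. The paper explicitly avoids this (see the opening of Section~\ref{proiezioni}) and instead works directly in the $u$-variables, introducing the \emph{degree decomposition} $H=\sum_d H^{(d)}$ of \eqref{grado vero}, which is a Taylor expansion in $(|v|^2-I,z)$ that captures the ``order of vanishing at $\cT_I$'' without any singularity at $I=0$. The homological equation is then \eqref{homo sapiens}, solved on the range components $\cH^{\le 0,\cR}_{r,p}$ via Proposition~\ref{mah}, and the counter-term lives in $\cH^{0,\cK}$. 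Your scheme can be repaired by adopting this decomposition, but as written the action--angle step would not go through uniformly.

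\textbf{The small-divisor / regularity-loss mechanism.} You write that sparsity of $\cS$ allows ``Bourgain-type Diophantine lower bounds'' and ``geometrically controlled analyticity losses'', but this is exactly the place where the argument is delicate and where a sketch is not enough. In Sobolev regularity the homological-equation estimate typically forces a \emph{fixed} lower bound on the regularity loss per step, which is fatal for convergence. The paper's key technical point is that, thanks to the admissibility conditions \eqref{cicoria}--\eqref{mustgoon} on $\cS$, one can impose the strengthened Diophantine condition \eqref{diofantino nuc} with weights $\langle i(s)\rangle$ (the \emph{index} in $\cS$, not $s$ itself) and then prove (Proposition~\ref{mah}, estimate~\eqref{ss148}) that $L_\omega^{-1}$ maps $\cH^{\le 0,\cR}_{r,p}\to\cH^{\le 0,\cR}_{r,p+\delta}$ with norm $\gamma^{-1}\exp(\exp(c\,\delta^{-1/\eta}))$ for \emph{every} $\delta>0$. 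One then chooses $\delta_n\sim n^{-(1+\eta)/2}$ summable, and since $(1+\eta)/(2\eta)<1$ the resulting divergence $\exp(\exp(c\,n^{(1+\eta)/(2\eta)}))$ is beaten by the quadratic gain $\exp(-\chi^n)$ of the Newton scheme (Section~\ref{AAA}). Without identifying this mechanism---in particular the interplay between the growth exponent $\eta>1$ in \eqref{cicoria} and the double-exponential in \eqref{ss148}---the claim of convergence is unsupported; this is the core novelty separating the present result from the Gevrey case of \cite{Bourgain:2005,BMP:almost}.
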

	\begin{rmk}\label{brut}
In 	Theorem \ref{toro Sobolevp} above  the map $\VSf$ has oscillations of order $C\e\gamma$ so the target cube could be $[-\nicefrac12-C\e\g, \nicefrac12+C\e\g]$, but in any case must have side-length greater than $1$. On the other hand, in order to achieve the second estimate in \eqref{mirto}, we need that the sidelength of $\pan_{\cS}$ is greater than $\nicefrac{1}{2}$. This is the reason for choosing three distinct sidelengths (see comments after Theorem \ref{teo kam}).
\end{rmk}
Note that the parameter $\gamma$ comes from a Diophantine condition, see Definition \ref{diomichela}, and consequently controls the measure estimates \eqref{mirto}. Of course, this estimate is meaningful only for small $\gamma$.\\
We now  reformulate our result  in terms of the Fourier multiplier $V$
(for the proof see Section \ref{bellogrosso}). 
\begin{cor}\label{ga}
For $I\in \cI(p,\rr)$, with $p,\rr$ as in the previous Theorem, let
	\begin{equation*}
		\cG(I,\g):= \set{ V\equiv(\VS, \VSc)\in [-\nicefrac14,\nicefrac14]^\Z\;: \quad \VS\in \VSf(\cC(\VSc,I,\g),\VSc,I) }\,.
	\end{equation*}
 $\cG(I,\g)$ is a Borel set in $[-\nicefrac14,\nicefrac14]^\Z$ with measure greater than $1- C_0\g$ ($C_0$ is the constant in \eqref{mirto}).
\end{cor}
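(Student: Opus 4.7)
The plan is to establish two assertions: that $\cG(I,\g)$ is Borel in $[-\nicefrac14,\nicefrac14]^\Z$ and that its product probability measure is at least $1-C_0\g$.

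For the measure bound I would invoke Fubini, splitting the product measure as the product of the measures on $[-\nicefrac14,\nicefrac14]^\cS$ and $[-\nicefrac14,\nicefrac14]^{\cS^c}$ to get
\[
{\rm meas}_{[-\nicefrac14,\nicefrac14]^\Z}(\cG(I,\g)) = \int_{[-\nicefrac14,\nicefrac14]^{\cS^c}} {\rm meas}_{[-\nicefrac14,\nicefrac14]^\cS}\!\Big(\VSf(\cC(\VSc,I,\g),\VSc,I)\cap[-\nicefrac14,\nicefrac14]^\cS\Big)\,d{\rm meas}(\VSc).
\]
Since Theorem \ref{toro Sobolevp} asserts that $\VSf(\cdot,\VSc,I)$ is Lipschitz $C\e\g$-close to the affine bijection $\VSs:\cQ_\cS\to[-\nicefrac12,\nicefrac12]^\cS$, up to shrinking $\e_*$ one has $\VSf(\cQ_\cS,\VSc,I)\supset[-\nicefrac14,\nicefrac14]^\cS$. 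Hence
\[
[-\nicefrac14,\nicefrac14]^\cS\setminus\VSf(\cC(\VSc,I,\g),\VSc,I) \subset \VSf(\cQ_\cS\setminus\cC(\VSc,I,\g),\VSc,I),
\]
and the second bound in \eqref{mirto} gives the right-hand side measure at most $C_0\g$, uniformly in $\VSc$. Integration in $\VSc$ yields the desired measure bound.

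The main obstacle is the Borel measurability, because $\cC(\VSc,I,\g)$ is Borel only with respect to the product topology of $\cQ_\cS$, whereas injectivity of $\VSf(\cdot,\VSc,I)$ comes from $\ell^\infty$-Lipschitz bounds. I would bridge these topologies through the Lusin--Souslin theorem. For each fixed $\VSc$, writing $\VSf(\cdot,\VSc,I)=\VSs(\cdot)+g(\cdot,\VSc,I)$ with $g$ having $\ell^\infty$-Lipschitz constant $\ll 1$, one sees that $\VSf(\cdot,\VSc,I)$ is bi-Lipschitz, hence injective. Combined with its product-topology continuity, this is a continuous injection between the Polish spaces $\cQ_\cS$ and $[-1,1]^\cS$, so Lusin--Souslin implies that $\VSf(\cC(\VSc,I,\g),\VSc,I)$ is Borel in $[-1,1]^\cS$.

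For the joint Borel measurability in $V=(\VS,\VSc)$, I would introduce the map
\[
\Psi:\cQ_\cS\times[-\nicefrac14,\nicefrac14]^{\cS^c}\to[-1,1]^\cS\times[-\nicefrac14,\nicefrac14]^{\cS^c},\qquad \Psi(\nu,\VSc):=(\VSf(\nu,\VSc,I),\,\VSc),
\]
which is product-topology continuous and, by the previous paragraph, injective. The set $\{(\nu,\VSc):\nu\in\cC(\VSc,I,\g)\}$ is Borel in the product topology, since the Diophantine conditions defining $\cC$ (cf.\ \eqref{unbraccio}--\eqref{labirinto}) can be exhibited as countable intersections of closed sets depending jointly continuously on $(\nu,\VSc)$. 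A second application of Lusin--Souslin then shows that $\cG(I,\g)=\Psi(\{\nu\in\cC(\VSc,I,\g)\})\cap[-\nicefrac14,\nicefrac14]^\Z$ is Borel in $[-\nicefrac14,\nicefrac14]^\Z$, concluding the proof.
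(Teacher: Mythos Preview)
Your argument is correct, but for the Borel-measurability step you take a genuinely different route from the paper.

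For the measure estimate you proceed exactly as the paper does (Fubini plus the second inequality in \eqref{mirto}, together with the surjectivity $\VSf(\cQ_\cS,\VSc,I)\supset[-\nicefrac14,\nicefrac14]^\cS$ and injectivity of $\VSf(\cdot,\VSc,I)$), so there is nothing to add there.

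For the Borel property the paper avoids Lusin--Souslin altogether. It exploits the explicit inverse $g$ of $\VSf(\cdot,\VSc,I)$, already constructed by contraction in \eqref{tanfo}--\eqref{appesta} and continuous in the product topology by Lemma~\ref{zagana}. Setting $h(V)=(q+\VS+g(\VS,\VSc,I),\VSc)$, one writes the complement of $\cG(I,\g)$ as $\cB(I,\g)=\bigcup_{\ell\in A}\cB_\ell$ where $\cB_\ell=\{V:|\omega(h(V))\cdot\ell|\le\gamma\gatta(\ell)\}$. Each $\cB_\ell$ is the preimage under the \emph{continuous} map $V\mapsto\omega(h(V))\cdot\ell$ of a closed interval, hence closed; the countable union is then Borel. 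Your approach instead packages injectivity (from the $\ell^\infty$-Lipschitz bounds) and product-topology continuity of $\Psi$ into a single application of Lusin--Souslin, which spares you the explicit inversion but imports a nontrivial result from descriptive set theory. The paper's method is more elementary and yields a concrete $F_\sigma$ description of the bad set; yours is shorter and would apply even if the inverse were not so readily available.
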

\noindent
Therefore for all $I\in \cI(p,\rr)$ and all  $V\in  \cG(I,\g)$  the equation \eqref{NLS} has  an invariant torus\footnote{Of course, in the linear case one has $\Phi = \id, \mathscr{V}_\cS = \mathscr{V}^*_\cS,  \cC = \cQ_\cS, \cG = [-\frac14, \frac14]^\Z$.}.
%

\noindent
\textbf{Invariant tori and regularity of our
almost periodic solutions.}
The flat torus $\cT_I$ in the original variables is realized
by the analytic immersion 
$\varphi\mapsto\Phi(\mathfrak i(\varphi);\nu,\VSc,I )$
with 
 $\mathfrak i$
defined in 
	\eqref{manodedios}.
By construction the NLS dynamics on the torus $\cT_I$ is $\varphi \mapsto \varphi+ \nu t$ so our candidate for an almost periodic solution is $\ur(t,\cdot):= \Phi(\mathfrak i(\nu t);\nu,\VSc, I )$ as discussed in  Section \ref{prove}. 
We wish to stress that analyticity of $\Phi(\mathfrak i(\varphi))$ in the angles does not imply analyticity in time,
	since the map $ t\mapsto \nu t\in \T^\cS$ is not even continuous (endowing $\T^\cS$ with the $\ell^\infty$-topology). 
	On the other hand, since $\Phi$ is analytic,
	{\it the regularity issues are the same as 
	in the linear case}, 
	recall Subsection \ref{armando}. In particular,   for $p>3$ our solutions are classical, while for $p\le 2$ we can construct also merely weak solutions, see the proof of Theorem \ref{deltoide}.
	
\noindent
\textbf{The Cantor set $\cC$}.
We can be rather explicit in our description of the set $\cC$ of Theorem \ref{toro Sobolevp}. We start by 
fixing the hypercube 
\begin{equation}
\label{pan}
\pan := \set{\omega = (\omega_j)_{j}\in \R^\Z\, : \, \abs{\omega_j- j^2} < \frac12}\, , \qquad \cQ=\cQ_{\cS}\times \cQ_{\cS^c}\,,
\end{equation}
endowed with the product topology,
and by introducing the following closed set
\begin{defn}[Diophantine condition] \label{diomichela} Let $\g>0$.  We say that a vector $\omega\in\pan$ belongs to ${\mathtt D}_{\g,\cS}$ if it satisfies\footnote{Instead of $3/2$
one can put every exponent $\tau>1$.}
\begin{equation}\label{diofantino nuc}
\begin{aligned}
&|\omega\cdot \ell| \ge  
	\gamma \prod_{s\in \cS}\frac{1}{(1+|\ell_s|^2 \langle i(s)\rangle^2)^{3/2}}, \quad \\
&	 \forall  \ell : 0<|\ell|<\infty \,,
	\sum_{j\in \cS^c}|\ell_j|\le 2 \,,\quad \pi(\ell)= \fm(\ell)=0\,,
\end{aligned}
	\end{equation}
	where\footnote{As usual for integer vector $\ell\in\Z^\Z$ we set $|\ell|=\sum_{j\in\Z}|\ell_j|$.} $i(s)$ is the inverse function of $s(i)$, $\pi(\ell) :=\sum_{j\in \Z} j\ell_j$ is the ``momentum'' and $\fm(\ell):=\sum_{j\in \Z} \ell_j$ is the ``mass''. 
\end{defn}
\begin{them}\label{torobolev}
	Under the hypotheses of Theorem \ref{toro Sobolevp},
	there exists a Lipschitz map
	$\bO: \cQ_\cS\times [-\nicefrac14,\nicefrac14]^{\cS^c} \times \mathcal I(p,\rr) \to \cQ_{\cS^c}$
	which is continuous with respect to the product topology on 
	$\cQ_\cS \times [-\nicefrac14,\nicefrac14]^{\cS^c}$
	and\footnote{The constant $C$ is the one of Theorem \ref{toro Sobolevp}.}
	for every $j\in \cS^c$ satisfies
	\begin{equation}
	\label{omegone}
	 |{\bO_j(\nu, \VSc, I)-j^2-V_j}| 
	\le C \g \e
	\end{equation}
	and the Lipschitz estimates
	\begin{equation}\label{batacchio}
	\begin{aligned}
& \sup_{\nu'\neq\nu}\frac{|\bO_j(\nu, \VSc, I)-\bO_j(\nu', \VSc, I)|}{|\nu-\nu'|_\infty}
	\le C  \e\,,\qquad\\
	&
\sup_{I'\neq I}\frac{|\bO_j(\nu, \VSc, I)-\bO_j(\nu, \VSc, I')|}{ \abs{I - I'}_{2p}}\leq
	 C      \g \rr^{-2}\e\,.
	 	\end{aligned}
\end{equation}	
	Moreover we can choose in Theorem \ref{toro Sobolevp}
	\begin{equation}\label{arancini}
	\begin{aligned}
	&\cC(\VSc,I,\gamma):= \{ \nu\in \cQ_{\cS}: \,\omega(\nu, \VSc, I)\in {\mathtt D}_{\g,\cS} \}\,, \quad\\
	&\mbox{where } \omega(\nu, \VSc, I):=(\nu,\Omega(\nu, \VSc, I))\,.
	\end{aligned}
	 \end{equation}
	 In this way, the torus $\cT_I$ defined in \eqref{pippero} is an
	   elliptic invariant torus in the sense that its linearized  dynamics in the ``normal'' directions 
	    is  $\dot{u}_j=\im\Omega_j u_j$ for $ j\in\cS^c$. 
\end{them}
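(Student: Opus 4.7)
The plan is to read off the normal frequencies $\Omega$ from the KAM construction underlying Theorem \ref{toro Sobolevp}. That construction produces a sequence of symplectic changes of variables $\Phi^{(n)}$ so that $H_V \circ \Phi^{(n)}$, in a shrinking neighbourhood of $\cT_I$, takes the normal form
\begin{equation*}
e_n + \nu\cdot\Big(\sum_{j\in\cS}(|u_j|^2 - I_j)\Big) + \sum_{j\in\cS^c}\Omega_j^{(n)}|u_j|^2 + R^{(n)},
\end{equation*}
with $R^{(n)}$ small and vanishing to high order on $\cT_I$. I would set $\Omega_j^{(0)} := j^2 + V_j$ and $\Omega_j := \lim_n \Omega_j^{(n)}$: the correction $\Omega_j^{(n+1)} - \Omega_j^{(n)}$ is precisely the diagonal coefficient of $|u_j|^2$ in the normalised quadratic part of the perturbation at step $n$. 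By the quadratic (super-exponential) smallness of the KAM remainders, these corrections form a geometric series of size $O(\gamma\varepsilon)$, yielding \eqref{omegone}.

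Next I would establish the Lipschitz estimates \eqref{batacchio} by tracking the dependence of each $\Omega_j^{(n)}$ on $(\nu,\VSc,I)$ through the iteration. The homological equation at step $n$ inverts divisors $\nu\cdot\ell + \sigma_i\Omega_i^{(n)} + \sigma_j\Omega_j^{(n)}$, controlled by the Diophantine/Melnikov condition imposed at that step; together with tame estimates on the generating function, a telescoping argument bounds the Lipschitz constants of $\Omega_j^{(n)} - j^2 - V_j$ by a geometric series in $\varepsilon$. The extra factor $\rr^{-2}$ in the $I$-Lipschitz bound is due to the fact that the perturbation is quadratic-in-$u$ with amplitude $\rr^2$, so differentiating in $I$ (which scales like $\rr^2$) produces that factor.

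The main obstacle is continuity of $\Omega$ with respect to the product topology, since a generic Lipschitz function on $\ell^\infty$ is not product-continuous. The key structural fact to exploit is that \eqref{NLS} has both the gauge ($\ur\mapsto e^{i\theta}\ur$) and translation ($\ur(x)\mapsto\ur(x+c)$) symmetries, forcing every monomial appearing in the KAM scheme to satisfy mass conservation $\fm(\ell)=0$ and momentum conservation $\pi(\ell)=0$. Thanks to admissibility of $\cS$ (Definition \ref{orey}), these constraints, combined with the restriction $\sum_{j\in\cS^c}|\ell_j|\le 2$ governing the quadratic normal form, confine the Fourier modes participating in each homological equation to a \emph{finite} set depending on $j$ and $n$. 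Hence each $\Omega_j^{(n)}$ depends on only finitely many coordinates $\nu_k,V_k$ and is trivially product-continuous; uniform convergence in $n$ on the parameter set then passes this continuity to the limit $\Omega$.

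Finally, with $\Omega$ in hand, I would identify \eqref{arancini} as a valid choice for the Cantor set of Theorem \ref{toro Sobolevp}. The Diophantine condition \eqref{diofantino nuc} is exactly what is needed to invert all the small divisors encountered at every KAM step (first and second Melnikov conditions, the restriction $\sum_{j\in\cS^c}|\ell_j|\le 2$ reflecting the linear and quadratic normal contributions). The measure estimate \eqref{mirto} then reduces to a measure estimate on ${\mathtt D}_{\g,\cS}\subset\cQ_\cS$ pulled back by $\nu\mapsto\omega(\nu,\VSc,I)$, which is an $O(\varepsilon)$-Lipschitz perturbation of $\nu\mapsto(\nu, j^2+V_j)$; admissibility of $\cS$ controls the infinite product in \eqref{diofantino nuc} and gives the required bound. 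Ellipticity of $\cT_I$ then follows directly from the normal form: on $\cT_I$ the quadratic part of $H_V\circ\Phi$ in the normal directions is exactly $\sum_{j\in\cS^c}\Omega_j|u_j|^2$, giving $\dot u_j = i\Omega_j u_j$ for $j\in\cS^c$.
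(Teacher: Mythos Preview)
Your outline is a reasonable sketch of a direct KAM scheme, but it differs substantially from the paper's route and contains a genuine gap in the product-continuity argument.

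\textbf{The paper's approach is different.} The paper does not track updated normal frequencies $\Omega_j^{(n)}$ through a KAM iteration with step-dependent Melnikov conditions. Instead it runs a \emph{counter-term} (Herman--Moser) scheme, Theorem~\ref{allaMoserbis}: the full frequency vector $\omega\in\dg$ is held fixed throughout the iteration, and one constructs counter-terms $\lambda_j(\omega,I)\in\cF(\cO\times\cI)$ with $\|\lambda\|_\infty\le \bar C\gamma(1+\Theta)^2\epsilon$ such that $(\Lambda+H)\circ\Psi$ is in normal form. Only afterwards is $\Omega$ obtained by solving the scalar fixed-point equation $\Omega_j+\lambda_j(\nu,\Omega,I)=j^2+V_j$ (see \eqref{equaOme}) via the Contraction Lemma~\ref{zagana}, which is set up precisely so that the fixed point lives in the space of functions continuous for the product topology. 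The estimates \eqref{omegone}--\eqref{batacchio} then follow immediately from \eqref{loredana}, \eqref{berte} and contraction; there is no telescoping over KAM steps.

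\textbf{The gap.} Your claim that mass/momentum conservation together with $\sum_{j\in\cS^c}|\ell_j|\le 2$ and admissibility of $\cS$ ``confine the Fourier modes participating in each homological equation to a finite set depending on $j$ and $n$'' is false. For any fixed $j\in\cS^c$ there are infinitely many $\ell$ satisfying $\fm(\ell)=\pi(\ell)=0$ and supported on $\cS\cup\{j\}$, since the tangential components $\ell_\cS$ are unconstrained in length. Consequently the diagonal coefficient of $|u_j|^2$ in the perturbation at step $n$ is an infinite sum, each summand depending on a different finite set of $\omega_k$'s, so $\Omega_j^{(n)}$ genuinely depends on infinitely many coordinates. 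Lipschitz in $\ell^\infty$ does not by itself give product-continuity, as the paper explicitly warns after Theorem~\ref{teo kam}. The paper circumvents this by working from the start in the closure $\cF(\cO\times\cI)$ of finitely-dependent functions (Definition~\ref{lamai}), proving that this space is preserved by $L_\omega^{-1}$, Poisson brackets, and the KAM step, and then using the Lipschitz extension Lemma~\ref{godiva} and the contraction Lemma~\ref{zagana}, both of which are tailored to $\cF$. Without that functional-analytic setup your product-continuity argument does not go through.
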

\begin{rmk}\label{tana}
	Note that \eqref{diofantino nuc} is a much stronger Diophantine condition that the one proposed in \cite{Bourgain:2005} (or \cite{BMP:almost}), where the denominators were of the form $1+|\ell_j|^2 j^2$. Of course the reasons why we can impose such strong diophantine conditions, still 
	obtaining a positive measure set, are the structure of the set $\cS$ and the fact that we only need to consider denominators with $\sum_{j\in \cS^c} |\ell_j|\le 2$. 
\end{rmk}
\subsection{Plan of the paper, strategy and main novelties}
Deducing  Theorem \ref{deltoide} from Theorems \ref{toro Sobolevp} and \ref{torobolev} is a self contained argument, {which we present in section \ref{prove}. The proof  is developed as follows:  $\VSc$ and
$I\in \mathcal I(p,\rr)$ being fixed, we first construct a set $\cC'(\VSc,I,\g)\subseteq \cQ_\cS$ of large relative measure such that for all $\nu\in \cC'(\VSc,I,\g)$  the map 
$t\mapsto \ur(t,\cdot):= \Phi(\mathfrak i(\nu t);\nu,\VSc, I )
\in \tw_p$ is almost-periodic in 
$\tw_{p'}$ with $1<p'<p$
since it is uniform limit of  quasi periodic functions
just as in the linear case (see Subsection \ref{armando}). Such  approximating functions are in fact classical solutions of  the approximate equation $($NLS$_{V_n})$   with $V_n = V_n(\nu)\to V(\nu)$.  Secondly we show that $\ur$ is a weak solution in the sense of \eqref{ranefritte},  for such $V = V(\nu)$.  
We next reformulate our result in terms of the external parameters $V$ instead of the frequencies $\nu$,  namely we prove that for every $V$ in a large measure set $\mathcal G(I,\gamma)\subseteq [-\nicefrac14,\nicefrac14]^\Z $ we can solve \eqref{ranefritte}. A main point is to show that $\mathcal G(I,\gamma)$ is measurable. Next, taking the union over $\g$ we obtain a full measure set, thus showing
 that for a.e. $V$ there exists at least one solution. Finally moving $\cS$
 and suitably choosing $I$ we produce, for a.e. $V$, countably many different  solutions. }
 \smallskip
 
 In order to prove Theorems \ref{toro Sobolevp} and \ref{torobolev} (see end of Section \ref{cinque}) we follow
 the general strategy of  \cite{BMP:almost}. 
{As recalled in Section \ref{cinque} this consists in three main steps:  the introduction of a suitable functional setting and degree decomposition, a formulation in terms of a counter-term theorem, solving a Homological equation, and proving  the fast convergence of a KAM scheme.  } 
Regarding the first issue, the main additional difficulty in the present setting is to keep track of the regularity w.r.t. all the parameters. To this purpose
 in section \ref{tre}
 we  start by defining  the Poisson algebra of ``regular Hamiltonians''
(basically a space of normally analytic functions $H$ such that the corresponding Hamiltonian vector field $X_H$  is a normally analytic 
map
$B_{r}(\tw_p)\to \tw_p$) and further introduce the space of parameter--depending
regular Hamiltonians (see definitions \ref{lamai} and \ref{ala}). 
{In particular, we set our ambient space as the closure, w.r.t. a suitable norm (see \eqref{solenoide} together with \eqref{entusiasmo}-\eqref{gorma}), of functions depending only on a finite number of frequencies. A Lipschitz extension result is given in Lemma \ref{godiva}.}

{Theorems \ref{toro Sobolevp}-\ref{torobolev} will be derived in two steps,  through a technique known as ``elimination of parameters",  firstly introduced by R\"ussmann and Herman in the 80's.  This strategy consists in introducing some extra parameters (i.e.  the counter-terms $\lambda_j$) in the perturbed Hamiltonian, in order to  compensate its degeneracies (absence of twist properties for instance) and conjugate the modified Hamiltonian to one that admits the desired invariant KAM torus.  This first normal form step contains the hard analysis. Then,  we prove that we can \textit{eliminate} the counter-terms  using the potential,  by  means of the classical implicit function theorem in Banach spaces.  \\ Hence,  in our case,  we first put $H_V$ in \eqref{Hamilto0} in a suitable normal form with counter-terms in the spirit of Herman  through Theorem \ref{allaMoserbis},  {which contains the main KAM difficulties in dealing with Sobolev regularity. }  }

Roughly speaking,  this normal form theorem states that,  under appropriate smallness conditions,  for all $\omega\in \dgp$,  $\VSc\in [-\nicefrac14,\nicefrac14]^{\cS^c}$ and $I\in \cI(p,r)$ there exist $\lambda\in \ell^\infty$ and a symplectic  change of variables $\Psi$
(see \eqref{pocafede})  such that
\[
( \sum_{j\in\Z} (\omega_j+\lambda _j) |u_j|^2+P)\circ\Psi = \sum_{j\in\Z} \omega_j |u_j|^2+ O(|u|^2 -I)^2,
\]
where  $R= O(|u|^2 -I)^2$ means that $R$ is a regular Hamiltonian which has a zero of order at least 2 at the torus $\cT_I$ defined in \eqref{pippero} (we formalize this definition in Section \ref{proiezioni},  introducing an appropriate degree decomposition).  In order to deduce Theorems \ref{toro Sobolevp}-\ref{torobolev} we need to eliminate the counter terms (see equations in \eqref{equaOme}): the main point is that we can solve with respect only to the tangential variables $V_{\cS} \leftrightsquigarrow \nu$.

Coming back to the counter-term Theorem \ref{allaMoserbis},  let us explain the main new issues related to Sobolev regularity.\\
As is habitual,  the map $\Psi$ is constructed as the composition of a sequence of changes of coordinates  whose generating function $S$ at every step   is determined by solving a Homological equation 
\begin{equation}
\label{homolu}
L_\omega S:=\{\sum_j \omega_j|u_j|^2, S\} = F\,,
\end{equation}
where  $F$ is a given analytic function on the phase space $\tw_p$ (see definition \ref{ala}) which is at most quadratic in the normal variables ($j\notin \cS$).
{At a formal level,  a solution $S=L_\omega^{-1} F$ of \eqref{homolu}  is readily determined.  In order to prove that  $S$ is in fact analytic,  one has to control the contribution given by small divisors (i.e.  the eigenvalues of $L_\omega$). This is possible by imposing the arithmetic Melnikov conditions on the frequencies \eqref{diofantino nuc},  where the constraint $\sum_{j\in\cS^c}|\ell_j| \le 2$  comes from the fact that $F$  is at most quadratic in the normal variables,  while the 
zero mass and momentum conditions come from the presence of the corresponding quadratic constants of motion.}\\
{
Also in this Sobolev context,  due to the presence of small divisors,  one bounds the solution $S$ at the cost of some ``loss of information".  More explicitly,
 if the Hamiltonian vector field $X_F$ maps $B_{r}(\tw_p)\to \tw_p$,  then   $X_S$  maps $B_{r}(\tw_{p+\delta})\to \tw_{p+\delta}$ (with $\delta>0$); this means that $S$ is analytic  in a \co{smaller} domain,  since $\tw_{p+\delta}\subset \tw_p$.   Then,  at each iteration,  one is able to define the solutions $S$ only on a (ball of a) smaller phase space of the Banach scale $(\tw_p)_{p>1}$,  the target space shrinking accordingly. } Resembling the finite dimensional case, we call $\delta$ the ``loss of regularity "

{Of course,  the convergence of a KAM scheme is achieved only if one loses a {\sl summable, amount of regularity} $\delta_n$ at each step $n$.
Nonetheless the typical characteristic of  the Sobolev case is the presence of a {\it  lower bound } on the loss of regularity,  so that a KAM scheme based only on such bound cannot converge.  
In KAM schemes for quasi-periodic Sobolev solutions a similar problem arises but it is bypassed by using tame estimates, or approximation by analytic functions.  In the present context
it is not clear if one can exploit similar ideas,
  since we are already working with analytic nonlinearities.}
{Our point of view is to bypass this problem by taking full advantage of the fact that the tangential sites set $\cS$  is sparse (recall Definition
\ref{orey}). 
In order to get an intuition  of our strategy
take for simplicity $\cS=\{s(i)=2^i\,,\ i\in\N\}$
 and  start by considering  a toy model  with a non-linearity for which the set
\begin{equation}\label{tobia}
\cU_\cS:=\set{u\in \tw_p:\quad u_j=0 \quad \forall j\notin\cS}
\eqsim\{v\in\ell^\infty\ \mbox{s.t.}\  \sup_{i\in\N}|v_i|2^{ip}<\infty\}
\end{equation}
is invariant for the dynamics; so that we may study the equation restricted to $\cS$. 
Then we are essentially in the analytic case
(or Gevrey or slightly less if we take a slower growth for $s(i)$)
and the Bourgain strategy in \cite{Bourgain:2005} (or \cite{BMP:almost}, \cite{Co})
applies\footnote{Note that Bourgain's 
 Diophantine condition reads $|\omega\cdot\ell|\ge \g\prod_{i\in\N}(1+ \ell_i^2 \jap{i}^2)^{-1}$. This coincides with \eqref{diofantino nuc} when $\ell$ is supported only on $\cS$, modulo the renaming of the 
indexes in \ref{tobia}}. 
Of course for the NLS equation \eqref{NLS} the set $\cU_\cS$ is not invariant and the main difficulties  arise from interaction between tangential  and normal modes.}

In the Diophantine estimates, to deal with the terms $\ell$ not  supported only on the tangential sites, we use the constants of motion and the dispersive nature of the equation ($\omega_k\sim k^2$).
Once one has guessed the correct Diophantine conditions \eqref{diofantino nuc}, the proof of  Proposition \ref{mah} (i.e. controlling the solution of the Homological equation \ref{homolu}) is the real core of  our result. 
Again the proof  is simple if $F$ is supported
only on $\cS$ or $\cS^c$, on the other hand dealing with the interaction 
between tangential and normal sites requires a careful case analysis. 
\\
The final goal is to control the norm of $X_S$  as a map $B_{r}(\tw_{p+\delta})\to \tw_{p+\delta}$, for arbitrarily small $\delta$.
 This should be compared with the corresponding estimate on $X_S$ in \cite{BMP:2019} Proposition 7.1
item ($\mathtt M$). In the latter paper we take $\cS=\Z$ and then, in order to control $L_\omega^{-1} F$ we cannot take any $\delta>0$ but instead must require $\delta \ge \delta_*$, where $\delta_*>0$ is some fixed quantity. As one can expect the less {\it sparse} is $\cS$ the worst bounds one gets.
The quantitative condition in Definition \ref{orey} is needed in order to ensure convergence of the iterative KAM scheme.
We suitably choose the values of the parameter at each iterative step $n\in\mathbb N$, 
in particular the loss of regularity
$\delta_n$ must be summable, e.g. $\delta_n\sim n^{-c},$ for some $c>1$.
Then   the divergence due to small divisors, which is of order
$\exp(\exp(n^{c/\eta}))$ by \eqref{ss148} and with $\eta$ defined in \eqref{cicoria},
 must be compensated by 
the super-exponential convergence $\exp(-\exp(C n))$ 
given by the KAM quadratic scheme. This forces $c<\eta$ and  $\eta>1$.  
\\
The super-linearity assumptions \eqref{theshow}
 and 	\eqref{mustgoon} are essential for our estimate on the Homological equation to work.
 The asymptotic growth in \eqref{cicoria} is only needed in the KAM step.
 A slower growth would give rise to a too large estimate on the solution
 of the Homological equation which would not be compensated anymore
 by the quadratic convergence of the KAM scheme.

\vskip10pt

\section{Proof of Theorem \ref{deltoide}}\label{prove}

The solutions of Theorem \ref{deltoide} are constructed as the limit of sequences of smooth quasi-periodic functions.
Fix  $p_*>1$ and an admissible set of tangential sites $\cS$. Fix $\gamma>0$ and take $\rr$ such that \eqref{cornettonep} holds. 
For any  potential $\VSc\in [-\nicefrac14,\nicefrac14]^{\cS^c}$ and   $I\in\mathcal I(p_*,\rr)$, 
we  apply Theorem \ref{toro Sobolevp} and obtain, for all frequencies $\nu\in \cC(\VSc,I,\g)$, an  invariant torus.   
\\
Now   
	define $I^{(n)}=\pa{I^{(n)}_j} $ by setting $I^{(n)}_j=I_j$ if $|j|\le n$ and $I^{(n)}_j=0$ otherwise.
We apply Theorem \ref{toro Sobolevp}   with
	$I\rightsquigarrow I^{(n)},$ 
	$\gamma\rightsquigarrow\gamma/2$
	and set 
	$\mathscr V^{(n)}(\cdot):=\big(\VSf(\cdot,\VSc,I^{(n)}),\VSc\big)$  
	a $\Phi_n(\cdot;\cdot):=\Phi(\cdot;\cdot,\VSc, I^{(n)})$.
We have obtained a sequence of NLS equations with potentials $\cV^{(n)}(\nu)$
for $\nu\in [-\nicefrac14,\nicefrac14]^{\cS}$;
each equation admits a finite dimensional invariant torus with frequency $\nu$, for all  
$\nu \in \cC(\VSc,I^{(n)},\gamma/2)$. It is not hard to see that each torus supports a smooth quasi-periodic solution.

The idea is to show that (at least up to a sub-sequence)  the limit over $n$ is the desired almost-periodic solution.

\smallskip

\noindent
\co{First step (construction of the Cantor set).} {In order to apply   Theorem \ref{toro Sobolevp} for each $n$ (with the same frequency)  we take the ``good frequencies"  in the (countable) intersection of the Cantor-like sets where all the tori are defined. Correspondingly, we define the set of ``good potentials". This is the content of the following result, which is proved in Section \ref{bellogrosso}.}
	
	\begin{lemma}\label{culonembo}	
	There exists a sub-sequence $n_k\to\infty$ (independent of $\VSc$) such that the following holds. 
Defining the Borel sets
		\begin{eqnarray*}
		\cC'(\VSc,I,\gamma)&:=&\cC(\VSc,I,\gamma)\bigcap_{k\in\N}  
	\cC(\VSc,I^{(n_k)},\gamma/2)\,,
\\
\mathcal G_\cS(\VSc,I,\gamma)
&:=&
[-\nicefrac14,\nicefrac14]^\cS\ \bigcap\ 
	\VSf\big( \cC'(\VSc,I,\g),\VSc,I\big)\,,
	\\
\mathcal G(I,\gamma)
&:=&
\{ V=(\VS,\VSc)\in[\nicefrac14,\nicefrac14]^\Z\ \mbox{s.t.}\ \VS\in\mathcal G_\cS(\VSc,I,\gamma)  \} \,,
\label{Sborel}
\end{eqnarray*}
we have the estimates
	\begin{equation}\label{giuncata}\begin{aligned}
	&{\rm meas}_{[-\nicefrac14,\nicefrac14]^\cS} \Big(\mathcal G_\cS(\VSc,I,\gamma)\Big) \geq 1-C_0\gamma
	\,,\qquad\\
&{\rm meas}_{[-\nicefrac14,\nicefrac14]^\Z} \Big(\mathcal G(I,\gamma)\Big) \geq 1-C_0\gamma\,,
\end{aligned}
\end{equation}
where $C_0$ is defined in Theorem \ref{toro Sobolevp}.
	\end{lemma}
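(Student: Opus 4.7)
The plan is to use the explicit Diophantine description of $\cC$ from Theorem \ref{torobolev} combined with the Lipschitz estimate \eqref{batacchio} for $\Omega$ in $I$. Since $|I-I^{(n)}|_{2p}\to 0$ as $n\to\infty$, I expect the Cantor sets $\cC(\VSc,I^{(n)},\g/2)$ to contain $\cC(\VSc,I,\g)$ up to an error whose measure tends to $0$ uniformly in $\VSc$. Concretely, setting $P(\ell):=\prod_{s\in\cS}(1+|\ell_s|^2\jap{i(s)}^2)^{-3/2}$ and combining \eqref{batacchio} with the constraint $\sum_{j\in\cS^c}|\ell_j|\le 2$ from Definition \ref{diomichela}, one has
$$
\bigl|(\omega(\nu,\VSc,I)-\omega(\nu,\VSc,I^{(n)}))\cdot\ell\bigr|\le 2C\g\rr^{-2}\e\,|I-I^{(n)}|_{2p}
$$
for every admissible $\ell$; hence if $\nu\in\cC(\VSc,I,\g)\setminus\cC(\VSc,I^{(n)},\g/2)$, some admissible $\ell^*$ must satisfy $P(\ell^*)\le 4C\rr^{-2}\e\,|I-I^{(n)}|_{2p}$. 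Denoting by $B_\ell^{(n)}$ the subset of $\cQ_\cS$ where the $\ell$-th Diophantine inequality fails at $(I^{(n)},\g/2)$, the measure estimate underlying the first line of \eqref{mirto} supplies a summable majorant $\meas_{\cQ_\cS}(B_\ell^{(n)})\le M(\ell)$ with $\sum_\ell M(\ell)<\infty$, uniformly in $n$ and $\VSc$. Since $P(\ell)>0$ for every admissible $\ell$, dominated convergence on the counting measure then yields
$$
\eta(n):=\sup_{\VSc}\sum_{\ell:\,P(\ell)\le 4C\rr^{-2}\e|I-I^{(n)}|_{2p}}\meas_{\cQ_\cS}(B_\ell^{(n)})\;\longrightarrow\;0 \quad\mbox{as}\quad n\to\infty.
$$

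Next I extract the subsequence diagonally: choose strictly increasing $n_k\to\infty$ (depending only on $I,\g,\rr,\e$, therefore independent of $\VSc$) with $\eta(n_k)\le C_0\g/2^{k+2}$. Combining the comparison above with the first line of \eqref{mirto},
$$
\meas_{\cQ_\cS}\bigl(\cQ_\cS\setminus\cC'(\VSc,I,\g)\bigr)\le \meas_{\cQ_\cS}\bigl(\cQ_\cS\setminus\cC(\VSc,I,\g)\bigr)+\sum_k\eta(n_k)\le C_0\g+\tfrac{C_0\g}{2},
$$
which, after enlarging $C_0$, is the first estimate in \eqref{giuncata}. Borel measurability of $\cC'(\VSc,I,\g)$ is automatic as a countable intersection of Borel sets.

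For the second estimate I would run the very same mechanism in the multiplier coordinate, replacing the first line of \eqref{mirto} by the second and using the Lipschitz continuity of $\VSf$ in $I$ together with its $C\e\g$-closeness to $\VSs$: these allow me to compare $\VSf(\cdot,\VSc,I^{(n_k)})$ with $\VSf(\cdot,\VSc,I)$ so that I still bound $\meas_{[-\nicefrac14,\nicefrac14]^\cS}\bigl(\VSf(\cC(\VSc,I,\g)\setminus\cC(\VSc,I^{(n_k)},\g/2),\VSc,I)\bigr)$ by $C_0\g/2^{k+2}$, and summation yields the measure bound on $\mathcal G_\cS(\VSc,I,\g)$. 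The bound on $\mathcal G(I,\g)$ then follows from Fubini on $[-\nicefrac14,\nicefrac14]^\Z\cong [-\nicefrac14,\nicefrac14]^\cS\times[-\nicefrac14,\nicefrac14]^{\cS^c}$: by definition the $\VSc$-section of $\mathcal G(I,\g)$ is $\mathcal G_\cS(\VSc,I,\g)$, and measurability of the section map comes from the continuity of $\VSf$ in $\VSc$ with respect to the product topology.

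The hard part will be establishing the uniform summable majorant $M(\ell)$ and its multiplier-space analogue: both fall out of the same Diophantine measure machinery that produces \eqref{mirto}, but they must be reorganized by the level sets of $P(\ell)$ so that precisely the ``small-$P(\ell)$ tail'' is what vanishes as $|I-I^{(n)}|_{2p}\to 0$. Uniformity in $\VSc$, on the other hand, will be essentially automatic, since the Lipschitz constants in \eqref{batacchio} and those controlling $\VSf$ are independent of $\VSc\in[-\nicefrac14,\nicefrac14]^{\cS^c}$; this is what guarantees that the diagonal subsequence $n_k$ can be chosen once and for all.
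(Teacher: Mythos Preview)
Your decomposition via the threshold on $P(\ell)$ is exactly the paper's key observation \eqref{caponata}, and the choice of the subsequence is done the same way. The core strategy is right. However, there is a misreading and a genuine gap.

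First, both lines of \eqref{giuncata} live in the \emph{potential} coordinates: the first is a lower bound on ${\rm meas}_{[-\nicefrac14,\nicefrac14]^\cS}(\cG_\cS)$, not a bound on ${\rm meas}_{\cQ_\cS}(\cC')$. So the estimate you derive in your second paragraph, while correct, is not the first line of \eqref{giuncata}. (Also, $C_0$ is the fixed absolute constant from Theorem~\ref{toro Sobolevp}; you cannot ``enlarge'' it. You must instead use the slack built into \eqref{coperta}, where $\sum_{\ell\in A}\gatta(\ell)\le C_0/17$.)

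The substantive gap is the passage to potential space. You propose to ``compare $\VSf(\cdot,\VSc,I^{(n_k)})$ with $\VSf(\cdot,\VSc,I)$'' via Lipschitz continuity in $I$, but in infinite product spaces a map that is merely Lipschitz $\ell^\infty$-close to a shift need not push forward the product measure to something comparable; there is no Jacobian argument available. What you actually need is a bound on ${\rm meas}_{[-\nicefrac14,\nicefrac14]^\cS}\big(\VSf(\cR_\ell(\VSc,I^{(n_k)},\g/2),\VSc,I)\big)$, i.e.\ the image of a resonant set for action $I^{(n_k)}$ under the map $\VSf$ evaluated at the \emph{different} action $I$. The paper isolates exactly this two-parameter estimate as \eqref{nastrorosa} inside the proof of Lemma~\ref{misurino}: one passes to the $V_\cS$-variable via the inverse $g$ of $\VSf$, reduces to the \emph{finite-dimensional} slice indexed by $\bar\cS=\{j\in\cS:\ell_j\neq 0\}$, and shows the directional increment of $\omega(q+V_\cS+g(V_\cS,I'),I)\cdot\ell$ along $\sigma_{\bar\cS}$ is bounded below by $|\ell_\cS|/2$ (Lemma~\ref{zufolo} then handles the geometry). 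This finite-dimensional reduction is precisely what makes the measure transfer work, and it is what replaces your vague comparison step. Once \eqref{nastrorosa} is in hand, the rest of your argument (tail sum over $\ell$ with small $P(\ell)$, summable subsequence, Fubini for $\cG(I,\g)$) goes through as in the paper.
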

\noindent
	\co{Second step (construction of the convergent sub-sequence).} We now prove that for every $V\in \mathcal G(I,\gamma)$ 
	we can solve \eqref{ranefritte} with $V=(\VS,\VSc)$.
Indeed	for  every $\VS\in \mathcal G_\cS(\VSc,I,\gamma)$
there exists 
$\nu\in \cC'(\VSc,I,\g)$ such that
$\VSf\big(\nu,\VSc,I\big)=\VS.$
We claim that
	 \begin{equation}\label{legna}
 \ur(t,x):=\Phi(v(t,x);\nu,\VSc,I)\,,\quad
 \mbox{where}\quad 
 v(t,x):=
\sum_{j\in\cS} \sqrt{I_j} e^{\im (j x+\nu_j t)}\,,
\end{equation}
satisfies \eqref{ranefritte}
and, moreover, it is the uniform limit of the $C^\infty$-quasi-periodic functions
	$$
	 \ur_k(t,\cdot):=\Phi_{n_k}(v_k(t,\cdot); \nu)
	\qquad
	\mbox{where}\qquad 
	v_k(t,x):=\sum_{j\in\cS, |j|\leq n_k} \sqrt{I_j} e^{\im (j x+\nu_j t)}
	\,.
	$$
	Note that by construction $v,\ur\in \tw_{p_*}$ while
	 $t\mapsto\Phi_{n_k}(v_k(t,\cdot);\nu)=:\ur_k(t,\cdot)$ is a classical
	(actually $C^\infty$)
	quasi-periodic solution of 
	(NLS$_{V^{(n_k)}}$) with $V^{(n_k)}=\big(\mathscr V^{(n_k)}(\nu),\VSc\big)$.
	Thus  each $\ur_k$ satisfies  \eqref{ranefritte} with $V^{(n_k)}$
	in place of $V$.
	\\
	Taking
	$p<p_*$ satisfying  \eqref{diavola}
	 we get
	$$
	c_k:=\sup_{|j|>n_k}\sqrt{I_j}|j|^{p}\ \to\ 0\,,\qquad
	\mbox{as}\ \ \ k\to\infty
	$$
	and, for every $t\in\R,$
	$$
	|v(t,\cdot)-v_k(t,\cdot)|_{p}=c_k\,,\qquad
	|I-I^{(n_k)}|_{2p}=c_k^2\,.
	$$
	Since the map $I\to \VSf(\nu,\VSc,I)$ is Lipschitz
	then $V^{(n_k)}\to V$ in $\ell^\infty$.
	Moreover since $\Phi$ is also Lipschitz (w.r.t. $u$ and $I$), 
	 for every $t\in \R$ 
	$$
	\begin{aligned}
	| \ur(t,\cdot)- \ur_k(t,\cdot)|_{p} &=& | \Phi(v(t,\cdot);\nu,\VSc,I) - \Phi(v_k(t,\cdot);\nu,\VSc,I^{(n_k)})|_{p} 
\\&	\le& L\big(|v(t,\cdot)-v_k(t,\cdot)|_{p}+ 
	|I-I^{(n_k)}
	|_{2p}\big)
	\leq L(c_k+c_k^2)
	\end{aligned}
	$$
	for a suitable\footnote{We can take $L=2$ since $\Phi$  is $C\e$-close to the identity.} $L>0.$
	Then  $\ur_k\to \ur$ uniformly\footnote{Given $f:\R^2\to\C$ we have
	$\|f\|_{L^\infty(\R^2)}=\sup_{t\in\R}\|f(t,\cdot)\|_{L_x^\infty(\R)}
	\leq \sup_{t\in\R}|f(t,\cdot)|_{p}$ for every $p\geq 1.$} in $\R^2$.
	In order to prove
	  that
	$\ur$ satisfies \eqref{ranefritte} we 
	have to show that
	$$
	\int_{\R^2} (-\im \chi_t + \chi_{xx})(\ur-\ur_k) - (V\ast \ur -V^{(n_k)}\ast \ur_k)\chi
	-
 	\big(f(|\ur|^2)\ur-f(|\ur_k|)\ur_k\big)\chi\, dx\, dt\,
	$$
	tends to zero as $k\to \infty$.
	This follows since $\ur_k\to \ur$ uniformly and observing that
	\begin{eqnarray*}
	&&\|V^{(n_k)}\ast \ur_k-V\ast \ur\|_{L^\infty(\R^2)}
	\leq 
	|V^{(n_k)}\ast \ur_k-V\ast \ur|_{p}
	\\
	&&
	\leq
	|V^{(n_k)}-V|_{\ell^\infty}| \ur_k|_{p} +|V|_{\ell^\infty}| \ur-\ur_k|_{p}
	\ \stackrel{k\to\infty}\longrightarrow\ 0\,.
\end{eqnarray*}	
This proves the claim in \eqref{legna}.

\smallskip

\noindent
\co{Third step (A set of good potentials).} We now show that
for almost every $V\in[-\nicefrac14,\nicefrac14]^\Z$ there exists at least one solution of \eqref{ranefritte}.
For every integer $h\geq 1$  take $\gamma_h=\abs{f}_{\mathtt R}/h$  and  $\rr_h$ such that \eqref{cornettonep} holds as an equality, namely
$\rr_h:=  \sqrt{\e_*\g_h \mathtt R/\abs{f}_{\mathtt R}}=
\sqrt{\e_* \mathtt R/h}$. For every given sequence $I_h\in\cI(p_*,\rr_h)$
	we set
	\begin{equation}\label{acquaazzurra}
	\mathcal G:=\bigcup_{h\geq 1} \mathcal G(I_h,\gamma_h)\,.
	\end{equation}
By \eqref{giuncata}
$\mathcal G$
has full measure in $[-\nicefrac14,\nicefrac14]^\Z$.
This implies that for almost every $V\in[-\nicefrac14,\nicefrac14]^\Z$
there exists an integer $h\geq 1$ such that $V\in \mathcal G(I_h,\gamma_h).$
Then  \eqref{legna} (with $I=I_h$) gives a solution of \eqref{ranefritte}.

\smallskip

\noindent
\co{Fourth step (Abundance of solutions).}
In order to find infinitely many solutions for almost every $V$ in $[-\nicefrac14,\nicefrac14]^\Z$ 
 we proceed as follows.
 First we choose in \eqref{acquaazzurra}
 $\sqrt{(I_h)_j}:=\frac12
 \rr_h\jjap{j}^{-p_*}$.
 All the above construction depends on the choice of the set $\cS$ of admissible tangential sites; in particular this holds
 for the set $\mathcal G$ above. Let us now consider two distinct admissible sets $\cS$ and $\cS'$, and let us denote by $\cG,\cG'$ the corresponding sets of potentials. We claim that for each $V\in\cG\cap \cG'$ there exists (at least) two distinct almost-periodic solutions. 
Indeed, there exist $h,h'$ such that
 $V\in \mathcal G(I_h,\gamma_h)\cap \mathcal G'(I_{h'},\gamma_{h'})$,
 where $\mathcal G(I,\gamma),\mathcal G'(I,\gamma)$  are the sets defined in
Lemma \eqref{culonembo} corresponding to $\cS, \cS'$.
 Let us call $\ur,v,\Phi$ and $\ur',v',\Phi'$ the functions defined in \eqref{legna}, respectively
 for $\cS$ and $\cS'$. 
 Since by Theorem \ref{toro Sobolevp} (point {\it ii)}) the maps $\Phi$ and $\Phi'$ are
  $\bar\rr/16$-close to the identity where $\bar\rr:=\max\{\rr_h,\rr_{h'}\}$,
 then
  $$
  |\ur(t,\cdot)-\ur'(t,\cdot)|_{p_*}\geq  |v(t,\cdot)-v'(t,\cdot)|_{p_*} -
  \bar\rr/8\geq 
 \bar\rr/2-\bar\rr/8>0\,.
  $$
The same holds for any translations $\ur(t+t_0,x+x_0)$ and $\ur'(t,x)$. 
 Let us now consider a countable infinity of  distinct admissible sets $\cS$, and call $\mathcal G_*$
 the countable intersection of the corresponding sets $\mathcal G$. 
 The set $\mathcal G_*$ has full measure in $[-\nicefrac14,\nicefrac14]^\Z$ as well.
 Then for every $V\in \mathcal G_*$ we construct infinitely many solutions corresponding to different $\cS$'s.

\smallskip

\noindent
\co{Regularity}.
By \eqref{legna} and  \eqref{katsendorff} we know that if $p_*>3$ then $v_t$ and $v_{xx}$ are continuous functions on $\R^2.$
Moreover, by analyticity of $v\to\Phi(v;\nu,I)$, also $\ur_t$ and $\ur_{xx}$ are continuous function on $\R^2.$
Therefore $u$ is a classical solution.
On the other hand, when $p_*\leq 2$ and $\sqrt{I_j}=(\rr/2)\jap{j}^{-p_*}$ for all $j\in\cS$, for every $t$  {\sl the function $\ur_{xx}(t,\cdot)$
is not in $L^2$.} Otherwise its Fourier coefficients $(\ur_{xx}(t,\cdot))_j=- j^{2}(\ur(t,\cdot))_j$ would belong to $\ell_2$.
As in the linear case (recall \eqref{piscia})
we claim that
$$
\limsup_{|j|\to	\infty}
j^{2}|(\ur(t,\cdot))_j|>0\,.
$$
	Indeed, since $\Phi$ is close to the identity (in $\tw_{p_*}$) $\Phi=Id+\Phi_1$
		with $\|\Phi_1\|$ uniformly $\rr/16$-small, we have that   
		$\ur(t,\cdot):=\Phi(v(t,\cdot))=v(t,\cdot)+\Phi_1(v(t,\cdot))$ 
		and its
		 Fourier coefficients satisfies
		$|(\ur(t,\cdot))_j|\geq (\rr/4)\jap{j}^{-p_*}$  for all $j\in\cS$;
		since $\cS$ is an infinite set this is a contradiction. 
			 \qed

\section{Functional setting}\label{tre}

\noindent
Let us introduce the spaces of Hamiltonians used in the paper.

\noindent
\begin{defn}[Multi-index notation]\label{baccala}
  In the following we denote, with abuse of notation, by $\N^\Z$ the set of 
   multi-indexes $\bal,\bbt$ etc. such that
  $|\bal|:=\sum_{j\in\Z}\bal_j$ is finite.
  As usual $\bal!:=\prod_{j\in\Z,\, \bal_j\neq 0}\bal_j!$.
  Moreover $\bal\preceq\bbt$ means $\bal_j\leq\bbt_j$
  for every $j\in\Z$, then $\binom{\bbt}{\bal}:=\frac{\bbt!}{\bal!(\bbt-\bal)!}.$
   Finally take $j_1<j_2<\ldots<j_n$
  such that $\bal_j\neq 0$ if and only if $j=j_i $ for some $1\leq i\leq n$,
  as usual we set $\partial^{\bal} f:=\partial^{\bal_{j_1}}_{u_{j_1}}
  \ldots \partial^{\bal_{j_n}}_{u_{j_n}} f
 \,;$
  analogously for $\partial_{\bar u}^\bbt f.$
  \end{defn}
  
\smallskip
\begin{defn}[regular Hamiltonians]\label{Hr} 
	Consider a formal
	 power series expansion
	\begin{equation}\label{mergellina}
H(u)  = \sum_{(\bal,\bbt)\in \cM }H_{\bal,\bbt}u^\bal \bar u^\bbt\,,
	\qquad
	u^\bal:=\prod_{j\in\Z}u_j^{\bal_j}\,,
\end{equation}
where
\begin{equation}\label{zorro}
\cM:=\left\{
(\bal,\bbt)\in \N^\Z\times\N^\Z\,, \ \ {\rm s..t.}\ \ 
|\bal|=|\bbt|<+\infty\,,\ \ 
\sum_{j\in \Z} j (\bal_j-\bbt_j)=0
\right\},
\end{equation}
satisfying the reality condition
		\begin{equation}\label{real}
		H_{\bal,\bbt}= \overline{ H}_{\bbt,\bal}\,, \qquad
		\forall\, (\bal,\bbt)\in \cM\,.
		\end{equation}
	We say that $H\in \scH_{r,p}$ for $p>1,$ $r>0$ if
	\begin{equation}\label{panettone}
	 \abs{H}_{r,p}
 :=
  \frac1r \pa{\sup_{\norm{u}_{p}\leq r} \norm{{X}_{\underline H}}_{p} } 
 <\infty\,,
	\end{equation}
	where ${X}_{\underline H}$ denotes the Hamiltonian vector field,  intrinsically defined through the standard symplectic form $\varpi = \im\sum_j du_j\wedge d\bar{u}_j$ as
  $$
  i_{X_{\underline H}}\varpi(\cdot) := \varpi(\cdot, X_{\underline H}) = d\underline H(\cdot),
  $$
  $\underline H$ being the associated majorant Hamiltonian  $\underline H(u):=
	\sum_{(\bal,\bbt)\in \cM }|H_{\bal,\bbt}|u^\bal \bar u^\bbt$ of $H(u)$, and $i_{X_{\underline H}}$ being the usual contraction of a differential form with a vector field. 

\smallskip

We denote by $\scH_{r,p}(\C)$ the space of $H$ satisfying  \eqref{mergellina} 	
and \eqref{panettone} but not necessarily the reality condition \eqref{real}. \\
Finally, given any $F, G \in \scH_{r,p}(\C) $, we define in the usual manner the Poisson brackets as {$\set{F,G} := \varpi(X_F,X_G)$}.\\
 For details on the symplectic structure and Hamiltonian vector field see for instance \cite{BMP:2019}.
\end{defn}	
\noindent
	Note that by Lemma 2.1 of \cite{BMP:almost}
		\begin{equation}\label{norma1}
 \abs{H}_{r,p}
 =
\frac12  \sup_j  
\sum_{(\bal,\bbt)\in \cM} \abs{H_{\bal,\bbt}}\pa{ \bal_j + \bbt_j}u_p^{\bal + \bbt - 2e_j}
\,,
 \end{equation}
  where $u_p=u_p(r)$ is defined as
 \begin{equation}\label{giancarlo}
 u_{p,j}(r):= r  \jjap{j}^{-p}  \,.
 \end{equation}

\begin{rmk}
Regarding $\cM$ in \eqref{zorro} we note that the condition 
$|\bal|=|\bbt|$, i.e. $\fm(\bal-\bbt)=0$, corresponds to mass conservation,
namely 
 the $H$ Poisson commutes with the { \sl mass} $\sum_{j\in \Z}|u_j|^2$;
 moreover
 $\sum_{j\in \Z} j (\bal_j-\bbt_j)=0$, i.e.   $\pi(\bal-\bbt)=0$, corresponds to
	 momentum conservation, 
	 namely $H$ Poisson commutes with the 
		{ \sl momentum} $\sum_{j\in \Z}j|u_j|^2$.
\end{rmk}

\noindent
	Note that 
$\abs{\cdot}_{r,p}$ is a semi-norm
on  $\scH_{r,p}$ and a norm on its subspace
\begin{equation}\label{lenticchia}
 \scH_{r,p}^0:=
 \{\
		H\in\scH_{r,p}\ \ {\rm with}\ \ 
H(0)=0\ \}\,,
\end{equation}
endowing $\scH_{r,p}^0$ with a Banach space structure.
Moreover the space $\scH_{r,p}$ enjoys the following algebra property with respect to Poisson brackets of Hamiltonians.
	\begin{prop}[Poisson structure]\label{fan}
For any $F, G \in\scH_{r + \rho,p}$, with $\rho>0$,  one has
$
 \{F,G\}\in\scH_{r,p}^0\,$
with the bound
\begin{equation}\label{menate}
	\norm{\{F,G\}}_{r,p}
	\le 
	8\max\set{1, \frac{r}{\rho} }
	\norm{F}_{r+\rho,p}\norm{G}_{r+\rho,p}\,.
\end{equation}
\end{prop}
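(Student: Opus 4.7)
The plan is to estimate $|\{F,G\}|_{r,p}$ directly from the Taylor series expansion, using the equivalent formula \eqref{norma1} as the starting point. Since $\{F,G\}=\im\sum_k(\partial_{u_k}F\,\partial_{\bar u_k}G-\partial_{\bar u_k}F\,\partial_{u_k}G)$, the coefficient of $u^{\bm{\mu}}\bar u^{\bm{\nu}}$ in $\{F,G\}$ is a sum over an index $k$ of bilinear expressions in $F_{\bal,\bbt}$ and $G_{\bal',\bbt'}$ subject to $\bal+\bal'=\bm{\mu}+e_k$ and $\bbt+\bbt'=\bm{\nu}+e_k$, carrying the combinatorial weight $(\alpha_k\beta'_k-\beta_k\alpha'_k)$. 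Feeding this into \eqref{norma1}, the key algebraic identity is
\[
u_p(r)^{\bm{\mu}+\bm{\nu}-2e_j}\;=\;u_p(r)^{\bal+\bbt-2e_j}\cdot u_p(r)^{\bal'+\bbt'-2e_k}\,,
\]
which decouples the contributions of $F$ and $G$. Bounding $\bm{\mu}_j+\bm{\nu}_j\le(\alpha_j+\beta_j)+(\alpha'_j+\beta'_j)$ and $|\alpha_k\beta'_k-\beta_k\alpha'_k|\le\alpha_k\beta'_k+\beta_k\alpha'_k$ produces four symmetric contributions.

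Each contribution factorises, up to a sum over $k$, into a piece which for fixed $j$ resembles the expression \eqref{norma1} for $|F|_{r,p}$ times a piece which for fixed $k$ resembles the analogous expression for $|G|_{r,p}$. In order to take the $k$-sum outside uniformly, I rescale $u_p(r)=q\,u_p(r+\rho)$ with $q:=r/(r+\rho)$; on the side carrying the extra $k$-dependence this produces a polynomial weight $\sum_k\alpha_k=|\bal|$ (or $\sum_k\beta_k=|\bbt|$), i.e.\ an aggregate factor of the form $(|\bal|+|\bbt|)\,q^{|\bal|+|\bbt|-2}$. The main analytic ingredient is then the one-variable elementary bound
\[
\sup_{m\ge 2} m\,q^{m-2}\;\le\; C\,\max\{1,\,r/\rho\}\,,
\]
whose proof is a routine calculus maximisation, sharply attained near $m^\star\approx r/\rho$. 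Once this absorbs the polynomial weight, the remaining sums are recognised as $|F|_{r+\rho,p}$ and $|G|_{r+\rho,p}$; adding the four symmetric contributions and tracking constants yields \eqref{menate}.

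The residual checks that $\{F,G\}$ actually lies in $\scH^0_{r,p}$ are routine: the mass/momentum constraints \eqref{zorro} are preserved because both $\sum_j|u_j|^2$ and $\sum_j j|u_j|^2$ Poisson-commute with every monomial in $\cM$; the reality condition \eqref{real} follows from $\overline{\{F,G\}}=\{\bar F,\bar G\}$; and $\{F,G\}(0)=0$ because both factors in each Poisson-bracket monomial have strictly positive degree. The main obstacle is instead the multi-index bookkeeping: a naive estimate easily produces a factor $(r+\rho)/\rho$ rather than the sharp $\max\{1,r/\rho\}$, and one must distribute the weight $\bm{\mu}_j+\bm{\nu}_j$ so that the $k$-sum is absorbed by whichever of the two factors still has a $|\bal|+|\bbt|$-weight available. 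This symmetry is precisely what generates the four identical contributions and explains the numerical constant $8$ rather than $1$.
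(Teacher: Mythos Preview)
Your proposal is correct and follows essentially the same route as the paper's proof: write the Poisson bracket coefficient \eqref{catta}, bound $|\alpha_k\beta'_k-\beta_k\alpha'_k|\le\alpha_k\beta'_k+\beta_k\alpha'_k$ and split the weight $(\bal_\ell+\bbt_\ell)$ into four symmetric terms, factor each using the identity $u_p(r)^{\bal+\bbt-2e_\ell}=u_p(r)^{\bal'+\bbt'-2e_\ell}\,u_p(r)^{\bal''+\bbt''-2e_k}$, sum over the contraction index to produce a total-degree weight, rescale one factor to radius $r+\rho$, and invoke the one-variable bound $\sup_{m\ge 2}m\,q^{m-2}\le 2\max\{1,r/\rho\}$ (which the paper verifies in a footnote). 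The constant $8=4\times 2$ arises exactly as you say.

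One small correction: your argument that $\{F,G\}(0)=0$ because ``both factors have strictly positive degree'' is not quite the reason---constant terms are allowed in $\scH_{r,p}$. The paper instead observes that when $\bal'+\bal''=e_j=\bbt'+\bbt''$, mass conservation $|\bal'|=|\bbt'|$, $|\bal''|=|\bbt''|$ forces $\alpha'_j\beta''_j-\beta'_j\alpha''_j=0$; equivalently, mass conservation kills the linear Taylor coefficients so that $\partial_{u_k}F(0)=\partial_{\bar u_k}F(0)=0$.
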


The proof is given in  Appendix \ref{appendice tecnica}. Of course the same estimates hold 
in $\scH_{r,p}(\C)$.

\medskip
The next result  controls the norm of the NLS non-linearity 
$P$ defined in \eqref{Hamilto0}
and it is based
on the algebra property of $\tw_p$, $p>1$, with respect to convolution.

\begin{prop}[Proposition 2.1 of \cite{BMP:almost}]\label{neminchia}
There exist $c_1,c_2>1$ continuously depending  on $p>1$ such that
if $c_1 r^2\leq {\mathtt R}$ then, recalling \eqref{analitico},
$$
|P|_{r,p}	\leq 	c_2 \frac{\abs{f}_{\mathtt R}}{\mathtt R} r^2 \,.
	$$
 \end{prop}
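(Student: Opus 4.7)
The plan is to identify the majorant $\underline P$ in closed form and reduce the estimate of $|P|_{r,p}$ to the Banach algebra property of the Fourier--Lebesgue space $\cF(\tw_p)$ under pointwise multiplication. Writing $F(y)=\sum_{d\geq 1}\tfrac{f^{(d)}}{d+1}y^{d+1}$ gives
$$
P(u) = -\sum_{d\geq 1}\frac{f^{(d)}}{d+1}\int_\T |u(x)|^{2(d+1)}\,dx,
$$
and expanding $u(x)^{d+1}\overline{u(x)}^{d+1}$ by the multinomial theorem produces only nonnegative multinomial coefficients in front of each monomial $u^\bal\bar u^\bbt$. Since any given $(\bal,\bbt)\in\cM$ receives a contribution from only the single value $d=|\bal|-1$, passing to absolute values coefficient by coefficient yields the clean identification
$$
\underline P(u) = \int_\T \widetilde F(|u(x)|^2)\,dx, \qquad \widetilde F(y):=\int_0^y \widetilde f(s)\,ds, \qquad \widetilde f(y):=\sum_{d\geq 1}|f^{(d)}|\,y^d.
$$

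Next I would compute $X_{\underline P}$ directly from the definition. Using $\partial_{\bar u_j}|u(x)|^2 = u(x)\,e^{-\im jx}$, one sees that $(X_{\underline P})_j(u) = 2\pi\im\bigl(\widetilde f(|u|^2)\,u\bigr)_j$, whence $\|X_{\underline P}(u)\|_p = 2\pi\,\|\widetilde f(|u|^2)\,u\|_p$. For $p>1$, the space $\cF(\tw_p)$ is a Banach algebra under pointwise multiplication with a constant $\Calg(p)$ depending continuously on $p$ (this follows from $\sum_j\jjap{j}^{-p}<\infty$ together with the weighted triangle inequality for $\jjap{\cdot}^p$). Iterating gives $\||u|^{2d}\|_p \leq \Calg(p)^{2d-1}\|u\|_p^{2d}$, and hence
$$
\|\widetilde f(|u|^2)\|_p \;\leq\; \frac{1}{\Calg(p)}\sum_{d\geq 1}|f^{(d)}|\,y^d, \qquad y := \Calg(p)^{2}\|u\|_p^{2}.
$$

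Finally I would set $c_1 := \Calg(p)^2$. Under the hypothesis $c_1 r^2\leq\mathtt R$, any $u$ with $\|u\|_p\leq r$ satisfies $y\leq \mathtt R$, so factoring out one power of $y$ and using $y^{d-1}\leq \mathtt R^{d-1}$ bounds the series by $y\,|f|_{\mathtt R}/\mathtt R$. Applying the algebra property once more to the product $\widetilde f(|u|^2)\cdot u$ then yields
$$
\|X_{\underline P}(u)\|_p \;\leq\; 2\pi\,\Calg(p)^{2}\,\|u\|_p^{3}\,\frac{|f|_{\mathtt R}}{\mathtt R},
$$
and dividing by $r$ and taking the supremum over $\|u\|_p\leq r$ produces the claimed inequality with $c_2:=2\pi\,\Calg(p)^{2}$. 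The argument is essentially bookkeeping; there is no genuine analytic obstacle, as the only nontrivial input is the Banach algebra structure of $\cF(\tw_p)$, and the smallness $c_1 r^2\leq\mathtt R$ is imposed precisely so that the power series defining $\widetilde f$ can be summed after absorbing the algebra constant.
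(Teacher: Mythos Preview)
Your argument is correct: the identification $\underline P(u)=\int_\T \widetilde F(|u(x)|^2)\,dx$ is exact (each $(\bal,\bbt)\in\cM$ comes from a single degree $d=|\bal|-1$ with nonnegative multinomial weight), and from there the Banach algebra property of $\tw_p$ under convolution yields the bound with $c_1=\Calg(p)^2$ and $c_2=2\pi\,\Calg(p)^2$; the paper gives no proof here and simply quotes Proposition~2.1 of \cite{BMP:almost}, whose argument is precisely the one you have written out.
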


\medskip

\subsection{Parameter families of regular Hamiltonians}
Throughout the paper our Hamiltonians will depend on two parameters, the frequency $\omega\in \dg\subset\cQ$ and the action $I\in \cI(p,r)$. In  order to control the regularity w.r.t these parameters throughout the iterative scheme, we will introduce  an appropriate weighted norm as follows.

\medskip
Given $\gamma>0$,  a closed (w.r.t. the product topology) subset $\cO\subseteq\cQ$, and an open subset $\cI$ of some Banach space,
for $f:\cO\times\cI\to\C\,,\, (\omega,I )\mapsto  f(\omega,I)$ 
we set\footnote{We put $\gamma$ in front of the Lipschitz semi-norm because of dimensional reason. See Definition \ref{diomichela}.}
\begin{equation}
	\label{solenoide}
|f|^\g=|f|^{\g,\cO\times\cI}:=  \sup_{\substack{\omega\in\cO\\ I\in\cI} } \abs{f(\omega,I)} + \gamma  \sup_{\substack{\omega\ne \omega'\in\cO\\ I\in\cI}} \abs{\Delta_{\o,\o'}f},
\end{equation}
	where as usual 
\begin{equation}
	\label{deltab}
	\Delta_{\omega,\omega'}f := \frac{f(\omega,I) - f(\omega',I)}{\abs{\omega - \omega'}_{\infty}}\,.
\end{equation}

\begin{defn}\label{lamai}
Given $\cO$ and $\cI$ as above 
let
 $C_{\rm Lip}(\cO\times\cI)$ be  the Banach space of functions $f:\cO\times \cI \to \C,$
 which have finite norm $|f|^{\g,\cO\times\cI}$ and which are:
 \begin{itemize}[$\bullet$]
 	\item 
 continuous w.r.t  the product topology in $\cO$;
 \item
  analytic in  $\cI$. 
 \end{itemize}
In $C_{\rm Lip}(\cO\times\cI)$ we consider the subalgebra
${\rm F}(\cO\times\cI)$
 of  functions which depend only on a finite number of $\omega_j$'s.
The subalgebra ${\rm F}(\cO\times\cI)$ can be described as follows.
Given  $f\in {\rm F}(\cO\times\cI)$, which depends only on the variables
$(\o_{-k},\ldots,\o_k)$, there exists a function 
$\hat f\,:\,P_k(\cO)\times\cI\,\to\, \C$,
 where $P_k$ is the projection 
 $P_k:\R^\Z\to\R^{2k+1}$ defined as $P_k(\o):=(\o_{-k},\ldots,\o_k),$
 such that  
 $f(\o,I)=\hat f(\o_{-k},\ldots,\o_k,I)$ for every $(\o,I)\in\cO\times\cI.$
\\ 
Finally denote the closure of ${\rm F}(\cO\times\cI)$
in $C_{\rm Lip}(\cO\times\cI)$
 by $\cF(\cO\times\cI)$.
 \end{defn}
 
 \noindent
The spaces $C_{\rm Lip}(\cO\times\cI)$
 and $\cF(\cO\times\cI)$ are Banach algebras (i.e. multiplicative algebras with constant equal to 1)
 w.r.t. the norm $|\cdot|^{\g,\cO\times\cI}$.

\smallskip

\noindent
The following extension result will be proved in Appendix \ref{appendice tecnica}.
\begin{lemma}[Lipschitz extension]\label{godiva}
Given $\cO\subset \cQ$ and a ball $B_\rho$ in some complex Banach  space $E$
 and $f\in\cF(\cO\times B_\rho)$
there exists an extension $\tilde f : \cQ\times B_{\rho/2}\to\C$ such that
$|\tilde f|^{\g,\cQ\times B_{\rho/2}}\leq 2 |f|^{\g,\cO\times B_\rho},$
$\tilde f$ is continuous w.r.t  the product topology in $\cQ$,
$\tilde f$ is Lipschitz on  $B_{\rho/2}$ with estimate
$$
|\tilde f(\o,I)-\tilde f(\o,I')|\leq 
4\rho^{-1}|f|^{\g,\cO\times B_\rho}|I-I'|_E\,,
\qquad
\forall\, \o\in\pan\,,\ \   I,I'\in B_{\rho/2}\,.
$$
\end{lemma}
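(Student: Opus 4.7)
The plan is to exploit the structure $\cF(\cO\times B_\rho) = \overline{{\rm F}(\cO\times B_\rho)}$: extend explicitly on the dense subalgebra of functions depending on only finitely many $\omega$-coordinates, then pass to the limit in $|\cdot|^\gamma$. The key preliminary observation is that analyticity in $I$ gives an $I$-Lipschitz bound for free: since $I\mapsto f(\omega,I)$ is analytic on $B_\rho$ and bounded by $|f|^\gamma$ uniformly in $\omega\in\cO$, the standard Cauchy estimate yields $\|d_I f(\omega,I)\|_{E^*} \le |f|^\gamma/(\rho - |I|_E) \le 2|f|^\gamma/\rho$ for $I\in B_{\rho/2}$, and hence $|f(\omega,I) - f(\omega,I')| \le (2|f|^\gamma/\rho)|I-I'|_E$ on $\cO\times B_{\rho/2}$. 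This bound will simply be inherited by the $\omega$-extension, modulo a factor $2$ from splitting $\C$ into real and imaginary parts, yielding the desired coefficient $4\rho^{-1}|f|^\gamma$.

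For the $\omega$-extension, first take $g \in {\rm F}(\cO\times B_\rho)$, which factors as $g(\omega,I) = \hat g(P_k\omega, I)$ with $P_k\omega = (\omega_{-k},\ldots,\omega_k)$ and $\hat g$ defined on $P_k(\cO)\times B_\rho \subset \R^{2k+1}\times B_\rho$. Here we are in finite dimension, where the $\ell^\infty$-metric topology coincides with the product topology, so we can extend $\hat g$ to all of $P_k(\cQ) = [-\tfrac12,\tfrac12]^{2k+1}$ by applying McShane's lemma separately to the real and imaginary parts, pointwise in $I \in B_{\rho/2}$. This yields $\widehat g^{\mathrm{ext}}$ that preserves the sup-norm and the Lipschitz constants in both variables up to a factor $\sqrt 2$ from the $\C=\R^2$ splitting. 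Setting $\tilde g(\omega,I) := \widehat g^{\mathrm{ext}}(P_k\omega, I)$ gives an extension to $\cQ\times B_{\rho/2}$ which depends only on finitely many coordinates and is therefore automatically continuous in the product topology on $\cQ$; the $\ell^\infty$-contractivity of the projection $P_k$ then yields $|\tilde g|^{\gamma,\cQ\times B_{\rho/2}} \le \sqrt{2}\,|g|^{\gamma,\cO\times B_\rho}$.

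For general $f \in \cF(\cO\times B_\rho)$, take a Cauchy sequence $\{f_n\}\subset {\rm F}$ with $f_n\to f$ in $|\cdot|^\gamma$; applying the construction above to each $f_n - f_m \in {\rm F}$ and using the sub-additivity of the McShane formula gives $|\tilde f_n - \tilde f_m|^\gamma \le \sqrt{2}\,|f_n - f_m|^\gamma$. Hence $\{\tilde f_n\}$ is Cauchy in $C_{\rm Lip}(\cQ\times B_{\rho/2})$ and converges to some $\tilde f$ which inherits continuity in the product topology (as a uniform limit of product-continuous functions on compact $\cQ$), the sup-plus-$\gamma$-Lipschitz bound with the required factor $2$, and the $I$-Lipschitz estimate.

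The main obstacle is reconciling $\ell^\infty$-Lipschitz control with product-topology continuity: in infinite dimensions, $\ell^\infty$-Lipschitz alone does \emph{not} imply product-continuity (the function $\omega\mapsto|\omega-\omega^*|_\infty$ being the standard counterexample), so a naive McShane extension of $f$ directly on $\cQ$ with the full $\ell^\infty$ metric would typically fail product-continuity. A secondary technical issue is the comparison between the Lipschitz constant of $\hat g$ on $P_k(\cO)$ and that of $g$ on $\cO$: in general the former may exceed the latter, so one must either exploit additional structure of $\cO$ or use a slightly modified extension formula on the finite-dimensional slice. The reduction to the dense subalgebra ${\rm F}$ is precisely what makes the finite-dimensional, product-topology-compatible extension feasible.
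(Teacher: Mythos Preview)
Your overall architecture is the paper's: reduce to $g\in{\rm F}$ depending on finitely many $\omega_j$'s, extend in that finite-dimensional slice via McShane (where product topology and $\ell^\infty$-topology agree), then pass to the closure $\cF$. The Cauchy-in-$I$ observation and the product-continuity discussion are both fine.

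The gap is in the passage to the limit. You define $\tilde f_n$ as the extension of $f_n$ and then claim
\[
|\tilde f_n-\tilde f_m|^{\gamma}\ \le\ \sqrt{2}\,|f_n-f_m|^{\gamma}
\]
by ``sub-additivity of the McShane formula''. This is false: the McShane map $f\mapsto \bar f(x)=\inf_{u\in U}\bigl(f(u)+L\,d(x,u)\bigr)$ is nonlinear, and the Lipschitz seminorm of $\bar f-\bar g$ is \emph{not} controlled by that of $f-g$. A two-point example makes this explicit: on $U=\{0,1\}\subset\R$ take $f\equiv 0$ and $g(0)=0,\,g(1)=\epsilon$, extend both with the same $L=1$; then $\bar f-\bar g$ has slope $\pm 2$ on an interval of length $\epsilon/2$, so $\mathrm{Lip}(\bar f-\bar g)=2$ while $\mathrm{Lip}(f-g)=\epsilon$. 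Hence your sequence $\{\tilde f_n\}$ need not be Cauchy in $|\cdot|^\gamma$. (The sup-part \emph{is} stable, $\|\bar f-\bar g\|_\infty\le\|f-g\|_\infty$, but the $\gamma$-weighted Lipschitz part is not.)

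The fix, which is exactly what the paper does, is to avoid comparing extensions of different $f_n$'s: write $f=\sum_{n\ge 0}g_n$ with $g_0=f_0$, $g_n=f_n-f_{n-1}\in{\rm F}$ chosen so that $|g_n|^{\gamma}\lesssim 4^{-n}|f|^{\gamma}$, extend each $g_n$ separately to $\tilde g_n$ with $|\tilde g_n|^{\gamma}\le C|g_n|^{\gamma}$, and set $\tilde f:=\sum_n\tilde g_n$. This series converges absolutely in $|\cdot|^{\gamma}$, giving simultaneously the norm bound, the $I$-Lipschitz bound, and product-continuity as a uniform limit of product-continuous functions. The point is that you apply the (nonlinear) extension operator term-by-term to a fixed telescoping decomposition, never to a difference of already-extended objects.

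Your flagged ``secondary technical issue'' about $\mathrm{Lip}_{P_k(\cO)}(\hat g)$ versus $\mathrm{Lip}_{\cO}(g)$ is a fair observation and is treated somewhat implicitly in the paper as well; it does not affect the comparison above.
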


\medskip

In the following  we mainly consider $\cI=\mathcal I(p,r)$ (see \eqref{pifferello}), however in order to use Lemma 
\ref{godiva} we need to pass to the complex.
Then we define
\begin{equation}\label{pifferone}
\cI(\C)=\cI(p,r,\C):=\{\ 
 		I\in { B}_{r^2}(\tw_{2p})\quad
 		\mbox{with}\quad I_j=0\quad \mbox{for}\quad j\in\cS^c\		\}\,,
 	\end{equation} 
	which we systematically identify 	
	with the open ball of radius $r^2$
	centered at the origin of the Banach space
	$\{w=(w_j)_{ j\in \cS}\,\, : \,  \sup_{j\in\cS}\abs{w_j} \jjap{j}^{2p}<\infty\}.$

\begin{defn}[Real and complex Hamiltonians]\label{ala}
Let $0<r_0\leq r$, $p_0\geq p>1.$
Let $\cH_{r,p}=\cH_{r,p}^{\cO\times\cI}$ with $\cI=\cI(p_0,r_0)$  be the space of parameter depending  {\it real} regular Hamiltonians 
$H\,:\, \cO\times \cI\ni (\omega,I)  \mapsto H(\omega,I)\in \scH_{r,p}$ such that
\begin{equation}\label{entusiasmo}
H_{\al,\bt}\in \cF(\cO\times\cI)\,,\ \ \forall\,  \alpha,\beta\in \cM\quad \mbox{and}\quad
\und H_\g := \sum_{\al,\bt\in \cM} |H_{\al,\bt}|^\g u^\al\bar u^\bt\,\in\,\scH_{r,p}\,.
\end{equation}
We set
\begin{equation}\label{gorma}
\| H\|_{r,p}=\| H\|_{r,p}^{\cO\times\cI}:= |\und H_\g|_{r,p}
\stackrel{\eqref{norma1}}{=}	
\frac12  \sup_j  
\sum_{(\bal,\bbt)\in \cM} \abs{H_{\bal,\bbt}}^\gamma\pa{ \bal_j + \bbt_j}u_p^{\bal + \bbt - 2e_j}
\,.
\end{equation}
Respectively for $\cI(\C)=\cI(p_0,r_0,\C)$ we define 
the space $\cH_{r,p}(\C)=\cH_{r,p}^{\cO\times\cI(\C)}$ of parameter depending  {\it complex} Hamiltonians 
$H\,:\, \cO\times \cI(\C)\ni (\omega,I)  \mapsto H(\omega,I)\in \scH_{r,p}(\C)$
satisfying \eqref{entusiasmo} with $\cI(\C)$ instead of $\cI$
and verifying  the reality condition $H(\o,I)\in \scH_{r,p}$ when $I\in\cI.$
\\
Finally we denote by
$\cH_{r,p}^0$, resp. $\cH_{r,p}^0(\C)$
the subspace of $\cH_{r,p}$, resp. of $\cH_{r,p}(\C)$,
such that $H_{|u=0}=0.$
\end{defn}

The following result is proved in Appendix \ref{appendice tecnica}.
\begin{lemma}\label{mente}
$(\cH^{0}_{r,p}, \norma{\cdot}_{r,p})$ is a Banach-Poisson algebra in the following sense:\\
1. $(\cH^{0}_{r,p}, \norma{\cdot}_{r,p})$ is a Banach space\\
2. for any $F, G \in\cH_{r + \rho, p}$ the following bound holds
\begin{equation}\label{commXHK}
	\norma{\{F,G\}}_{r,p}
	\le 
	8\max\set{1, \frac{r}{\rho} }
	\norma{F}_{r+\rho,p}\norma{G}_{r+\rho,p}\,.
\end{equation}

\end{lemma}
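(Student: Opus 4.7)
The plan is to reduce both assertions to their non-parameter analogues by exploiting two structural facts: (a) the coefficient space $(\cF(\cO\times\cI), |\cdot|^\g)$ is a Banach algebra (noted right after Definition \ref{lamai}); and (b) by formula \eqref{norma1}, the scalar norm $|\cdot|_{r,p}$ on $\scH_{r,p}$ is monotone in the absolute values of the coefficients $|H_{\alpha,\beta}|$. Given these two ingredients, completeness follows by coefficient-wise extraction and the Poisson estimate reduces to Proposition \ref{fan} via a majorant comparison.

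\textbf{Banach space structure.} Given a Cauchy sequence $\{H^{(n)}\}$ in $\cH^0_{r,p}$, the identity \eqref{gorma} shows that, for any fixed $(\alpha,\beta)\in\cM$ with $\alpha+\beta\ne 0$, the scalar sequence $\{H^{(n)}_{\alpha,\beta}\}$ is Cauchy in $(\cF(\cO\times\cI),|\cdot|^\g)$; by the Banach property of $\cF$ each admits a limit $H_{\alpha,\beta}\in\cF(\cO\times\cI)$. The reality condition \eqref{real} and the mass/momentum constraints defining $\cM$ pass to the pointwise limit, so $H := \sum_{(\alpha,\beta)\in\cM} H_{\alpha,\beta}\, u^\alpha \bar u^\beta$ is a well-defined candidate limit. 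Cauchy sequences being bounded, together with the lower semicontinuity of the Lipschitz seminorm (i.e.\ $|H_{\alpha,\beta}|^\g \le \liminf_n |H^{(n)}_{\alpha,\beta}|^\g$) and Fatou's lemma applied to the sum in \eqref{gorma}, yields $\|H\|_{r,p} \le \liminf_n \|H^{(n)}\|_{r,p}<\infty$, so $H\in\cH^0_{r,p}$. Applying the same lower semicontinuity argument to the differences $H^{(n)}-H^{(m)}$ and letting $m\to\infty$ gives $\|H^{(n)}-H\|_{r,p}\to 0$.

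\textbf{Poisson bracket estimate.} The algebra inequality $|fg|^\g \le |f|^\g|g|^\g$ in $\cF(\cO\times\cI)$ transfers to a coefficient-wise majorant bound for $\{F,G\}$. Expanding
\begin{equation*}
\{F,G\}=\im\sum_{j\in\Z}\bigl(\partial_{\bar u_j}F\,\partial_{u_j}G-\partial_{u_j}F\,\partial_{\bar u_j}G\bigr)
\end{equation*}
in monomials, each coefficient $\{F,G\}_{\mu,\nu}$ is a finite integer linear combination of products $F_{\alpha,\beta}G_{\alpha',\beta'}$. Taking $|\cdot|^\g$ of both sides and applying the Banach algebra inequality termwise yields an upper bound that is precisely the coefficient of $u^\mu\bar u^\nu$ in the majorant of the \emph{non-parametric} Poisson bracket $\{\und F_\g,\und G_\g\}$, where $\und F_\g,\und G_\g\in\scH_{r+\rho,p}$ have non-negative real coefficients. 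Hence $\und{\{F,G\}}_\g$ is dominated coefficient-wise by $\underline{\{\und F_\g,\und G_\g\}}$, and the monotonicity of $|\cdot|_{r,p}$ gives
\begin{equation*}
\|\{F,G\}\|_{r,p}=|\und{\{F,G\}}_\g|_{r,p}\le |\{\und F_\g,\und G_\g\}|_{r,p}.
\end{equation*}
Applying Proposition \ref{fan} to $\und F_\g,\und G_\g\in\scH_{r+\rho,p}$ and recalling that $|\und F_\g|_{r+\rho,p}=\|F\|_{r+\rho,p}$, $|\und G_\g|_{r+\rho,p}=\|G\|_{r+\rho,p}$ yields the factor $8\max\{1,r/\rho\}\|F\|_{r+\rho,p}\|G\|_{r+\rho,p}$, proving \eqref{commXHK}. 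The main obstacle is the bookkeeping in the coefficient-wise majorant comparison; once carefully formalized, everything collapses to reusing the scalar estimate already at hand.
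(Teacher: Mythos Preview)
Your completeness argument (Part 1) is correct and coincides with the paper's proof: coefficient-wise convergence in $\cF(\cO\times\cI)$, then Fatou/lower-semicontinuity applied to the sum in \eqref{gorma}.

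In Part 2 there is a genuine gap. After applying the triangle inequality and the algebra property of $|\cdot|^\g$ to the coefficient formula \eqref{catta}, what you obtain is
\[
|\{F,G\}_{\mu,\nu}|^\g \;\le\; \sum_j \sum_{\substack{\bal'+\bal''-e_j=\mu\\ \bbt'+\bbt''-e_j=\nu}} |F_{\bal',\bbt'}|^\g\,|G_{\bal'',\bbt''}|^\g\,(\bal'_j\bbt''_j+\bbt'_j\bal''_j)\,,
\]
with the plus sign. This is \emph{not} the $(\mu,\nu)$-coefficient of $\underline{\{\und F_\g,\und G_\g\}}$, which is
\[
\Bigl|\sum_j \sum |F_{\bal',\bbt'}|^\g\,|G_{\bal'',\bbt''}|^\g\,(\bal'_j\bbt''_j-\bbt'_j\bal''_j)\Bigr|
\]
with the minus sign; the latter can be strictly smaller due to cancellations. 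Hence the claimed coefficient-wise domination $\und{\{F,G\}}_\g \preceq \underline{\{\und F_\g,\und G_\g\}}$ is unjustified, and you cannot simply invoke the \emph{statement} of Proposition \ref{fan} on $\und F_\g,\und G_\g$.

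The fix is exactly what the paper does: rather than comparing with $\{\und F_\g,\und G_\g\}$, observe that the \emph{proof} of Proposition \ref{fan} (from \eqref{frisa} onward) never uses the coefficients of $F,G$ except through the majorized expression $\sum_j\sum(\bal'_j\bbt''_j+\bbt'_j\bal''_j)|F_{\bal',\bbt'}||G_{\bal'',\bbt''}|$. Replacing $|F_{\bal',\bbt'}|,|G_{\bal'',\bbt''}|$ by $|F_{\bal',\bbt'}|^\g,|G_{\bal'',\bbt''}|^\g$ in that computation yields \eqref{commXHK} verbatim. In other words, repeat the proof of Proposition \ref{fan} with $|\cdot|^\g$ in place of $|\cdot|$; do not pass through the Poisson bracket of the majorants.
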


\begin{prop}[Monotonicity]\label{che testicoli}
The norm $\|\cdot\|_{r,p}$  is monotone decreasing  in $p$ and monotone increasing in $r$: 
\begin{equation}\label{che palle}
 \|\cdot\|_{r,p+\delta}\leq \|\cdot\|_{r + \rho,p} \quad \forall\, \rho, \delta\geq 0.
 \end{equation}
 \end{prop}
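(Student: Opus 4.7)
The plan is to derive both monotonicities directly from the explicit formula \eqref{gorma} by analyzing each term of the double sum separately. It is convenient to rewrite
$$u_p^{\bal+\bbt-2e_j}(r) = r^{|\bal|+|\bbt|-2}\,\jjap{j}^{2p} \prod_{k\in\Z} \jjap{k}^{-p(\alpha_k+\beta_k)},$$
which cleanly separates the $r$- and $p$-dependencies. A term contributes to the $j$-th summand in \eqref{gorma} only if $\alpha_j+\beta_j\ge 1$; combined with the mass constraint $|\bal|=|\bbt|$ in \eqref{zorro} this forces $|\bal|+|\bbt|\ge 2$, so the $r$-exponent is non-negative. Since the $\jjap{k}$-factors are independent of $r$, each term is non-decreasing in $r$, yielding $\|H\|_{r,p}\le \|H\|_{r+\rho,p}$.

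The monotonicity in $p$ reduces, term by term, to the key pointwise inequality
\begin{equation}\label{pino}
\jjap{j}^2 \,\le\, \prod_{k\in\Z} \jjap{k}^{\alpha_k+\beta_k}\,,
\end{equation}
for every $(\bal,\bbt)\in\cM$ with $\alpha_j+\beta_j\ge 1$. The easy case is $\alpha_j+\beta_j\ge 2$, where \eqref{pino} is immediate by isolating the $j$-th factor. The delicate case is $\alpha_j+\beta_j=1$: here mass conservation $|\bal|=|\bbt|$ produces some $k^*\neq j$ with $\alpha_{k^*}+\beta_{k^*}\ge 1$, so that $\prod_{k\neq j}\jjap{k}^{\alpha_k+\beta_k}\ge 2$, and the goal reduces to $\jjap{j}\le \prod_{k\neq j}\jjap{k}^{\alpha_k+\beta_k}$.

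To close this case I would first establish, by induction on $N=\sum_k a_k$, the elementary inequality
$$\sum_{k} a_k x_k \,\le\, \prod_{k} x_k^{a_k} \qquad \text{for all } a_k\in\N,\ N\ge 1,\ x_k\ge 2,$$
the inductive step using that $\prod x_k^{a_k}\ge 2$ whenever $N\ge 1$. Applying this with $a_k=\alpha_k+\beta_k$ and $x_k=\jjap{k}$ (restricted to $k\neq j$), together with momentum conservation $\sum_k k(\alpha_k-\beta_k)=0$, which gives
$$|j| \,=\, \Big|\sum_{k\neq j} k(\beta_k-\alpha_k)\Big| \,\le\, \sum_{k\neq j}|k|(\alpha_k+\beta_k)\,\le\,\sum_{k\neq j}\jjap{k}(\alpha_k+\beta_k),$$
one concludes $|j|\le \prod_{k\neq j}\jjap{k}^{\alpha_k+\beta_k}$. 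Since the product is $\ge 2$, this upgrades to $\jjap{j}=\max\{2,|j|\}\le \prod_{k\neq j}\jjap{k}^{\alpha_k+\beta_k}$, proving \eqref{pino}. Combining with the $r$-monotonicity gives $\|H\|_{r,p+\delta}\le \|H\|_{r,p}\le \|H\|_{r+\rho,p}$.

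The main obstacle is precisely \eqref{pino} when $\alpha_j+\beta_j=1$: both conservation laws (mass and momentum) are indispensable, and the choice of weight $\jjap{k}=\max\{2,|k|\}$ rather than the more standard $\langle k\rangle=\max\{1,|k|\}$ is essential so that the auxiliary inductive inequality is not an equality in the base case and the product $\prod_{k\neq j}\jjap{k}^{\alpha_k+\beta_k}$ is automatically at least $2$. This explains the technical footnote after \eqref{pecoreccio}.
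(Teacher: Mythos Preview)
Your proof is correct and follows essentially the same strategy as the one in \cite[Proposition 6.3]{BMP:2019} to which the paper defers: the $r$-monotonicity is immediate from mass conservation (giving $|\bal|+|\bbt|\ge 2$ on contributing terms), while the $p$-monotonicity reduces to the pointwise inequality $\jjap{j}^2\le\prod_k\jjap{k}^{\bal_k+\bbt_k}$, proved via momentum conservation and the elementary sum-vs-product bound for numbers $\ge 2$. Your observation that the weight $\jjap{j}=\max\{2,|j|\}$ (rather than $\max\{1,|j|\}$) is what makes the argument close is exactly the point of the footnote after \eqref{pecoreccio}.
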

The fact that this norm is increasing in $r$ follows directly from mass conservation and the fact that $H(0)=0$. Concerning the monotonicity in $p$, we refer the reader to \cite[Proposition 6.3]{BMP:2019}, where the proof is contained, written in the case of $| \cdot |_{r,p}$. The fact that it holds also in the Lipschitz frame, follows trivially.

\begin{prop}[Hamiltonian flow]\label{ham flow}
	Let $S\in\cH_{r+\rho, p}=\cH_{r+\rho, p}^{\cO\times\cI}$ with 
	\begin{equation}\label{stima generatrice}
	\norma{S}_{r+\rho,p} \leq\delta:= \frac{\rho}{16 e\pa{r+\rho}}. 
	\end{equation} 
	Then, for all $(\o,I)\in\cO\times\cI$ the time $1$-Hamiltonian flow
	of $S=S(\cdot,\o,I)$   is well defined, analytic, symplectic; more precisely 
	$\Phi^1_{S}: B_r(\tw_p)\to
	B_{r + \rho}(\tw_p)$ with
	\begin{equation}
	\label{pollon}
	\sup_{u\in  B_r(\tw_p)} 	\norm{\Phi^1_{S}(u)-u}_{r,p}
	\le
	(r+\rho)  \norma{S}_{r+\rho, p}
	\leq
	\frac{\rho}{16 e}.
	\end{equation}
	For any $H\in \cH_{r+\rho, p}$
	we have that\footnote{$e^{\set{S,\cdot}} H:=\sum_{k\in\N}\ad^k_S\pa{H}/k!$,
	where $\ad_S=\{S,\cdot\}$.}
	$H\circ\Phi^1_S= e^{\set{S,\cdot}} H\in\cH_{r, p}$,
	$e^{\set{S,\cdot}} H-H \in\cH_{r, p}^{0}$
	 and
	\begin{align}
	\label{tizio}
	\norma{\es H}_{r, p}& \le 2 \norma{H}_{r+\rho, p}\,,
	\\
	\label{caio}
	\norma{\pa{\es - \id}H}_{r, p}
	&\le  \delta^{-1}
	\norma{S}_{r+\rho, p}
	\norma{H}_{r+\rho, p}\,,
	\\
	\label{sempronio}
	\norma{\pa{\es - \id - \set{S,\cdot}}H}_{r, p} &\le 
	\frac12 \delta^{-2}
	\pa{\norma{S}_{r+\rho, p}}^2
	\norma{H}_{r+\rho, p}.
		\end{align}
	More generally for any $h\in\N$ and any sequence  $(c_k)_{k\in\N}$ with $| c_k|\leq 1/k!$, we have 
	\begin{equation}\label{brubeck}
	\norma{\sum_{k\geq h} c_k \ad^k_S\pa{H}}_{r, p} \le 
	2 \norma{H}_{r+\rho, p} \big(\norma{S}_{r+\rho, p}/2\delta\big)^h
	\,,
	\end{equation}
	where  $\ad_S\pa{\cdot}:= \set{S,\cdot}$.
\end{prop}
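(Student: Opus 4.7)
The plan is to control the Lie series $\es H = \sum_{k\ge 0} \ad_S^k(H)/k!$ by iterating the Poisson-bracket estimate \eqref{commXHK}, and then to identify this series with $H\circ \Phi^1_S$. First, for the flow itself: by the definition of $|\cdot|_{r+\rho,p}$ together with the componentwise inequality $|X_H|\preceq X_{\underline H}$, the vector field $X_S$ is analytic on $B_{r+\rho}(\tw_p)$ and satisfies
\begin{equation*}
\sup_{\|u\|_p\le r+\rho}\|X_S(u)\|_p \;\le\; (r+\rho)\|S\|_{r+\rho,p}\;\le\; \rho/(16e).
\end{equation*}
A standard Picard-type a priori argument then produces the time-$1$ flow of $S$ on $B_r(\tw_p)$, keeps it inside $B_{r+\rho}(\tw_p)$, and yields the displacement bound \eqref{pollon}; symplecticity is automatic.

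The heart of the argument is an iterated Poisson-bracket bound. For fixed $k\ge 1$, interpolate radii $r_j := r + j\rho/k$, $j=0,\ldots,k$; applying \eqref{commXHK} $k$ times with gap $\sigma=\rho/k$, combined with the monotonicity from Proposition \ref{che testicoli} (so that each $\|S\|_{r_j,p}\le \|S\|_{r+\rho,p}$), and noting that $\max\{1,r_j/(\rho/k)\}\le (r+\rho)k/\rho$ for $k\ge 1$, yields
\begin{equation*}
\|\ad_S^k(H)\|_{r,p}\;\le\; \bigl(8(r+\rho)k/\rho\bigr)^k \|S\|_{r+\rho,p}^k\, \|H\|_{r+\rho,p}.
\end{equation*}
Since, by the definition \eqref{stima generatrice} of $\delta$, one has $8e(r+\rho)/\rho = 1/(2\delta)$, and since $k^k\le e^k k!$, this becomes the workhorse estimate
\begin{equation*}
\frac{\|\ad_S^k(H)\|_{r,p}}{k!}\;\le\; \bigl(\|S\|_{r+\rho,p}/(2\delta)\bigr)^k \|H\|_{r+\rho,p}.
\end{equation*}

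All four Hamiltonian bounds then follow by geometric summation. The hypothesis \eqref{stima generatrice} forces $\|S\|_{r+\rho,p}/(2\delta)\le 1/2$, so summing from $k=0$ (using monotonicity for the zeroth term) gives \eqref{tizio}; factoring one power of $\|S\|_{r+\rho,p}/\delta$ out of the sum from $k\ge 1$ gives \eqref{caio}; an analogous computation from $k\ge 2$ produces \eqref{sempronio}; and the same tail estimate with a general sequence $|c_k|\le 1/k!$ starting at $k\ge h$ produces \eqref{brubeck}, with the factor $2$ absorbing the geometric sum $\sum_{j\ge 0}(1/2)^j=2$. The identity $H\circ \Phi^1_S = \es H$ comes from the classical fact that $\tfrac{d^k}{dt^k}(H\circ\Phi^t_S)|_{t=0}=\ad_S^k(H)$, made rigorous in $\cH_{r,p}$ by the uniform convergence established above; this simultaneously proves membership in $\cH_{r,p}$ (and in $\cH_{r,p}^0$ once the $k=0$ term is subtracted). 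The main obstacle is precisely the book-keeping that balances the combinatorial factor $k^k$, arising from splitting the radius $\rho$ into $k$ equal slices in \eqref{commXHK}, against the factorial $k!$ in the Lie series: the smallness threshold $\delta\sim \rho/(e(r+\rho))$ is chosen exactly so that $k^k/(k!)\le e^k$ converts this into geometric convergence.
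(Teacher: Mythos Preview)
Your proof is correct and follows precisely the approach the paper has in mind: the paper's proof consists only of the sentence ``The proof is based on \eqref{commXHK} and on the Lie series expansion for $\es$, see \cite{BMP:2019} Proposition 2.1 and Lemma 2.1 for details,'' and your argument is exactly the standard telescoping of radii $r_j=r+j\rho/k$ combined with $k^k\le e^k k!$ that those references carry out. The arithmetic (in particular $8e(r+\rho)/\rho=1/(2\delta)$ and the geometric tail sums giving the constants in \eqref{tizio}--\eqref{brubeck}) checks out.
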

The proof  is based on \eqref{commXHK} and on the Lie series expansion for 
$\es$,
see \cite{BMP:2019} Proposition 2.1 and Lemma 2.1 for details.

\begin{rmk}\label{cotechino}
If we are working in $\cH_{r, p}(\C)$
all the estimates in
Lemma \ref{mente},
Proposition \ref{che testicoli} and Proposition \ref{ham flow}
still hold except for \eqref{pollon} which holds only for real $I$.
Indeed without the reality condition \eqref{real}
the generating function $S$ does not define anymore a Hamiltonian flow satisfying
the reality condition $\overline{u(t)}=\bar u (t).$
\end{rmk}

\section{Degree decompositions, projections and normal forms}\label{proiezioni}

{We want to prove that, in suitable variables,  $\cT_I$ introduced in \eqref{pippero} is an
 invariant torus on which the flow is linear. To this purpose  we introduce 
 a suitable degree decomposition, whose
main idea  is to make a {``power series expansion" } in the variables $|v|^2 - I$ and $z$, which control the distance to the torus $\cT_I$,  in order to highlight the terms which prevent $\cT_I$ from being a KAM torus. In the case of finite dimensional tori, one typically introduces action-angle variables but it is well known that this produces a singularity at $I = 0$. In the infinite dimensional case this should be avoided and requires some care.  An effective strategy is the expansion described below, first introduced in \cite{Bourgain:2005} and 
further formalized in \cite[Section 4]{BMP:almost}.
For convenience of the reader, we sketch here the main features of this decomposition.
The main novelty here is to control  the regularity with respect to the parameters.

\noindent
Fix $\cS$ as in \eqref{orey}. Consider a Hamiltonian $H(u)$ expanded
in Taylor series at $u=0$ and  tautologically rewrite $H$ as 
	\begin{equation}\label{vitello}
H= \sum_{\substack{m,\al,\bt\in \N^\cS\\ \al\cap \bt= \emptyset\\ a,b\in \N^{\cS^c}}} H_{m,\al,\bt,a,b}|\ub|^{2m}\ub^\al\bar \ub^\bt \zb^a\bar \zb^b
\end{equation}
	where, by slight abuse of notation\footnote{Consisting in a reordering of the indexes $j$.},  $u=(v,z)$ with $\ub= \pa{v_j}_{j\in \cS}:=\pa{u_j}_{j\in \cS}$
and 
$\zb=\pa{\zb_j}_{j\in \cS^c}:=\pa{u_j}_{j\in \cS^c}$.
Moreover by $\al\cap \bt= \emptyset$, here and in the following,
we mean that $\al_j\neq 0$ implies $\bt_j=0$.
\\
Then introduce the auxiliary ``action'' variables
 $w=(w_j)_{ j\in \cS}$ 
 substituting 
$|v|^{2m} v^\al \bar v^\bt z^a \bar z^b \rightsquigarrow w^m v^\al \bar v^\bt z^a\bar z^b$ in \eqref{vitello}.
Now we  Taylor expand the Hamiltonian
 with respect to $w$ and $z$ at the point
$w_j=I_j$ for $j\in \cS$ and $z=0$ respectively.

\begin{defn}[Degree decomposition] Let $I\in \mathcal I(p,r)$ (recall \eqref{pifferello}). For every integer $d\ge -2$ and any regular Hamiltonian $H\in\scH_{r,p}$ we define the  projection $(\Pi^d H)(u)=(\Pi^d_I H)(u)=H^{(d)} (u)$ defined as

	\begin{equation}\label{grado vero}
	  H^{(d)} (u):= \!\!\!\!\!\sum_{\substack{m,\al,\bt,\delta\in \N^\cS, a,b\in\N^{\cS^c}\\\al\cap \bt= \emptyset\,,\; \delta\preceq m \\ 2|\delta|+|a|+|b|= d+2} }  \!\!\!\!\!{H}_{m,\al,\bt,a,b}
	\binom{m}{\delta} I^{m-\delta} (|\ub|^2- I)^\delta \ub^\al \bar \ub^\bt \zb^a \bar \zb^b\,,
	\end{equation}
	where $\delta \preceq m$ means that $\delta_s \leq m_s$ for any $s\in\cS$, $|v|^2 =\pa{|v_s|^2}_{s\in\cS}$, while the multi-index notations are introduced in Definition \ref{baccala}.
	We also set $\Pi^{\leq d}:=\sum_{d'=-2}^{d}\Pi^{d'}$ and $\Pi^{\geq d}:={\rm Id} - \Pi^{< d}$, 
	analogously for $\Pi^{< d}$
	and
	$\Pi^{> d}$.
	Moreover we define, e.g.,
	$H^{(\le d)}:=\Pi^{(\le d)}H$
	and also
	 $\scH_{r,p}^{d}:=\Pi^{d}\scH_{r,p}$ and, e.g., $\scH_{r,p}^{\leq d}:=\Pi^{\leq d}\scH_{r,p}$. 
\end{defn}
Note that, if $\cS=\Z$, projections coincide with the ones of Section 4 of \cite{BMP:almost}, while if $\cS= \emptyset$, $H^{(d)}$ represents the usual homogeneous degree at $\zb=0$.
\\
In this way, given $H\in\scH_{r,p}$, then 
\begin{equation}\label{semicroma}
H = H^{(\le 0)} + H^{(\ge 1)} \equiv H^{(-2)} + H^{(-1)} + H^{(0)} + H^{(\ge 1)}  
\end{equation}
 where $H^{(-2)}$ consists of terms which are constant w.r.t. both $z$ and and the "auxiliary action" $w = \abs{v}^2$, $H^{(-1)}$ is independent of the action but linear in the $z_j$, while  $H^{(0)}$ contributes with two terms: the one linear in the action and independent of $z$, the second one quadratic in $z$ and independent of the action. Finally, $H^{(\ge 1)} $ is what is left and $X_{H^{(\ge 1)}}$ vanishes on $\cT_I$. 
 \smallskip
 
 The operators $\Pi^d$ define continuous projections (see
 Section 4 of \cite{BMP:almost} and also
  Proposition \ref{proiettotutto}) satisfying $\Pi^d \Pi^d = \Pi^d $ and
	$\Pi^{d'} \Pi^d=\Pi^d \Pi^{d'} =0$
	for every $d'\neq d$,  $d'\geq -2.$ Moreover, this decomposition enjoys all the crucial properties required for a KAM scheme to converge, in particular they behave well with respect to Poisson brackets, that is: 
	$$
	\forall F, G \in\scH_{r,p}\quad  \set{F, G^{\ge 1}}^{(-2)} = 0 
	$$
and 	
\begin{equation*}
\label{woodstok}
 \begin{aligned}
  \quad F^{(-2)}=0 \, \Longrightarrow \, \set{F, G^{\ge 1}}^{(\leq -1)} = 0, \\
   \quad F^{( -1)} = 0 = F^{ (-2)} \, \Longrightarrow  \set{F, G^{\ge 1}}^{(\le 0)} = 0.
 \end{aligned}
\end{equation*}
Note also that
if $\Pi^{< d_1}F = \Pi^{< d_2} G=0$, then 
	$
	 \Pi^{< d_1+d_2}\set{F,G}=0\,.
$
For all the properties of the projections see \cite{BMP:almost} Proposition 4.1 and 4.2.

 As is standard, on $\scH_{r,p}$ we define the projections
\begin{equation}
\label{ragno}
\Pi^\cK H := \sum_{\bal\in\N^\Z}H_{\bal,\bal}|u|^{2\bal}\,,\qquad \Pi^\cR H := H- \Pi^\cK H\,,
\end{equation}
which  are continuous on $\scH_{r,p}$. \\
Correspondingly, we define the following subspaces of $\scH_{r,p}$:
\begin{equation}\label{dolcenera}
\scH_{r,p}^\cK:=\{ H\in \scH_{r,p}\,:\quad \Pi^\cK H= H\}
\,,\qquad
\scH_{r,p}^\cR:=\{ H\in \scH_{r,p}\,:\quad \Pi^\cR H= H\}
 \,.
\end{equation}
Moreover, e.g. $\scH_{r,p}^{\leq 0, \cR}:=
\scH_{r,p}^{\leq 0}\cap \scH_{r,p}^{\cR}.$
Note that $\scH_{r,p}^\cK\subseteq \ker L_\omega$ and  if $\omega\in \dg$ then  the two spaces coincide.
\\
Note that by \eqref{grado vero} and \eqref{ragno}
we have
\begin{equation}\label{scalogno}
d \ \ \mbox{odd}\quad\Longrightarrow\quad
\scH_{r,p}^{d,\cK}=\{0\}\,.
\end{equation}
In  Lemma 4.3 of \cite{BMP:almost}  we proved that 
the map $\lambda\to \Lambda$
defined by
\begin{equation}\label{silvestre}
	\Lambda = \sum_{j\in \cS} \lambda_j( |v_j|^2 -I_j)	 + \sum_{j\in\cS^c} \lambda_j |z_j|^2
		\end{equation}
	is a linear isometry from $\ell^\infty$ to
$\scH_{r,p}^{0,\cK}$
for every $r,p$.
Note that, taking $\Lambda$ as above 
\begin{equation}\label{creep}
H=\set{\Lambda,G^{(d)}}\qquad
\Longrightarrow
\qquad
H=H^{( d)}\,.
\end{equation}

\medskip
The projections defined in \eqref{grado vero}
naturally extend to
$H\in\cH_{r,p}$ or $H\in\cH_{r,p}(\C)$
setting
\begin{equation}\label{grado falso}
(\Pi^d H) (u,\o,I):=(\Pi^d_I H(\o,I))(u)\,.
\end{equation}
Similarly for \eqref{ragno}.

\noindent
The following result  is proved in Appendix \ref{appendice tecnica}.

\begin{prop}\label{proiettotutto}
	For every  $d\in\N\cup\{-2,-1\}$ 
	 and $H\in \cH_{r',p}=\cH_{r',p}^{\cO\times\cI}$
	where $\cI=\mathcal I(p,r)$
	with
$r'\geq \sqrt 2 r$
 the following holds.
\\
(i)	 The projection operators
	$\Pi^d: \cH_{r',p}\to \cH_{r',p}$ are continuous with bound  
	\begin{equation}\label{cacioepepe}
\|\Pi^d H \|_{ r',p}
\leq 
3^{\frac{d}{2}+1}
\|H\|_{r',p}\,.
\end{equation}
(ii) Moreover
 the following representation formula holds
\begin{equation}\label{rappre}
\Pi^{\ge d} H(u) = \sum_{\substack{\delta\in \N^\cS, a,b\in\N^{\cS^c}\\ 2|\delta|+|a|+|b|= d+2} } 
 (|\ub|^2-I)^\delta \zb^a \bar \zb^b\check{H}_\delta(u)
\qquad\mbox{for}\ \ |u|_p< r'\,,
\end{equation}
where $\check{H}_\delta(u)$ are analytic  in ${B}_{r'}(\tw_p)$ and can be written in totally convergent power series in every ball
$|u|_p\leq \kappa_* r'$ with $\kappa_*< 1$.
\\
Analogous statements hold for the complex case  $H\in \cH_{r',p}(\C)=\cH_{r',p}^{\cO\times\cI(\C)}$
	where $\cI(\C)=\mathcal I(p,r,\C)$ (recall formula \eqref{pifferone}).
\end{prop}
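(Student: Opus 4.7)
The strategy is to reduce both (i) and (ii) to their non-parameter-dependent counterparts, already established in \cite[Section 4, Propositions 4.1--4.2]{BMP:almost}, via the majorant formulation of the norm $\|\cdot\|_{r,p}$. The crucial structural observation is that in \eqref{grado vero}--\eqref{grado falso} the projection $\Pi^d$ acts linearly on the Taylor coefficients $H_{\bal,\bbt}$ through multipliers that are polynomial in $I$ and entirely independent of $\omega$. Consequently $\Pi^d$ commutes with the Lipschitz difference $\Delta_{\omega,\omega'}$ of \eqref{deltab}, so taking the majorant $\und{(\cdot)}_\gamma$ from \eqref{entusiasmo}--\eqref{gorma} and applying $\Pi^d$ commute up to replacing the coefficients $H_{\bal,\bbt}$ by $|H_{\bal,\bbt}|^\gamma$. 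Both statements therefore reduce to the analogous statements for elements of $\scH_{r',p}$.

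For (i), by \eqref{norma1} one must bound $\sum_{\bal,\bbt}|(\Pi^d H)_{\bal,\bbt}|(\bal_j+\bbt_j)u_p^{\bal+\bbt-2e_j}$. Expanding $(|v|^2-I)^\delta = \sum_{k \preceq \delta}\binom{\delta}{k}(-I)^{\delta-k}|v|^{2k}$ in \eqref{grado vero} and taking absolute values, every Taylor coefficient of $\Pi^d H$ becomes a positive combination of Taylor coefficients of $H$ with weights $\binom{m}{\delta}\binom{\delta}{k}|I|^{m-k}$. The assumption $I \in \cI(p,r)$ provides $|I_s| \leq r^2\jjap{s}^{-2p}$, which combined with $r' \geq \sqrt{2}\,r$ allows each $|I_s|$ to be absorbed into half of $u_{p,s}(r')^2$. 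The remaining geometric series in $m-\delta$ converges, and a combinatorial count of the number of monomial patterns of total degree $d+2$ distributed among the three slot types (action, $z$-power, $\bar z$-power) yields the factor $3^{d/2+1}$, exactly as in \cite[Proposition 4.1]{BMP:almost}.

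For (ii), I would interchange the order of summation in \eqref{grado vero} summed over $d' \geq d$ and group terms by the tuple $(\delta,a,b)$ satisfying $2|\delta|+|a|+|b|\geq d+2$. The resulting generating function
\[
\check H_\delta(u) := \sum_{\substack{m,\al,\bt \in \N^\cS \\ \al\cap\bt=\emptyset \\ \delta \preceq m}}H_{m,\al,\bt,a,b}\binom{m}{\delta}I^{m-\delta}v^\al\bar v^\bt
\]
(with $(a,b)$ determined by the outer grouping) is analytic on $B_{r'}(\tw_p)$, and the bound from (i) applied to the rescaled Hamiltonian $u \mapsto H(\kappa_* u)$ produces the geometric factor needed to obtain total convergence on any sub-ball $\|u\|_p \leq \kappa_* r' < r'$. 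The complex case in $\cH_{r',p}(\C)$ is identical: neither the majorant bounds nor the algebraic rearrangement make any use of the reality condition \eqref{real}.

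The principal obstacle is the combinatorial bookkeeping in (i): the binomial cross-terms produced by expanding $(|v|^2-I)^\delta$ interact non-trivially with the sum over $m \succeq \delta$, and absorbing each $|I_s|$-factor into $(u_{p,s}(r'))^2$ requires precisely the scale gap $r' \geq \sqrt{2}\,r$ built into the hypothesis. Once this is handled, the passage from the scalar setting to the $\gamma$-weighted Lipschitz setting is automatic, thanks to the majorant commutation noted at the outset.
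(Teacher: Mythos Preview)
Your approach is correct and essentially coincides with the paper's. Both reduce the norm bound \eqref{cacioepepe} and the representation \eqref{rappre} to Propositions~4.1--4.2 of \cite{BMP:almost}, and both handle the parameter dependence by observing that the $\omega$-independence of the projection multipliers lets one pass to majorants. The paper's proof is terser: it simply invokes \cite{BMP:almost} for (i)--(ii) and then isolates as the \emph{only new content} the verification that $\Pi^d H$ is analytic in $I\in\cI$, which it obtains by exhibiting a uniform (in $u,\omega,I$) majorant $Y\in\tw_p$ for the vector field $X_{\Pi^d H}$ and invoking total convergence. You recover this same conclusion implicitly through your majorant bound, but you do not flag analyticity in $I$ as the point requiring separate justification---and it does, since although each individual input coefficient $H_{m,\al,\bt,a,b}$ enters with a genuinely polynomial-in-$I$ multiplier, each \emph{output} coefficient $(\Pi^d H)_{\bal,\bbt}$ is an infinite sum over $m\succeq\delta$ and is therefore a power series in $I$ whose convergence (hence analyticity) must be checked. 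Your estimate $|I_s|\le r^2\jjap{s}^{-2p}\le\tfrac12 u_{p,s}(r')^2$ is exactly what does this, so nothing is missing; it is only that the emphasis differs from the paper's.
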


As above for $\scH_{r,p}$ we define the corresponding subspaces 
$\cH_{r,p}^d,$ $\cH_{r,p}^\cK$, $\cH_{r,p}^\cR$, etc.
of $\cH_{r,p}.$ Analogously for the complex case $\cH_{r,p}(\C)$.
In  particular we discuss the subspace $\cH_{r,p}^{0,\cK}$.

\begin{defn}
We denote by $\cH^{0,\cK}=\cH^{0,\cK}(\cO\times \cI)$ the space of maps 
$$
\cO\times \cI \ni (\o,I )\to \lambda(\o,I)\in \ell^\infty(\R)\quad\qquad \mbox{with}\quad\lambda_j\in \cF\pa{\cO\times \cI} \quad \forall j\in\Z,
$$
endowed with the norm (recall \eqref{solenoide})
\begin{equation}
	\label{grevity}
	\|\lambda\|_\infty:=\sup_{j\in\Z}|\lambda_j|^\gamma\,.
\end{equation} 
Then by \eqref{silvestre}
$\cH^{0,\cK}$ is isometrically equivalent to $\cH_{r,p}^{0,\cK}$ for all $r,p.$
Similarly in the complex case $\cH^{0,\cK}(\C)=\cH^{0,\cK}(\cO\times \cI(\C)).$
\end{defn}

\noindent
 Since are we mainly interested in the decomposition \eqref{semicroma}, we remark  (see Proposition \ref{proiettotutto})  that for  $H\in \cH_{r',p'}^{\cO\times\cI}$ with
 $\cI=\mathcal I(p,r)$ and
$r'\geq \sqrt 2 r\,,\  p'\leq p$, one has 
\begin{equation}\label{ritornello}
\begin{aligned}
&\|\Pi^0 H\|_{r',p'}\,,\ \| \Pi^{0,\cK} H\|_\infty \leq  3  \|H\|_{r',p'}
\,, \\&
\|\Pi^{-1} H\|_{r',p'},\|\Pi^{-2} H\|_{r',p'}  \leq   \|H\|_{r',p'}\,,\quad
\|\Pi^{\geq 1} H\|_{r',p'}  \leq  6  \|H\|_{r',p'}\,.
\end{aligned}
\end{equation}
Analogous estimates hold in the complex case
$H\in \cH_{r',p'}^{\cO\times\cI(\C)}$ with
 $\cI(\C)=\mathcal I(p,r,\C)$ (see \eqref{pifferone}).

\section{Proof of Theorems  \ref{toro Sobolevp} and \ref{torobolev}}\label{cinque}

\noindent
\begin{defn}[Normal Forms] {For $\omega\in\cQ$ let\footnote{Note that $D$ is a linear map into the space of formal quadratic polynomials. } 
 \begin{equation*}
 	\label{uffa}
D: \omega \quad \mapsto \quad D(\o):= \sum_{j\in\Z} \omega_j |u_j|^2\,.
 \end{equation*}}
 Let 
 $0<r<r_0$, $1<p_0\leq p$ and $\cI=\mathcal I(p,r)$,
  as in \eqref{pifferello}.
 We will say that    $N$ is an analytic family of normal forms   if 
$N-D\in  \cH_{r_0,p_0}^{{\ge 1},\cO\times \cI}$. We denote such affine subspace by $\cN_{r_0,p_0}= \cN_{r_0,p_0}^{\cO\times\cI}$. The  same definition holds in the corresponding complex spaces.

 \end{defn}

\begin{rmk}
Given $N\in \cN_{r_0, p_0}^{\cO\times\cI}$  for every $\o\in\cO$ and $I\in \cI(p,r)$,  the flat torus $\cT_I$ is invariant for the dynamics of
$N(\omega,I)$.
\end{rmk}

\noindent

Let us state our counter-term KAM result.
\begin{them}
	\label{allaMoserbis} 
Fix  $0<\g<1$, $\cS$   as in \eqref{orey}, $r>0$ and $p>1$. Set  $\cO=\dg$ and $\cI= \mathcal I(p,r)$. Consider 
$r_0, \rho, \delta>0$ and $p_0>1$ with \begin{equation}
\label{newton}
\rho\le \frac{r_0-r}{4}\,,\quad r \le \frac{r_0}{2}\,,\quad
\delta\leq\frac{p-p_0}{4}\,.
\end{equation}
There exists $\bar{\epsilon},1/\bar C>0$, decreasing functions of
$
\rho/r_0$ 
and
$\delta$ 
such that the following holds. 
Consider 
  $N_0\in\cN_{r_0, p_0 }^{\cO\times\cI}$,
    $H-D\in \cH_{r_0,p_0}^{\cO\times\cI}$ and assume that
\begin{align}\label{maipendanti}
& (1+\Theta)^5 \epsilon \le \bar{\epsilon} \quad \mbox{where}
\\
& \epsilon:=\g^{-1}\sup_{I\in\mathcal I(p,r)}\norma{H- N_0}_{r_0 ,p_0 }\,,
\quad \Theta := 
\g^{-1}\sup_{I\in\mathcal I(p,r)}\norma{D- N_0}_{r_0 ,p_0}\,.\nonumber
\end{align}
Then there exist
a $\rho/4$-close to the identity  symplectic diffeomorphism in $u$,
Lipschitz in $\omega$ and analytic in $I$
\begin{equation}\label{pocafede}
	\Psi:{B}_{r_0-\rho}\pa{\tw_{p}}\times \cO\times \cI
	\to
	{B}_{r_0}\pa{\tw_{p}} \,, \quad (u;\omega,I)\mapsto \Psi(u;\omega,I)
	\end{equation}
\noindent and a counter-term $\Lambda\in \cH^{0,\cK}(\cO\times\cI)$, where
\begin{equation}\label{loredana}
\Lambda = \sum_{j}\lambda_j \pa{\modi{j} - I_j}\,, \quad  \|\lambda\|_\infty
 \le \bar C \g   (1+\Theta)^2\epsilon\,,
\end{equation}
such that   $\Psi(\cT_I;\o,I)$ is an invariant torus for the dynamics of
$\Lambda(\o,I)+H(\o)$ for every $\o\in\dg$ and $I\in\mathcal I(p,r).$ 
\\
More precisely there exists 
an analytic family of normal forms $N \in\cN_{r_0-\rho,p_0 + \delta}^{\cO\times\cI}$, such that
\begin{equation}\label{coniugio}
\pa{\Lambda + H}\circ \Psi - N=0\,.
\end{equation}
Finally if $N_0$ and $H$ admit a complex extension on $\cI(p,r,\C)$ satisfying \eqref{maipendanti}, then also the counter-term $\lambda$ extends to an holomorphic map in $\cH^{0,\cK}(\cO\times \cI(p,r,\C))$ satisfying \eqref{loredana}.

\end{them}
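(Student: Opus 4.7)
The plan is to run an iterative Newton-like KAM scheme with counter-terms, in the spirit of Rüssmann--Herman and following the general strategy of \cite{BMP:almost} adapted to the Sobolev setting. Starting from $N_0$, $\Lambda_0 := 0$ and $P_0 := H - N_0$, at the $n$-th step one has a Hamiltonian $N_n + P_n$ with $N_n \in \cN_{r_n, p_n}^{\cO\times\cI}$ and $\epsilon_n := \gamma^{-1}\|P_n\|_{r_n, p_n} \to 0$ quadratically. The counter-term will be built as the convergent series $\Lambda = \sum_{n} \Lambda_n$ and the symplectic change of variables as $\Psi := \lim_{n\to\infty} \Phi^1_{S_1} \circ \cdots \circ \Phi^1_{S_n}$, with each $S_n$ the generating function produced at step $n$.

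For the single KAM step, given $N + P$ with $\gamma^{-1}\|P\|_{r,p} = \epsilon$, I would decompose $P = \Pi^{\le 0} P + \Pi^{\ge 1} P$ as in \eqref{semicroma}. The $\ge 1$ part has vector field vanishing on $\cT_I$ and is absorbed into the new normal form. To eliminate $\Pi^{\le 0} P$, I would split it further as $\Pi^{0, \cK} P + \Pi^{\le 0, \cR} P$ via \eqref{ragno} (using $\Pi^{-1, \cK} = \Pi^{-2, \cK} = 0$ by \eqref{scalogno}); the counter-term increment is then $\Lambda_{n+1} := -\Pi^{0, \cK} P$, which has the form \eqref{silvestre} by Lemma 4.3 of \cite{BMP:almost} and satisfies $\|\lambda_{n+1}\|_\infty \lesssim \|P\|_{r,p}$ via \eqref{ritornello}. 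The generating function $S$ is then obtained by solving the homological equation $L_\omega S = \Pi^{\le 0, \cR} P$, whose formal solution $S_{\bal,\bbt} = -\im F_{\bal,\bbt}/(\omega \cdot (\bal - \bbt))$ makes sense because, on the support of $F := \Pi^{\le 0, \cR} P$, the multi-index $\bal - \bbt$ has at most two non-zero entries in $\cS^c$ and satisfies the zero mass/momentum constraints --- exactly the regime covered by \eqref{diofantino nuc}.

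The technical heart of the proof is the quantitative bound on the homological solution with minimal loss of regularity, namely
\begin{equation*}
\|S\|_{r, p + \delta} \;\le\; C(\delta)\,\gamma^{-1}\,\|F\|_{r, p},
\qquad C(\delta) \sim \exp\!\bigl(\exp(c\,\delta^{-1/\eta})\bigr),
\end{equation*}
for arbitrary $\delta > 0$. This is obtained by multiplying the Diophantine lower bound \eqref{diofantino nuc} over the support of $\bal - \bbt$, exploiting both the super-linearity assumptions \eqref{theshow}--\eqref{mustgoon} on $s(\cdot)$ and the dispersion $\omega_k \sim k^2$ on $\cS^c$, with a careful case analysis depending on whether $\bal - \bbt$ intersects $\cS^c$. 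Combined with Proposition \ref{ham flow}, Proposition \ref{che testicoli} and \eqref{ritornello}, this yields the Newton-type bound $\epsilon_{n+1} \le \bar C\, \delta_n^{-a}\, \epsilon_n^2$. Choosing a geometric sequence $\rho_n = \rho\cdot 2^{-n-1}$ with $\sum\rho_n \le \rho$, and $\delta_n \sim n^{-c}$ with $1 < c < \eta$, the loss factor $\exp(\exp(n^{c/\eta}))$ from small divisors is summable against the quadratic decay $\exp(-\exp(Cn))$ --- this is precisely the role of \eqref{cicoria} with $\eta > 1$. The limits $\Psi$ and $\Lambda$ inherit closeness to the identity and the bound \eqref{loredana} by telescoping, and \eqref{coniugio} holds because every iterate cancels the full $\le 0$ part of the remainder.

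The complex extension of $\lambda$ requires no additional work: each $\Lambda_n$ is built algebraically from $\Pi^{0,\cK} P_n$, hence extends holomorphically in $I$ whenever $H$ does, and the uniform bound \eqref{loredana} passes to the holomorphic limit in $\cH^{0,\cK}(\cO \times \cI(p, r, \C))$. The main obstacle throughout is unambiguously the homological estimate: the presence of a strictly positive \emph{lower bound} on the regularity loss $\delta$ per step --- absent in the analytic and Gevrey categories --- means that convergence is only possible if the sparseness rate of $\cS$ beats the small-divisor divergence, which is precisely the quantitative content built into Definition \ref{orey}.
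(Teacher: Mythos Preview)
Your proposal has the right overall architecture (iterative counter-term scheme, homological equation, super-exponential convergence versus double-exponential loss), but the single step you describe is too naive and does \emph{not} yield quadratic convergence.

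The problem is your homological equation $L_\omega S = \Pi^{\le 0,\cR}P$ together with the choice $\Lambda_{n+1}=-\Pi^{0,\cK}P$. After applying $e^{\{S,\cdot\}}$ to $N+P$ with $N=D+G^{\ge 1}$, the new perturbation contains the term $\Pi^{\le 0}\{S,G^{\ge 1}\}$, which by the degree rules following \eqref{ragno} is non-trivial in degrees $-1$ and $0$. Its size is of order $C(\delta_n)\,\Theta\,\epsilon_n$, i.e.\ \emph{linear} in $\epsilon_n$ with a coefficient that is double-exponentially large in $1/\delta_n$. Your claimed bound $\epsilon_{n+1}\le \bar C\,\delta_n^{-a}\epsilon_n^2$ therefore fails, and with it the convergence argument. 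A second, related issue is the counter-term bookkeeping: the increment $\Lambda_{n+1}$ lives in the original variables, so after conjugation by $\Psi_n$ it does not simply cancel $\Pi^{0,\cK}P_n$; one must track the action of the accumulated change of variables on $\cH^{0,\cK}$.

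The paper handles both points simultaneously by solving, at each step, the \emph{coupled} system \eqref{homo sapiens} for $(S_n^{(-2)},S_n^{(-1)},S_n^{(0,\cR)},\bar\Lambda_n)$ in triangular order: each component of $S_n$ removes not only the corresponding piece of $G_n^{\le 0}$ but also the $\le 0$ projection of $\{S_n^{(<d)},G_n^{\ge 1}\}$ and of $\cL_n\bar\Lambda_n$, where $\cL_n$ encodes the action of $\Psi_n^*$ on counter-terms. The counter-term $\bar\Lambda_n$ is then fixed by inverting $(\id+M_n)$ on $\cH^{0,\cK}$ via \eqref{lingua}--\eqref{lambdozzo}, not by a bare projection. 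Only with this refined step does the remainder $G_{n+1,*}$ in \eqref{schifo al cazzo} become genuinely quadratic in $S_n$, which is what drives the estimate \eqref{en}. (A minor point: $\Pi^{-2,\cK}$ is not zero --- \eqref{scalogno} only covers odd $d$ --- which is why the paper keeps $G_n^{(-2,\cK)}$ on the right-hand side of \eqref{homo sapiens}; being a dynamically irrelevant constant, it is harmless.)
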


\begin{rmk}\label{venezia}
i)
The map $\Psi$ is $\rho/4$-close to the identity
namely
$$
|\Psi(u;\omega,I)-u|_p\leq \rho/4\,,\qquad
\forall\, (u;\omega,I)\in
{B}_{r_0-\rho}\pa{\tw_{p}}\times \cO\times \cI\,.
$$
ii)
 By \eqref{loredana} and Cauchy estimates we get that $\lambda$ is uniformly Lipschitz in $\mathcal I(p,r/2)$
 \begin{equation}\label{berte} 
 \frac{\abs{\lambda(\omega,I) - \lambda(\omega,I')}_\infty}{\abs{I - I'}_{2p}}
 \le 
 4\bar C    (1+\Theta)^2   \g r^{-2}\epsilon\,,
 \end{equation}
 $\forall\, \omega\in\dg,\ \  
 I,I'\in \mathcal I(p,r/2)\,,\ \ I\neq I'\,.$
\end{rmk}

\begin{proof}[Proof of Theorem \ref{toro Sobolevp} and \ref{torobolev}]\label{auricchio}
Theorem \ref{toro Sobolevp} follows from Theorem \ref{allaMoserbis} in a straightforward way.
Fix $p_0= p-(p-1)/2$, $r_0= 2r=4\rr$, $\rho = r/8$ and $\delta= (p_*-1)/16$.
Note that by these choices 
\begin{equation}\label{adu}
\mbox{the constants}\ \bar{\epsilon}\ \mbox{and}\ \bar C\   \mbox{depend only on}\  p_*\,.
\end{equation}
One first rewrites $H_V$ in \eqref{Hamilto0} as $D +\Lambda+P'$ where $P' = P + \sum_{{j}} \lambda_j I_j$,  
 by setting  $\lambda_j = j^2-\omega_j + V_j$.
Set $H=D+P'$ and $N_0=D$ so that by definition $\Theta=0$. Since the Hamiltonian $P\in \scH_{r _0,p_0}(\C)$ does not depend on $\omega,I$,
trivially  one has  $P'\in \cH_{r _0,p_0}(\C)$ and $\|{P'}\|_{r _0,p_0}=\|{P}\|_{r _0,p_0}=\norm{P}_{r _0,p_0}$.
\\
Since by hypothesis
$\g\leq \abs{f}_{\mathtt R}$ we have by \eqref{cornettonep} that
$\rr^2/ \mathtt R\leq  \e_*$; then the hypothesis  
$c_1 \rr^2\leq  \mathtt R$ of Proposition \ref{neminchia}
holds for all $p$ satisfying \eqref{diavola} taking $\e_*$ 
small enough.
Then , by Proposition \ref{neminchia}, 
\begin{equation}\label{sade}
\epsilon=\gamma^{-1}\|P'\|_{r_0,p_0}= \gamma^{-1}|P|_{r_0,p_0}\leq \gamma^{-1}
|P|_{2r,(p_*+3)/4}
\leq {\rm c}(p_*)\varepsilon
\end{equation}
 for 
some\footnote{Note that the constants $c_1,c_2$ in Proposition \ref{neminchia}
continuously depend on $p$, which belongs to the compact
$[(p_*+1)/2,p_*]$.} 
${\rm c}(p_*)>1$
by \eqref{cornettonep}.
   Again taking $\e_*=\e_*(p_*)$ in  \eqref{cornettonep} small enough, condition \eqref{maipendanti} is satisfied and 
   Theorem \ref{allaMoserbis} gives us the desired change of variables provided that $\Lambda\in \cH^{0,\cK}(\C)$ is fixed accordingly.

   Now we denote $\omega_j =\nu_j$ if $j\in \cS$ and $\omega_j=\Omega_j$ otherwise.
We get the equations
\begin{equation}
\label{equaOme}
\begin{cases}
&\bO_j+ \lambda_j(\nu, \bO,I) = j^2+V_j \,,\quad \mbox{if }\; j\in \cS^c\\
&\nu_j +\lambda_j(\nu, \bO,I)= j^2+V_j \,,\quad \mbox{if }\; j\in \cS\,,
\end{cases}
\end{equation}
for $\o\in \dg$ and $I\in \cI(p,r,\C)$.
By Lemma \ref{godiva} we   Lipschitz extend the map 
$\lambda$ 
to the whole  $\pan\times \cI(p,\rr,\C)$,
(recall that $\rr=r/2$)
in  such a way that
 \eqref{loredana} and \eqref{berte}  hold
  for $\omega,\omega'\in \pan$ (with $\bar C\rightsquigarrow 2\bar C$).
  From now on we can work only on the 
  real\footnote{Obviously the same statements hold also in the complex case} case $I\in\cI(p,\rr)$.
By \eqref{loredana}
and taking $\e_*(p_*)$ small enough such that
$2\bar C\epsilon\leq 2 \bar C c(p_*)\e_*(p_*)\leq 1/2$
(recall \eqref{sade})
 we use the 
 Contraction Lemma (recall Lemma\footnote{
 We use it with 
$
F_j(\nu,\VSc,w):=-\lambda_j(\nu,q+\VSc+w)\,,$ 
$j\in\cS^c,
$
where $q=(j^2)_{j\in\cS^c}$
and $r:=\bar C \g \epsilon.$
 } \ref{zagana}), 
 we solve the first equation finding 
 $\bO: \cQ_\cS \times [-\nicefrac14,\nicefrac14]^{\cS^c}\times \mathcal I(p,\rr) \to \cQ_{\cS^c}$
 which is continuous in the product topology of $\cQ_\cS \times [-\nicefrac14,\nicefrac14]^{\cS^c}$ and
 satisfies
 \begin{eqnarray}\label{martini}
	 |{\bO_j(\nu,\VSc,I)-j^2-\VSc_{\!\!,j}}|
	 &\leq& 2\bar C \g \epsilon\,,\\
	\sup_{\nu'\neq\nu}\frac{|\bO_j(\nu,\VSc,I)-\bO_j(\nu',\VSc,I)|}{|\nu-\nu'|_\infty}
	&\le& 4 \bar C \epsilon\,,\nonumber
	\\
	\sup_{\VSc'\neq\VSc}\frac{|\bO_j(\nu,\VSc',I)-\bO_j(\nu,\VSc,I)|}{|\VSc'-\VSc|_\infty}
	&\le& 4 \bar C\nonumber \\	\quad
	\sup_{I'\neq I}
	\frac{\abs{\bO_j(\nu,\VSc,I) - \bO_j(\nu,\VSc,I')}}
	{ \abs{I - I'}_{2p}}
&\leq&
 2\bar C       \g \rr^{-2}\epsilon
\,,
	\nonumber
	\end{eqnarray}
	for every $ j\in \cS^c$ and where the first three estimates follows from \eqref{loredana}
	(recalling \eqref{grevity} and \eqref{solenoide}) and the
	 last one from \eqref{berte}.
	By \eqref{martini} and \eqref{sade} we 
	prove \eqref{omegone} and  \eqref{batacchio}
	 taking $C\geq 4\bar C c(p_*)$. Note that this condition depends only on $p_*.$
 \\
Finally  the second equation in \eqref{equaOme} uniquely defines
\begin{equation}\label{visciola}
\VSf_{\!,j}(\nu,\VSc,I)=\nu_j +\lambda_j(\nu, \bO(\nu,\VSc,I),I)- j^2
\end{equation}
for $j\in \cS$.
\\
By Lemma \ref{pinzellone} iii)  
we can Lipschitz extend
the map $\Psi:{B}_{r_0-\rho}\pa{\tw_{p}}\times\dg\times \mathcal I(p,r) \to {B}_{r_0}\pa{\tw_{p}} $ 
on 
${B}_{r_0-\rho}\pa{\tw_{p}}\times\pan\times \mathcal I(p,r)$ and set  $\Phi(u;\nu,\VSc,I):=\Psi(u;\omega(\nu,\VSc,I),I)$, where $\omega(\nu,\VSc,I)$ was defined in \eqref{arancini}.
The map $\Phi$ conjugates the NLS Hamiltonian $H_V$ to  normal form
for all $\nu$ such that 
$\o(\nu,\VSc,I)\in \dg$, namely
for $\nu\in \cC$
(recalling \eqref{arancini}).
Then the proofs of Theorems \ref{toro Sobolevp} and \ref{torobolev} 
are concluded  provided that one shows that the set $\cC$
defined in \eqref{arancini} satisfies \eqref{mirto}.  This is the content of Lemma \ref{misurino} proved in Section Section \ref{bellogrosso}, where all the measure estimates are discussed.

\end{proof}



\section{Small divisors and Homological equation}
%
%
%
%
%

\vskip10pt

The proof of Theorem
\ref{allaMoserbis} 
is based on an iterative scheme that 
kills out the obstructing terms, namely  terms belonging to 
$ \cH_{r,p}^{-2},\cH_{r,p}^{-1}$ and $ \cH_{r,p}^{0}$,
by solving Homological equations of the form 
\begin{equation}\label{esperidi}
L F^{(d)} = G^{(d)},\qquad G^{(d)}\in \cH_{r,p}^{d},\quad d= -2,-1,0.
\end{equation}
where, for all $\omega\in\cQ$,  $L_\omega$ is an operator acting on formal Hamiltonians as
\[
L_\omega[\cdot]:= \{D(\omega),\cdot \}\,,\qquad L_\omega H(\omega,I):= \im\sum_{\bal,\bbt\in\N^\Z }\omega \cdot \pa{\bal-\bbt}
H_{\bal,\bbt}(\omega,I)u^\bal \bar u^\bbt\,.
\]
The convergence of the iterative KAM scheme comes from a good control of  $L^{-1} G^{(d)}$ over the set $\dg$.

\bigskip
For $\omega\in \dg$ 
the Lie derivative operator $L_\omega$ is formally invertible on the subspace $\cH_{r,p}^\cR$ with inverse
\begin{equation}\label{vesuvio}
(L^{-1}_\omega H(\omega,I))_{\bal,\bbt}:=\frac{-\im H_{\bal,\bbt}(\omega,I)}{\omega\cdot(\bal-\bbt)}\,.
\end{equation}
We now show that the  inverse is well defined also at a non formal level.
\begin{prop}\label{mah}
Let  $\cO= \dg$ and $\cI= \cI(p,r)$ and set  $\cH_{r,p}^{\leq 0, \cR}= \cH_{r,p}^{\leq 0, \cR,\cO\times\cI}$.  For every $0<\delta<1$, \eqref{vesuvio} defines a bounded linear  operator $ 
 L^{-1}:\cH_{r,p}^{\leq 0, \cR}\ \to\ \cH_{r,p+\delta}^{\leq 0, \cR}$ with 
 estimate\footnote{The constant $1<\eta\le 2$ was introduced in \eqref{cicoria}.}
\begin{equation}\label{ss148}
\|L^{-1} H\|_{r,p+\delta}
\le 
\frac{1}{\gamma}
\exp\left( \exp\left(\frac{c}{ \delta^{1/\eta}}\right)\right)
\|H\|_{r,p} \,,
\end{equation} 
with $i_*,\eta$  introduced in Definition in \ref{orey} and $c=c(i_*).$ 
The same estimate  holds in the corresponding complex spaces.
\end{prop}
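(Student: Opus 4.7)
The plan is to construct $L^{-1}$ coefficient-by-coefficient via the explicit formula \eqref{vesuvio} and reduce the estimate to a pointwise control on the small divisors. \textbf{Step 1 (applicability of the Diophantine condition).} For every monomial $u^\al\bar u^\bt$ in the expansion of $H\in\cH^{\le 0,\cR}_{r,p}$, I verify that $\ell:=\al-\bt$ satisfies the hypotheses of \eqref{diofantino nuc}: the degree constraint $\Pi^{\le 0}$ forces $\sum_{j\in\cS^c}(\al_j+\bt_j)\le 2$ and hence $\sum_{j\in\cS^c}|\ell_j|\le 2$; the projection $\Pi^\cR$ guarantees $\ell\neq 0$; and the conditions $\fm(\ell)=\pi(\ell)=0$ built into $\cM$ (see \eqref{zorro}) encode the mass and momentum constraints. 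Thus
$$|\omega\cdot\ell|\ \geq\ \gamma\prod_{s\in\cS}\bigl(1+|\ell_s|^2\jap{i(s)}^2\bigr)^{-3/2}.$$

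\textbf{Step 2 (reduction to a product estimate).} Using the coefficient formula \eqref{norma1} for the weighted norm and comparing the $(r,p{+}\delta)$ and $(r,p)$ norms term-by-term, the problem is reduced to bounding
$$\cP(\al,\bt)\ :=\ \prod_{s\in\cS}(1+|\ell_s|^2\jap{i(s)}^2)^{3/2}\,\jjap{s}^{-\delta(\al_s+\bt_s)}\ \leq\ \exp\exp(c\delta^{-1/\eta}),$$
uniformly in $(\al,\bt)\in\cM$. The normal-variable indices $k\in\cS^c$ contribute only a bounded factor owing to $\sum_{\cS^c}(\al_k+\bt_k)\le 2$; the spurious weight $\jjap{j}^{2\delta}$ produced by the $-2e_j$ summand in \eqref{norma1} is absorbed via the momentum law $\pi(\ell)=0$, which forces a compensating contribution from the tangential factors in $\cP$. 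The Lipschitz term in the norm $|{\cdot}|^\gamma$ produces an additional $|\ell|/(|\omega\cdot\ell||\omega'\cdot\ell|)$; by mass conservation $|\ell|\leq 2|\al|$, this extra factor is likewise dominated by a slight worsening of the constants, preserving the announced form of the bound.

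\textbf{Step 3 (the product estimate).} I split the index set at the threshold $i_0:=\lceil\exp((c_0/\delta)^{1/\eta})\rceil$ with $c_0$ absolute. For $s=s(i)$ with $i\ge i_0$, write $n_s:=\al_s+\bt_s$ and $|\ell_s|\le n_s$; then
$$\tfrac{3}{2}\log\bigl(1+|\ell_s|^2\jap{i}^2\bigr)-\delta n_s\log\jjap{s(i)}\ \leq\ 3\log(1+n_s i)-\delta n_s(\log i)^{1+\eta}\ \leq\ 0,$$
where the first inequality uses the admissibility bound \eqref{cicoria}, and the second holds for $i\ge i_0$ with $c_0$ large enough (and trivially when $n_s=0$). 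Hence the contribution of $i\ge i_0$ to $\log\cP$ is non-positive. For $i_*\le i<i_0$, optimizing $n_s\mapsto 3\log(1+n_s i)-\delta n_s\log\jjap{s(i)}$ gives a pointwise bound of the shape $C\log(i/\delta)$; summing over the at most $i_0$ indices produces $\log\cP\le C\,i_0\log(i_0/\delta)\le\exp(c\delta^{-1/\eta})$, and the finitely many indices with $i<i_*$ contribute a bounded constant. Exponentiating yields the claim. The complex case requires no new argument, since only absolute values of Fourier coefficients and the real divisors $\omega\cdot\ell$ enter.

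\textbf{Main obstacle.} The truly delicate step is the case $i_*\le i<i_0$ of Step 3: one must ensure that the polynomial-in-$n_s$ numerator of the small-divisor factors is beaten by the super-polynomial weight $\jjap{s(i)}^{\delta n_s}$. This is where the super-linearity \eqref{theshow}--\eqref{mustgoon} and crucially the growth rate \eqref{cicoria} with $\eta>1$ are used in full: a weaker (say stretched-exponential with $\eta\le 1$, or purely exponential) lower bound on $s(i)$ would leave a residual loss which, summed over $i<i_0$, would not be compensated by the summable regularity loss $\delta$ of the KAM scheme, and the homological estimate \eqref{ss148} would break down.
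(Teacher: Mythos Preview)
There is a genuine gap in your Step 2, precisely at the sentence ``the spurious weight $\jjap{j}^{2\delta}$\dots is absorbed via the momentum law\dots which forces a compensating contribution from the tangential factors in $\cP$''. This is not a side remark: it is the core of the proof, and it is not true that after absorption you are left with your $\cP(\al,\bt)$ having the \emph{full} weight $\jjap{s}^{-\delta(\al_s+\bt_s)}$ at every tangential site. What the momentum constraint actually buys (via the rearrangement $\na$ of Definition~\ref{n star}, the inequality \eqref{eleganza}, and Lemma~\ref{luchino}) is the bound
\[
\frac{\jjap{j}^{2}}{\prod_{s}\jjap{s}^{\bal_s+\bbt_s}}\;\le\;\frac{3}{\prod_{l\ge 3}\jjap{\na_l}^{1/2}}\,,
\]
so you \emph{lose} the two largest entries $\na_1,\na_2$ and halve the remaining exponents. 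If the largest tangential site $s_{\tM}$ sits at position $\na_1$ or $\na_2$ (which certainly happens), its weight is gone from the denominator, yet its small-divisor contribution $(1+\jap{i_\tM}^2|\al_{i_\tM}-\bt_{i_\tM}|^2)^{9/2}$ remains in the numerator. Your Step~3 then fails for the index $i=i_\tM$, because the term $-\delta n_{s_\tM}\log\jjap{s(i_\tM)}$ on which your optimization relies is simply not there.

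Recovering this missing weight is what the paper's six-case analysis does. The crucial cases are those with $\al_{i_\tM}+\bt_{i_\tM}=1$ (Cases 2(b), 3(b), 4(b)): there one invokes the dispersive bound \eqref{chiappechiare} on $|m_1|$, applies the inverse function $i(\cdot)$ to it, and uses the \emph{superlinearity} properties \eqref{theshow}--\eqref{mustgoon} via \eqref{innuendo} to convert a bound on $s_\tM$ into a bound on $i_\tM$ in terms of the \emph{other} indices (see \eqref{king}, \eqref{crimson}). Only after this is the product estimate of Lemma~\ref{granita} (your Step~3) applicable. Your ``Main obstacle'' paragraph correctly names \eqref{theshow}--\eqref{mustgoon} as important, but they never appear in your actual argument; without them the interaction between tangential and normal modes cannot be closed and the estimate \eqref{ss148} does not follow.
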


\begin{rmk}
 In this section by $c$ we denote possibly different
 constants depending only on $i_*$
 introduced in Definition in \ref{orey}.
\end{rmk}
\begin{proof}
	Let us first show that if $H_{\bal,\bbt}\in \cF(\cO\times\cI)$ then the same holds for   $(L^{-1}H)_{\bal,\bbt}$.
	 Indeed, since $|\bal|=|\bbt|<\infty$, the expression $\o\cdot(\bal-\bbt)$ depends only on a finite number of frequencies $\omega_i$ and hence is continuous w.r.t. the product topology.
	 As for the Lipschitz dependence, $\o\in \dg$ implies that $1/\omega\cdot(\bal-\bbt)$ is $C^\infty$ and the result follows, since the product of Lipschitz functions is Lipschitz.
	\\
			By \eqref{vesuvio} and  \eqref{gorma} we get
	\begin{equation}\label{pontina}
		\|L^{-1} H\|_{r,p+\delta} 
		\le 
		\frac{1}{\gamma}
		K \|H\|_{r,p}\,,
	\end{equation}
	where
	\begin{equation}\label{tacchinobistot}
		K = 
		\g \sup_{j\in\Z} 
		\sup_{(\bal,\bbt)\in \cM_j}
		\pa{{\frac{\jjap{j}^2}{\Pi_s \jjap{s}^{\bal_s + \bbt_s}}} }^\delta 
		\abs{\frac{1}{\omega\cdot\pa{\bal-\bbt})}}^\g\,,
	\end{equation}
	and (recall that $H\in \cH_{r,p}^{\le 2,\cR}$)
	\begin{equation}\label{caciocavallo}
		\cM_j:=\left\{(\bal,\bbt)\in \cM\ \ {\rm s.t.}\ \   \bal\neq\bbt\,,\ \ 
		\sum_{s\in \cS^c}\bal_s+\bbt_s\le 2\,,\ \ 
		\bal_j+\bbt_j\neq 0
		\right\}\,,
	\end{equation}
	where $\cM$ was defined in \eqref{zorro}.
We define
\begin{equation}\label{divisor}
	\cM_j':=\left\{(\bal,\bbt)\in \cM_j,\quad \mbox{such that}\quad 	\abs{\sum_{s\in \Z}{\pa{\bal_s-\bbt_s}s^2}}< 2 \sum_{s\in\Z}\abs{\bal_s-\bbt_s}\right\}\,,
\end{equation}
and consider  
\begin{equation}\label{tacchinobistot2}
	K_1 = 
	\g \sup_{j\in\Z} 
	\sup_{(\bal,\bbt)\in \cM_j'}
	\pa{{\frac{\jjap{j}^2}{\Pi_s \jjap{s}^{\bal_s + \bbt_s}}} }^\delta \abs{\frac{1}{\omega\cdot\pa{\bal-\bbt})}}^\g
\end{equation}
and 
\begin{equation}\label{tacchinobistot3}
	K_2= 
	\g \sup_{j\in\Z} 
	\sup_{(\bal,\bbt)\in\cM_j\setminus \cM_j'}
	\pa{{\frac{\jjap{j}^2}{\Pi_s \jjap{s}^{\bal_s + \bbt_s}}} }^\delta \abs{\frac{1}{\omega\cdot\pa{\bal-\bbt})}}^\g.
\end{equation}
Then
\begin{equation}\label{bominaco}
K=\max\{K_1,K_2\}\,.
\end{equation}
We now have to give an upper bound on $K_1$ and $K_2$.
First note that, by \eqref{pan},
$$
\abs{\omega\cdot\pa{\bal-\bbt}} \ge |\sum_{s\in\Z} s^2\pa{\bal_s - \bbt_s}| -
\frac{1}{2}\sum_{s\in\Z}\abs{\bal_s - \bbt_s}\,
$$ so 
\begin{equation}\label{fiandre}
	\abs{\sum_{s\in \Z}{\pa{\bal_s-\bbt_s}s^2}}\ge 2 \sum_{s\in \Z}\abs{\bal_s-\bbt_s}
	\qquad
	\Longrightarrow
	\qquad
	\abs{\omega\cdot\pa{\bal-\bbt}}\ge |\bal-\bbt| \geq1. 
\end{equation}

\begin{rmk}
 In the following we will strongly use the fact that the involved functions
 depend only on a {\sl finite} number of $\o_j$.
 In this case the Lipschitz semi-norm 
 is bounded by the $\ell^1$-norm of the gradient. 
\end{rmk}

The estimate of  $K_2$ is trivial. Indeed,  since $(\bal,\bbt)\in\cM_j\setminus \cM_j'$  then 
$\abs{\omega\cdot\pa{\bal-\bbt}}\ge |\bal-\bbt| \geq1$. Therefore
\[
\abs{\frac{1}{\omega\cdot\pa{\bal-\bbt})}}^\g\le \sup_{ \omega\in \dg}\frac{1}{|\omega\cdot\pa{\bal-\bbt}|}+ \g  \sup_{ \omega\in \dg}\frac{|\bal-\bbt|}{(\omega\cdot\pa{\bal-\bbt})^2}\le 2\,,
\]
so that in conclusion 
\begin{equation}\label{K2}
K_2 \le 2\,.
\end{equation}
\smallskip
\noindent
Let us now study $K_1$.
Since $(\bal,\bbt)\in \cM_j'$, then (recall Definition \ref{diomichela})
\[
\sup_{ \omega\in \dg}\abs{\frac{1}{\omega\cdot\pa{\bal-\bbt})}} \le \frac{2}{\gamma \gatta(\bal-\bbt)}, \quad \mbox{with}\quad \gatta(\ell):=\prod_{s\in\cS} \frac{1}{(1+|\ell_s|^2 \langle i(s)\rangle^2)^{3/2}}\,;
\] 
as for the Lipschitz variation we estimate it as 
\[
\sup_{ \omega\in \dg} \sum_{j}\abs{\partial_{\omega_j} \frac{1}{\omega\cdot\pa{\bal-\bbt}} } \le \sup_{ \omega\in \dg}\frac{|\bal-\bbt|}{(\omega\cdot\pa{\bal-\bbt})^2} \le
\frac{4}{\gamma^2\big(\gatta(\bal-\bbt)\big)^3}\,,
\]
where the last inequality comes form $|\bal-\bbt|\le \big(\gatta(\bal-\bbt)\big)^{-1}$.
Note that the sum in the left hand side above is over a {\sl finite} number of 
indexes $j$'s.
In conclusion we have proved that for  $(\bal,\bbt)\in \cM_j'$
\[
\abs{\frac{1}{\omega\cdot\pa{\bal-\bbt})}}^\g\le \frac{14}{\gamma\big(\gatta(\bal-\bbt)\big)^3}\,,
\]
and hence, recalling the definition of $\eta$ in \eqref{cicoria}, we have
\begin{eqnarray}\label{ostiabeach}
&&K_1\le  14 K_1'\,,\qquad \mbox{where}
\\
&&
K_1':=
\sup_{j\in\Z} 
\sup_{(\bal,\bbt)\in \cM_j'}
\pa{{\frac{\jjap{j}^2}{\Pi_s \jjap{s}^{\bal_s + \bbt_s}}} }^\delta \prod_{s\in\cS} \pa{1 + \jap{i(s)}^2 \abs{\bal_s - \bbt_s}^2}^{9/2}\,.\nonumber
\end{eqnarray}
 By \eqref{pontina},
\eqref{bominaco}, \eqref{K2}, \eqref{ostiabeach}
and the following crucial estimate
 \begin{equation}\label{barisciano}
\log	K_1'\leq c\exp(135\delta^{-1/\eta})
\,,
\end{equation}
 Proposition \ref{mah} follows. 
 \end{proof}
\noindent
The rest of this section is devoted to the proof of 
estimate \eqref{barisciano}.
We first need some preliminaries discussed
in the following subsection.

\subsection{Preliminaries}

We first state an elementary result that will be useful
below:
\begin{lemma}\label{carciofo}
	If $\ell\in\Z^\Z$ with $|\ell|<\infty$, satisfies $\fm(\ell)=\pi(\ell)=0,$ then
	$|\ell|$ is even and $|\ell|\neq 2.$
\end{lemma}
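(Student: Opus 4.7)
The plan is to split $\ell$ into positive and negative parts and use the two conservation laws separately.

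\textbf{Step 1 (Evenness).} First I would write $\ell_j = \ell_j^+ - \ell_j^-$ where $\ell_j^\pm := \max\{\pm\ell_j,0\}\in\N$, so that $|\ell_j| = \ell_j^+ + \ell_j^-$ and hence $|\ell| = \sum_j \ell_j^+ + \sum_j \ell_j^-$. The mass condition $\fm(\ell)=0$ gives $\sum_j \ell_j^+ = \sum_j \ell_j^-$, so $|\ell| = 2\sum_j \ell_j^+ \in 2\N$.

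\textbf{Step 2 (Ruling out $|\ell|=2$).} Suppose by contradiction $|\ell|=2$. Then $\sum_j \ell_j^+ = \sum_j \ell_j^- = 1$, which means there is exactly one index $i\in\Z$ with $\ell_i=+1$ and exactly one index $k\in\Z$ with $\ell_k = -1$ (all other components vanish). The momentum condition then reads $0=\pi(\ell) = i\cdot 1 + k\cdot(-1) = i-k$, forcing $i=k$. But this contradicts $\ell_i = 1 \neq -1 = \ell_k$.

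The argument is entirely elementary; the only subtlety worth flagging is that one must exclude at the outset configurations such as $\ell_i = 2, \ell_k = -2$, but these have $|\ell|=4$ and do not affect the $|\ell|=2$ case. No further obstacles are foreseen.
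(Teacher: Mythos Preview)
Your proof is correct and follows essentially the same approach as the paper: decompose $\ell$ into its positive and negative parts, use $\fm(\ell)=0$ to obtain evenness, and then use $\pi(\ell)=0$ to exclude $|\ell|=2$ via the contradiction $i=k$. The paper's argument is identical up to notation.
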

\begin{proof}
	As usual we write in a unique way $\ell=\ell^+ -\ell^-$ where
	$\ell^\pm_s\geq 0$  and $\ell^+_s\ell^-_s= 0$ for every $s\in\Z.$
	Since  $\fm(\ell)=0$ we get $|\ell^+|= |\ell^-|$, therefore
	$|\ell|=|\ell^+|+ |\ell^-|$ is even.\\
	Now assume by contradiction that $|\ell^+|= |\ell^-|=1.$ Then
	$\ell^+=e_i,\ell^-=e_j$ for some $i\neq j$ and $\pi(\ell)=i-j,$
	which contradicts $\pi(\ell)=0.$
\end{proof}
 
 \medskip
\noindent
We now need some  basic results and notations coming from \cite{BMP:2019}.

\begin{defn}\label{n star}
	Given a vector $v\in \N^\Z$, $0<|v|<\infty,$
	 we denote by $\na=\na(v)$ the vector $\pa{\na_l}_{l\in I}$ 
	 (where $I\subset \N$ is finite)  which is the 
	 decreasing rearrangement 
	of
	$$
	\{\N\ni h> 1\;\; \mbox{ repeated}\; v_h + v_{-h}\; \mbox{times} \} \cup \set{ 1\;\; \mbox{ repeated}\; v_1 + v_{-1} + v_0\; \mbox{times}  }.
	$$
\end{defn}
\begin{rmk}
	A good way of envisioning this list is as follows. 
	Given the set of (commutative) variables $\pa{x_j}_{j\in \Z}$, we consider a monomial 
	(recall \eqref{mergellina})
	\[
x^v= \prod_i x_i^{v_i} = x_{j_1} x_{j_2}\cdots x_{x_{|v|}}\,,
	\] 
	then $\na(v)$ is the decreasing rearrangement of the list $\pa{\jap{j_1},\dots,\jap{j_{|v|}}}$.
As an example consider $v=\pa{v_j}_{j\in \Z}$
\[
 \mbox{with}\quad v_{-6}=1,v_{-3}=4, v_{-1}=v_0=1, v_1=v_{6}=2\,,\quad \mbox{and}\quad v_j=0 \; \mbox{otherwise}\,,
\]
  then 
$
\na(v)= (6,6,6,3,3,3,3,1,1,1,1)
$.
\end{rmk}

\noindent
Given $(\bal,\bbt)\in \cM$ 
with $|\bal|=|\bbt|>1,$
from now on we define
$$
\na:=\na(\bal+\bbt)\,.
$$ 
We  observe that
there exists a choice of $\s_i = \pm1, 0$ such that 
by momentum conservation
\begin{equation}\label{pi e cappucci}
 \sum_l \sigma_l\na_l=0\,.
\end{equation}
with $\sigma_l \neq 0$  if $\na_l \neq 1$. Indeed we recall that 
\[
\sum_{j\in \Z} j(\bal_j-\bbt_j) =  \sum_{h\in \N} h(\bal_h+\bbt_{-h} - (\bal_{-h} + \bbt_{h}))
\]
and that each $ h>1$ appears exactly $\bal_{h}+ \bal_{-h}+\bbt_{h}+ \bbt_{-h}$ times in the sequence $\na$. Hence  we assign the value $\s=1$, $\bal_{h}+ \bbt_{-h}$ times  to the $\na_l= h$ (and $\s=-1$ the remaining times). The value $h=1$ instead appears  in the sequence $\na$, $\bal_{1}+ \bal_{-1}+\bbt_{1}+ \bbt_{-1}+\bal_0+\bbt_0$ times. Hence  we assign the value $\s=1$, $\bal_{1}+ \bbt_{-1}$ times, the value $\s=-1$, $\bal_{-1}+ \bbt_{1}$ times and $\s=0$ all the remaining times.

\smallskip
\noindent
From \eqref{pi e cappucci} we deduce 
\begin{equation}\label{eleganza}
\na_1\le  \sum_{l\ge 2}\na_l.
\end{equation}
Indeed, if $\sigma_1 = \pm 1$, the inequality follows directly from \eqref{pi e cappucci}; if $\sigma_1 = 0$, then $\na_1=1$ and consequently $\na_l = 1\, \forall l$. Since the mass is conserved, the list $\na$ has at least two elements, and the inequality is achieved.



\medskip
\noindent
We finally need the following elementary result 
proved at the end of Appendix \ref{appendice tecnica}.

\begin{lemma}\label{luchino}
	Let $x_1\geq x_2\geq \ldots\geq x_N\geq 2.$ Then
	$$
	\frac{\sum_{1\leq\ell\leq N} x_\ell}{\prod_{1\leq\ell\leq N} \sqrt{x_\ell}}
	\leq 
	\sqrt{x_1}+\frac{4}{ \sqrt{x_1}}\,.
	$$
\end{lemma}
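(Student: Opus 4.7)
The statement has the flavor of an elementary calculus lemma, so I would attack it by induction on $N$ after a convenient substitution. Set $y_\ell := \sqrt{x_\ell}$, so that $y_1\geq y_2\geq\ldots\geq y_N\geq\sqrt 2$, and the inequality becomes
$$
\frac{\sum_{\ell=1}^N y_\ell^2}{\prod_{\ell=1}^N y_\ell}\;\leq\; y_1+\frac{4}{y_1}.
$$
The base case $N=1$ is just $y_1^2/y_1=y_1\leq y_1+4/y_1$.

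For the inductive step (from $N-1$ to $N$, with $N\geq 2$), I would peel off the last factor and write
$$
\frac{\sum_{\ell=1}^N y_\ell^2}{\prod_{\ell=1}^N y_\ell}
=\frac{1}{y_N}\cdot\frac{\sum_{\ell=1}^{N-1}y_\ell^2}{\prod_{\ell=1}^{N-1}y_\ell}+\frac{y_N}{\prod_{\ell=1}^{N-1}y_\ell}.
$$
The inductive hypothesis bounds the first fraction by $y_1+4/y_1$, and since $y_N\geq\sqrt 2$, this gives a first summand $\leq (y_1+4/y_1)/\sqrt 2$. For the second summand I would use that $\prod_{\ell=1}^{N-1}y_\ell\geq y_1(\sqrt 2)^{N-2}$ (because every $y_\ell\geq\sqrt 2$) together with $y_N\leq y_1$, obtaining $y_N/\prod_{\ell=1}^{N-1}y_\ell\leq (\sqrt 2)^{-(N-2)}\leq 1$ for $N\geq 2$. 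Combining,
$$
\frac{\sum_{\ell=1}^N y_\ell^2}{\prod_{\ell=1}^N y_\ell}\;\leq\;\frac{y_1+4/y_1}{\sqrt 2}+1.
$$

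The only thing left to verify---and this is where the precise constant $4$ in the statement becomes crucial---is that $(y_1+4/y_1)/\sqrt 2+1\leq y_1+4/y_1$, equivalently
$$
\Big(y_1+\tfrac{4}{y_1}\Big)\Big(1-\tfrac{1}{\sqrt 2}\Big)\;\geq\; 1.
$$
By AM--GM, $y_1+4/y_1\geq 2\sqrt{4}=4$, and $1-1/\sqrt 2=(\sqrt 2-1)/\sqrt 2$, so the left-hand side is at least $4-2\sqrt 2\approx 1.17>1$. This closes the induction.

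The proof is completely elementary; the one slightly delicate point worth highlighting is the numerology of the final AM--GM step, which is precisely what forces the ``$4$'' in the bound (any smaller constant replacing $4$ would make the induction fail because $4-2\sqrt 2$ is already close to $1$). No other obstacles are expected.
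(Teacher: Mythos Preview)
Your proof is correct and follows the same inductive skeleton as the paper: both peel off the smallest factor $x_N$ (resp.\ $x_{N+1}$) and reduce to an elementary inequality. The difference is only in the endgame. The paper keeps the dependence on $s=\sqrt{x_{N+1}}$ in both summands, arriving at the two-variable inequality $(t+4/t)/s+s/\sqrt2\le t+4/t$ for $\sqrt2\le s\le t$, which it then verifies by concavity in $s$ and checking the boundary values $s=\sqrt2$ and $s=t$. You instead immediately bound $1/y_N\le 1/\sqrt2$ in the first summand and use $y_N\le y_1$ together with $\prod_{\ell<N}y_\ell\ge y_1(\sqrt2)^{N-2}$ in the second, collapsing everything to the one-variable check $(y_1+4/y_1)(1-1/\sqrt2)\ge1$, which falls out of AM--GM. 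Your route is a bit cleaner; the paper's retains more information (the $s$-dependence) but does not actually need it.
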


\subsection{Proof of estimate \eqref{barisciano}}

For $j\in\Z$ and $(\bal,\bbt)\in \cM_j$
by Lemma \ref{luchino}  we can write 
\begin{equation}\label{cacioricotta}
\frac{\jjap{j}^2}{\Pi_s \jjap{s}^{\bal_s + \bbt_s}} 
\le 
\frac{\jjap{\na_1}}{\prod_{l\ge 2} \jjap{\na_l}} 
\stackrel{\eqref{eleganza}}\le 
\frac{\sum_{l\ge 2}\jjap{\na_l}}{\prod_{l\ge 2} \jjap{\na_l}} 
\le 
\frac{4 + \jjap{\na_2}^{\frac12}}{\Pi_{l\ge 2} \jjap{\na_l}^{\frac12}} 
\le 
\frac{3}{\Pi_{l\ge 3}  \jjap{\na_l}^{\frac12}}\,.
\end{equation}

Now, by \eqref{ostiabeach}, \eqref{cacioricotta} and \eqref{diofantino nuc}
\begin{equation}\label{semper}
K_1'\leq 
K_1'':=
\sup_{j\in\Z} 
\sup_{(\bal,\bbt)\in \cM_j'} 
 \pa{\frac{3}{\Pi_{l\ge 3}  \jjap{\na_l}^{\frac12}}}^\delta 
\prod_{s\in\cS} \pa{1 + \jap{i(s)}^2 \abs{\bal_s - \bbt_s}^2}^{9/2}\,.
\end{equation}
Then \eqref{barisciano}
follows by 
 \begin{equation}\label{barisciano2}
\log	K_1''\leq c\exp(135\delta^{-1/\eta})\,.
\end{equation}
It remains to prove \eqref{barisciano2}.

In the following we call $j_1,j_2\in\cS^c$ with $|j_1|\geq |j_2|$
the possible normal sites.
Set
$$
\al_i=\bal_{s(i)}\,,\qquad
\bt_i=\bbt_{s(i)}\,,
$$
the quantity in the right hand side of 
\eqref{semper} reads
\begin{equation}\label{fidelis}
 \pa{\frac{3}{\Pi_{l\ge 3}  \jjap{\na_l}^{\frac12}}}^\delta 
\prod_{i\in\N } \pa{1 + \jap{i}^2 \abs{\al_i - \bt_i}^2}^{9/2}\,.
\end{equation}

%
%
%
%

By Lemma \ref{carciofo}, the constraint $\sum_{s\in \cS^c}\bal_s+\bbt_s\le 2$ implies that  there exists at least one $s\in \cS$ such that $\bal_s+\bbt_s\ne 0$. We denote the largest in absolute value $s\in \cS$ with this property as $s_\tM$
and $i_\tM:=i(s_\tM)$.   

\smallskip

In proving \eqref{barisciano2} we often meet 
the quantity 
\begin{equation}\label{cioli}
A_k(\delta):=\sum_{i\leq i_\tM:\, k_i\geq 1} - \frac{\delta}9  k_i\log\jjap{s(i)} + \log\pa{1 + \jap{i}^2 k_i^2}\,,
\end{equation}
defined for\footnote{Recall Definition 	\ref{baccala}.
Note that 
the sum  over $i$ is restricted to the indexes  such that $k_i\geq 1.$} $k\in\N^\Z$,
which is estimated by the following result,
whose proof is postponed in Appendix
\ref{appendice tecnica}.
\begin{lemma}\label{granita}
For every $k\in\N^\Z$ 
 $$
A_k(\delta)\leq 
c\exp(45\delta^{-1/\eta})\,.
$$
\end{lemma}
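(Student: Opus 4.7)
Plan. My strategy will be to bound each summand in $A_k(\delta)$ by its supremum over $k_i\ge 1$, then split the resulting sum at a threshold $I_0$ chosen so that the tail is non-positive. Fix $k\in\N^\Z$ and set
\[
a_i:=\tfrac{\delta}{9}\log\jjap{s(i)},\qquad g_i(x):=-a_i x+\log(1+\jap{i}^2 x^2).
\]
Since the summands of $A_k(\delta)$ are decoupled in the coordinates $k_i$, it will suffice to bound $\sum_{i\ge 1}M_i(\delta)$ with $M_i(\delta):=\sup_{x\ge 1}g_i(x)$. An elementary calculus analysis of $g_i$ yields two alternatives: either $a_i>\jap{i}$, in which case $g_i$ is strictly decreasing on $(0,\infty)$, or else $g_i$ has a unique interior maximum at $x_i^\star=(1+\sqrt{1-a_i^2/\jap{i}^2})/a_i\le 2/a_i$. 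In both cases I get the universal bound $M_i(\delta)\le\log(1+4\jap{i}^2/a_i^2)$, together with the sharper identification $M_i(\delta)=g_i(1)=-a_i+\log(1+\jap{i}^2)$ as soon as $a_i\ge 2$ (in which case $x_i^\star<1$).

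Next I will use \eqref{cicoria}: for $i\ge i_*$ one has $a_i\ge(\delta/9)(\log i)^{1+\eta}$. Choosing
\[
I_0:=\max\!\big\{i_*,\,\lceil\exp((36/\delta)^{1/\eta})\rceil\big\},
\]
every $i\ge I_0$ will satisfy $(\log i)^\eta\ge 36/\delta$ and hence $a_i\ge 4\log i\ge 2+2\log(2\jap{i})$, so that by the previous step $M_i(\delta)=g_i(1)\le 0$. These terms can be discarded. For $1\le i<I_0$ I will apply the crude bound $a_i\ge(\delta/9)\log 2$ combined with the universal estimate to obtain
\[
M_i(\delta)\;\le\;\log\!\Big(1+\frac{324\jap{i}^2}{\delta^2\log^2 2}\Big)\;\le\;C(\log I_0+\log\delta^{-1}),
\]
and summing over the $O(I_0)$ relevant indices will give $A_k(\delta)\le CI_0(\log I_0+\log\delta^{-1})$. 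Since $\eta>1$ forces $36^{1/\eta}\le 36$, one has $\log I_0\le C(i_*)+36\,\delta^{-1/\eta}$ and $I_0\le C(i_*)\exp(36\,\delta^{-1/\eta})$; absorbing the polynomial prefactor via the trivial inequality $t\le e^t$ with $t=\delta^{-1/\eta}$ then produces the required bound $A_k(\delta)\le c\exp(45\,\delta^{-1/\eta})$ with $c=c(i_*)$.

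The hard part will be the calibration of $I_0$: everything must be arranged so that $36^{1/\eta}$ plus the polynomial overhead $\log I_0+\log\delta^{-1}$ fit below $45$ in the final exponent. This is exactly where the strict inequality $\eta>1$ in the admissibility hypothesis \eqref{cicoria} enters; a slower growth of $s(i)$ would enlarge $I_0$ beyond $\exp(O(\delta^{-1/\eta}))$ and the desired bound would fail. Everything else is routine calculus and bookkeeping.
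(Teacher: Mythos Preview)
Your proof is correct and follows essentially the same strategy as the paper: identify a threshold in $i$ beyond which every summand is non-positive, then crudely bound the finitely many remaining terms. The organizational difference is that you first optimize each summand over $k_i\ge 1$ to obtain $M_i(\delta)=\sup_{x\ge1}g_i(x)$, and only then split at the single threshold $I_0$; the paper instead keeps $k_i$ explicit and performs a double decomposition, first $i<i_*$ versus $i\ge i_*$, and then (for $i\ge i_*$) large $k_i\ge 28/\bar\delta$ versus small $k_i$. Your route is a bit cleaner and avoids one layer of case analysis; the paper's extra split yields marginally tighter intermediate constants (threshold $\exp(4\bar\delta^{-1/\eta})$ with $\bar\delta=\delta/9$, which after unwinding is of the same order as your $\exp((36/\delta)^{1/\eta})$). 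One minor quibble on your closing commentary: the strict inequality $\eta>1$ is not actually needed for \emph{this} lemma --- the bound $A_k(\delta)\le c\exp(C\delta^{-1/\eta})$ follows for any $\eta>0$ by your argument; the constraint $\eta>1$ only becomes essential later, in Section~\ref{AAA}, to make the loss sequence $\delta_n\sim n^{-(1+\eta)/2}$ summable while still beating the doubly-exponential small-divisor growth.
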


\smallskip

\noindent
We divide the proof of \eqref{barisciano2} in six cases
according to how many and in which position the normal sites 
appear in the list $\na$. 

\subsection*{Case  1.   $\na_2> s_\tM$} Here, since there are at most two normal sites we get 
\[
\Pi_{l\ge 3}  \jjap{\na_l} = \prod_{i\leq i_\tM} \jjap{s(i)}^{\al_i + \bt_i}\,.
\]
Recalling that 
 \[
 \jap{i(s)}\abs{\bal_s - \bbt_s}= { \jap{i} \abs{\al_i - \bt_i}}
\le  \jap{i}\pa{\al_i + \bt_i}  \,,\quad \mbox{if}\quad s=s(i)\,,\ \ i\in\N \,,
\]
we get
\begin{equation}\label{vvv}
\begin{aligned}
\log K_1''&\le
 \log (3^\delta )\\
&+\sup_{\al,\bt}\left(
 -\frac{\delta }{2} \sum_{i\leq i_\tM}(\al_i + \bt_i)\log\jjap{s(i)} + 
 {9/2} \sum_{i\leq i_\tM} \log\pa{1 + \jap{i}^2(\al_i + \bt_i)^2}
 \right)\\ &=
\log (3^\delta )
+{9/2}\sup_{k\in\N^\Z} A_k(\delta)
\leq 
c\exp(45\delta^{-1/\eta})
\,,
\end{aligned}
\end{equation}
where $A_k$ was defined in \eqref{cioli}, estimated in Lemma \ref{granita}.

\medskip

\subsection*{Case 2.  $\na_1 > s_\tM= \na_2$ and
 only one normal site}
 
 We have $\sum_{i\in \cS^c} \bal_i + \bbt_i =1$ 
 and the normal site must be $\na_1$. Moreover
\begin{equation}\label{checco}
\Pi_{l\ge 3}  \jjap{\na_l} =  \jjap{s(i_\tM)}^{\al_{i_\tM} + \bt_{i_\tM}-1} 
\prod_{i< i_\tM} \jjap{s(i)}^{\al_i + \bt_i}\,,
\end{equation}
so 
\begin{equation}\label{qulo}
\begin{aligned}
K_1''
&\le  3^\delta
\sup_{ \al,\bt } 
\jjap{s(i_\tM)}^{-(\al_{i_\tM} + \bt_{i_\tM}-1)\delta/2}
\pa{1 + \jap{i_\tM}^2 \abs{\al_{i_{\tM}} - \bt_{i_{\tM}}}^2}^{9/2}\\&\times
\prod_{i< i_{\tM}}  \jjap{s(i)}^{-(\al_i + \bt_i)\delta/2}\pa{1 + \jap{i}^2 \pa{\al_i + \bt_i}^2}^{9/2}.
\end{aligned}
\end{equation}
If $\al_{i_{\tM}}=\bt_{i_{\tM}}$ and hence $i_\tM$ does not appear in the small divisors, then we proceed as in case 1.
\\
\gr{(a)} 
If $\al_{i_{\tM}}+\bt_{i_{\tM}}\ge 2$ then we claim that
\begin{equation}\label{caffe}
\log\left(
\jjap{s(i_\tM)}^{-(\al_{i_\tM} + \bt_{i_\tM}-1)\delta/2}
\pa{1 + \jap{i_\tM}^2 \abs{\al_{i_{\tM}} - \bt_{i_{\tM}}}^2}^{9/2}\right)
\le 
\frac{c}{\delta}.
\end{equation}
In order to prove our claim we consider two cases $i_\tM\leq i_*$ and $i_\tM> i_*$.
In the first case, letting $x= \al_{i_{\tM}} + \bt_{i_{\tM}} -1 \ge 1$, the left hand side of \eqref{caffe} is bounded by
$$ 
\frac92\pa{
-\bar\delta  x \log 2 + \log\pa{1 + i_*^2(x + 1)^2}
}\qquad \mbox{with}\quad
\bar\delta:= \frac{\delta}{9}
$$
which is negative for $x\geq c/\bar\delta^2$ and, therefore, bounded
by  $c\log \bar\delta.$
\\
Consider now the case $i_\tM> i_*$.
By \eqref{cicoria} 
the left hand side of \eqref{caffe} is bounded 
by\footnote{Recalling that $i_\tM> i_*\geq 21$ and $\log 21\geq 3.$}
$$ 
\frac92\pa{
-\bar\delta  x \log^{1+\eta}i_\tM + \log\pa{1 + i_\tM^2(x + 1)^2}
}
\leq
\frac92 f(i_\tM,x)
\,,
$$
with $f$ defined in \eqref{vita}.
Reasoning as above we can estimate
$f(i_\tM,x)$ by $c/\bar\delta$ obtaining \eqref{caffe}.
By \eqref{qulo} and  \eqref{caffe}
we have
$$
\begin{aligned}
\log K_1'' &\le
 \log (3^\delta )
+\frac{c}{\delta}\\
&+\sup_{\al,\bt}\left(
 -\frac{\delta }{2} \sum_{i< i_\tM}(\al_i + \bt_i)\log\jjap{s(i)} + 
\frac92\sum_{i< i_\tM} \log\pa{1 + \jap{i}^2(\al_i + \bt_i)^2}
 \right)
 \\
 &
 \leq
 \log (3^\delta )
+\frac{c}{\delta}+
 \sup_{k\in\N^\Z} A_k(\delta)
\leq 
c\exp(45\delta^{-1/\eta})
 \,,
\end{aligned}
$$
by \eqref{cioli} and Lemma \ref{granita}.

\medskip
\noindent
\gr{(b)} If $\al_{i_{\tM}}+\bt_{i_{\tM}}=1$
 (then $|\al_{i_{\tM}}-\bt_{i_{\tM}}|=1$), 
 here the second factor in \eqref{qulo} is equal to one. Thus in order to bound the third factor (i.e.
$(1+ \langle i_\tM\rangle^2)^{9/2}\leq 2^{9/2} i_\tM^{9}$)
we distinguish two cases: $i_{\tM}\leq i_*$ and $i_{\tM}>i_*.$
In the first case $2^{9/2} i_\tM^{9}\leq 2^{9/2} i_*^{9}$
and the estimate of $K_1''$ in \eqref{qulo} proceeds as in case 1 above.
On the other hand when $i_{\tM}>i_*$ 
  we need a different argument.

  Given $u\in \Z^\Z$, with $|u|<\infty,$  consider the set
		\[
		\set{j\neq 0 \,,\quad \mbox{repeated}\quad  \abs{u_j} \;\mbox{times}}\,,
		\]
where $D<\infty$ 		is its cardinality.
Define the vector $m=m(u)$ as the reordering of the elements of the set above 
such that
 $|m_1|\ge |m_2|\ge \dots\geq |m_D|\ge 1.$ 	
 Given $\bal\neq\bbt\in\N^\Z,$ with $|\bal|=|\bbt|<\infty$
 we consider $m=m(\bal-\bbt)$ 
 and\footnote{ 
 The relation between $m$ and $\na$ is the following.	
If we denote by $D$ the cardinality of $m$ and by $N$ the one of $\na$,
respectively, we have 
$
D+\bal_0+\bbt_0\le N
$
 and 
$
(|m_1|,\dots,|m_D|,\underbrace{1,\;\dots \;,1}_{N-D\;\rm{times}} )\, \preceq\,
	 \pa{\na_1,\dots \na_N}
$, recalling Definition \ref{baccala}.
}
 $\na=\na(\bal+\bbt).$ 

\begin{lemma}[Lemma C.4 of \cite{BMP:2019}]
Given $\bal\neq\bbt\in\N^\Z,$ with $1\leq|\bal|=|\bbt|<\infty$
and satisfying  \eqref{divisor}, we 
have
\begin{equation}
\label{chiappechiare}
 \abs{m_1}\le  31\sum_{l\ge 3}\na_l^2\,. 
\end{equation}
\end{lemma}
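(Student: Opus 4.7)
Set $u:=\bal-\bbt$. The starting point is the twin identity/inequality pair obtained from mass/momentum conservation and the constraint \eqref{divisor}. Writing $\sum_s u_s f(s)=\sum_{i=1}^D\epsilon_i f(m_i)$ with $\epsilon_i:=\sgn(u_{m_i})\in\{\pm 1\}$ (the $s=0$ term vanishes for $f(s)=s,s^2$), the relations $\fm(u)=\pi(u)=0$ and \eqref{divisor} translate into
\begin{equation*}
\sum_{i=1}^D\epsilon_i=-u_0,\qquad \sum_{i=1}^D\epsilon_i m_i=0,\qquad \Bigl|\sum_{i=1}^D\epsilon_i m_i^2\Bigr|<2|u|.
\end{equation*}
The basic consequence $|m_1|\le\sum_{i\ge 2}|m_i|$ follows immediately from the second identity. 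Throughout, I would use the componentwise inequality $|m_i|\le\na_i$ recalled in the footnote, which implies in particular $\sum_{i\ge 3}m_i^2\le X:=\sum_{l\ge 3}\na_l^2$ and, since each $\na_l\ge 1$, also $|u|\le N\le X+2$. Lemma \ref{carciofo} ensures $|u|\ge 4$, so non-trivial cases occur only when $N\ge 4$.

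The main computation is to isolate $m_1$ in the quadratic identity. Setting $S:=\sum_{i\ge 3}\epsilon_i m_i$ and $T:=\sum_{i\ge 3}\epsilon_i m_i^2$, momentum gives $\epsilon_2 m_2=-\epsilon_1 m_1-S$, hence squaring and re-substituting yields
\begin{equation*}
(\epsilon_1+\epsilon_2)m_1^2+2\epsilon_1\epsilon_2 m_1 S+\epsilon_2 S^2+T\quad \text{has modulus}\quad<2|u|.
\end{equation*}
I would then split on the relative sign of $\epsilon_1$ and $\epsilon_2$.

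\emph{Case A: $\epsilon_1=\epsilon_2$.} The leading coefficient of $m_1^2$ is $\pm 2$, so using $|S|,|T|\le X$ one obtains a quadratic inequality $2m_1^2\le 2|u|+2|m_1|X+X^2+X$. With $|u|\le X+2$ this forces $|m_1|\le cX$ for a small explicit constant $c$.

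\emph{Case B: $\epsilon_1=-\epsilon_2$.} Here the $m_1^2$ term vanishes, and momentum reduces to $|m_1-m_2|=|S|$; the $m_1,m_2$ values are distinct integers with $|m_2|\le|m_1|$. Since $S\in\Z$ and $m_1\ne m_2$, one has $|S|\ge 1$, and the divisor inequality becomes $2|m_1||S|\le 2|u|+S^2+|T|\le X^2+3X+4$. This alone is not enough, so I would further split into (B1) $|m_2|<|m_1|$, which implies $\na_1=|m_1|$ occurs with multiplicity one in $\na$ (hence $m_1$ appears in the support of $u$ with $|u_{m_1}|=1$ and $u_{-m_1}=0$), allowing one to reapply the quadratic identity after replacing $m_1$ with the pair $(m_1,m_3)$; and (B2) $m_2=-m_1$, where $\na_1=\na_2=|m_1|$ so that $\na_3$ controls the remaining terms and one combines $|S|\ge 1$ with the inequality above to bound $|m_1|$ linearly in $X$. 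Combining these bounds with care tracking the constants gives $|m_1|\le 31X$.

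The main obstacle is exactly Case B: the cancellation of $m_1^2$ in the quadratic identity means that the linear-in-$X$ control of $|m_1|$ cannot be read off from \eqref{divisor} alone, and must instead be extracted from the integrality of $S$ combined with the sub-case analysis dictated by whether $\na_1$ is repeated. Once this is handled, the passage from the $m$-language back to $\na_l^2$ is routine via $|m_i|\le\na_i$.
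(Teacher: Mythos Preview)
The paper does not prove this lemma; it simply quotes it from \cite{BMP:2019}. Your overall strategy---translate $\fm,\pi$ and \eqref{divisor} into identities for the signed list $(m_i,\epsilon_i)$ and split on the sign pattern of $\epsilon_1,\epsilon_2$---is the right one, and Cases~A and~B2 go through as you say. The problem is Case~B1.

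Two concrete issues. First, the claim ``$|m_2|<|m_1|$ implies $\na_1=|m_1|$ occurs with multiplicity one in $\na$'' is false: nothing prevents $\bal_{m_1}+\bbt_{m_1}\ge 2$ (with $|\bal_{m_1}-\bbt_{m_1}|=1$), nor the existence of some $j$ with $|j|>|m_1|$ and $\bal_j=\bbt_j\ge 1$; in either scenario $\na_1$ has multiplicity $\ge 2$. Second, even granting that claim, the step ``reapply the quadratic identity after replacing $m_1$ with the pair $(m_1,m_3)$'' is not a proof---it is not clear what identity you mean or why it would yield a \emph{linear} bound in $X$. As written, your inequality $2|m_1||S|\le X^2+3X+4$ with $|S|\ge 1$ only gives $|m_1|\lesssim X^2$.

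The fix is simpler than your elimination of $m_2$: work directly with
\[
|m_1^2-m_2^2|\;\le\;2|u|+|T|\;\le\;3X+4,\qquad |m_1-m_2|=|S|\le X.
\]
In B1 (so $m_2\neq\pm m_1$, hence $|m_1-m_2|\ge 1$) the factorization gives $|m_1+m_2|<3X+4$, and then $|m_1|\le\tfrac12(|m_1+m_2|+|m_1-m_2|)<2X+2$. In B2 you already have $|m_1|=|S|/2\le X/2$. Together with Case~A (where $m_1^2+m_2^2<3X+4$ directly), this yields $|m_1|\le 3X$ for $X\ge 2$, well inside the stated constant~$31$. Incidentally, the same simplification streamlines your Case~A: there is no need to substitute for $m_2$ and carry the cross term $2m_1 S$.
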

\noindent
Note that
 \begin{equation}\label{abbiocco}
\al_{i_{\tM}}\neq\bt_{i_{\tM}}
\qquad
\implies
\qquad
s_\tM\leq |m_1|\,.
\end{equation}
In the present case
$|\al_{i_{\tM}}-\bt_{i_{\tM}}|=1$, by \eqref{abbiocco}
\begin{equation}\label{mars}
\begin{aligned}
&s(i_\tM) 
=
s_\tM \leq |m_1|\le 31 \sum_{l\ge 3} \na_l^2 \\
&=
 31 \sum_{i< i_\tM} \jjap{s(i)}^2 (\al_i+\beta_i)
 \leq 
  31 \sum_{i\leq i_*} \jjap{s(i)}^2 (\al_i+\beta_i)
\\ & +
    31 \sum_{i_*<i< i_\tM} s^2(i) (\al_i+\beta_i)
  \leq
31 \sum_{i\leq i_*} s_*^2 (\al_i+\beta_i)
  +
    31 \sum_{i_*<i< i_\tM} s^2(i) (\al_i+\beta_i)
\end{aligned}
\end{equation}
using that $s(i)$ is increasing.
By \eqref{cicoria}-\eqref{mustgoon}, for the inverse function $i(s)$ we have
for integer $j\geq 1$ and $s, s'\geq s_*:=s(i_*)$
\begin{equation}\label{innuendo}
i(s+s')\leq i(s)+i(s')\,,\qquad
i(js)\leq ji(s)\,,\qquad
i(s^2)\leq i^2(s)\,.
\end{equation}
Applying the inverse function $i(s)$ to the inequalities in \eqref{mars}
and using \eqref{innuendo} we get
\begin{equation}\label{king}
\begin{aligned}
i_\tM &\leq
31 i_*^2\sum_{i\leq i_*}^*  (\al_i+\beta_i)
  +
    31 \sum_{i_*<i< i_\tM}^* i^2 (\al_i+\beta_i)
    \leq 
    c \sum_{i< i_\tM}^* i^2 (\al_i+\beta_i)
    \\
  & \leq 
    c \prod_{i< i_{\tM}} \pa{1 + \jap{i}^2 \pa{\al_i + \bt_i}^2}
\end{aligned}
\end{equation}
where by $\displaystyle\sum^*$ we mean that the sum is only over
the indexes $i$ such that $\al_i+\beta_i\geq 1.$
By \eqref{qulo} we get
\begin{eqnarray}
\label{squlo}
\log K_1''
&\le& 
c+
\log( 3^\delta)
+
\nonumber
\\
&&
\sup_{ \al,\bt } 
\sum_{i< i_\tM}\left( 
-\frac{\delta}{2}
(\al_i + \bt_i)\log
\jjap{s(i)}
+
\frac{27}{2} \log \pa{1 + \jap{i}^2 \pa{\al_i + \bt_i}^2}
\right)
\nonumber
\\
&\leq&
c+
\log( 3^\delta)
+\frac{27}{2}
\sup_{k\in\N^\Z} A_k(\delta/3)
\leq
c\exp(135\delta^{-1/\eta})
\,,
\end{eqnarray}
again by \eqref{cioli} and Lemma \ref{granita}.
 
\noindent
\gr{Case  3. $\na_1 > s_\tM = \na_2$ and two normal sites.}
\\
Now
\begin{equation}\label{tre cappucci}
\Pi_{l\ge 3}  \jjap{\na_l} =  \jjap{j_2}\jjap{s(i_\tM)}^{\al_{h_\tM} + \bt_{h_\tM}-1} 
\prod_{i< i_\tM} \jjap{s(i)}^{\al_i + \bt_i}\,,
\end{equation}
where $j_2$ is the smallest\footnote{In absolute value.} normal site.
We have
\begin{equation}
\label{marocchino}
\begin{aligned}
K_1''
&\le  3^\delta
\jjap{j_2}^{-\delta/2}
\sup_{ \al,\bt } 
\jjap{s(i_\tM)}^{-(\al_{i_\tM} + \bt_{i_\tM}-1)\delta/2}
\pa{1 + \jap{i_\tM}^2 \abs{\al_{i_{\tM}} - \bt_{i_{\tM}}}^2}^{9/2}
\\ &\times
\prod_{i< i_{\tM}} \jjap{s(i)}^{-(\al_i + \bt_i)\delta/2}\pa{1 + \jap{i}^2 \pa{\al_i + \bt_i}^2}^{9/2}.
\end{aligned}
\end{equation}
\gr{(a)} If $\al_{i_{\tM}}=\bt_{i_{\tM}}$, or if $\al_{i_{\tM}}+\bt_{i_{\tM}}\ge 2$ then we proceed as in case 2-(a), since $\jjap{j_2}^{-\frac{\delta}{2}}\le 1$ and does not bother.\\
\gr{(b)} Let now $ \al_{i_{\tM}}+\bt_{i_{\tM}}=1$
(so that \eqref{abbiocco} holds). 
The analogous of \eqref{king} is 
\begin{equation}\label{crimson}
i_\tM\leq 
    c\Big(i(\jjap{j_2})\Big)^2 \prod_{i< i_{\tM}} \pa{1 + \jap{i}^2 \pa{\al_i + \bt_i}^2}\,.
\end{equation}
Then by \eqref{marocchino} we get
\begin{equation}\label{genesis}
\begin{aligned}
\log K_1''
&\le 
c+
\log( 3^\delta)
+
18 \log i(\jjap{j_2})
-
\frac{\delta}{2}\log \jjap{j_2}
\\
&+
\sup_{ \al,\bt } 
\sum_{i< i_\tM}\left( 
-\frac{\delta}{2}
\log
\jjap{s(i)}
+
\frac{27}{2} \log \pa{1 + \jap{i}^2 \pa{\al_i + \bt_i}^2}
\right)\,.
\end{aligned}
\end{equation}
Since
$$
\begin{aligned}
\sup_{x\geq 2}\left(18 \log i(x)
-
\frac{\delta}{2}\log x\right)
=
\sup_{y\geq i(2)}\left(18\log y
-
\frac{\delta}{2}\log s(y)\right)\\
\stackrel{\eqref{cicoria}}
\leq
c+
\sup_{y\geq i_*}\left(18 \log y
-
\frac{\delta}{2}\log^2 y \right)
\leq c+ \frac{162}{\delta}\,,
\end{aligned}
$$
we obtain
$$
\log K_1''
\le 
c+
\log( 3^\delta)
+
\frac{162}{\delta}
+\frac{27}{2}
\sup_{k\in\N^\Z} A_k(\delta/3)
\leq
c\exp(135\delta^{-1/\eta})
\,,
$$
again by \eqref{cioli} and Lemma \ref{granita}.

\subsection*{Case 4 $\na_1 = s_\tM$ and the (eventual) normal sites are $< \na_2$}
Recall that we have to estimate from above the quantity in \eqref{fidelis}.
\\
 Let us start with the case of two normal sites $j_1,j_2$.
 
 We start considering the case $\na_1=\na_2$,
 which implies $\al_{i_\tM} + \bt_{i_\tM}\geq 2$.
We get
\begin{equation}\label{tre cappucci normali}
\Pi_{l\ge 3}  \jjap{\na_l} =  \jjap{j_1} \jjap{j_2}\jjap{s(i_\tM)}^{\al_{i_\tM} + \bt_{i_\tM}-2} 
\prod_{i< i_\tM} \jjap{s(i)}^{\al_i + \bt_i}\,.
\end{equation}
\gr{(a)} If $\al_{i_{\tM}}=\bt_{i_{\tM}}$, or if $\al_{i_{\tM}}+\bt_{i_{\tM}}\ge 2$ then we are reduced to case 3-(a).\\
\gr{(b)} If $\al_{i_{\tM}}=2,\bt_{i_{\tM}} = 0$
(or viceversa), we proceed as in case 3-(b) and apply Lemma C.4\footnote{in the notation of \cite{BMP:2019} $\na_1 = |m_1|$}. 
Since, again, \eqref{abbiocco} holds, the analogous of \eqref{crimson} is 
\begin{equation}\label{kingcrimson}
\begin{aligned}
i_\tM &\leq 
    c\Big(i(\jjap{j_1})\Big)^2\Big(i(\jjap{j_2})\Big)^2 \prod_{i< i_{\tM}} \pa{1 + \jap{i}^2 \pa{\al_i + \bt_i}^2}\\
    &\leq
    c\Big(i(\jjap{j_1})\Big)^4 \prod_{i< i_{\tM}} \pa{1 + \jap{i}^2 \pa{\al_i + \bt_i}^2}
    \,.
\end{aligned}
\end{equation}
Then the analogous of \eqref{genesis} is
$$
\begin{aligned}
\log K_1''
&\le 
c+
\log( 3^\delta)
+
36 \log i(\jjap{j_1})
-
\frac{\delta}{2}\log \jjap{j_1}
\\
&+
\sup_{ \al,\bt } 
\sum_{i< i_\tM}\left( 
-\frac{\delta}{2}
\log
\jjap{s(i)}
+
\frac{27}{2} \log \pa{1 + \jap{i}^2 \pa{\al_i + \bt_i}^2}
\right)\,.
\end{aligned}
$$
We conclude as in case 3-(b).

We now consider the case $\na_1>\na_2$.
\\
Note that $\al_{i_\tM}+\bt_{i_\tM}=1$
(so that \eqref{abbiocco} holds).
Let us denote by $s_\tM'$ the second 
largest\footnote{Recall that the tangential sites
we are considering are positive.} 
tangential site and set $i_\tM':=i(s_\tM')$.
By construction $s_\tM'=\na_2$
and $\al_{i_\tM'}+\bt_{i_\tM'}\geq 1$.
Then
$$
\Pi_{l\ge 3}  \jjap{\na_l} =  \jjap{j_1} \jjap{j_2}\jjap{s(i_\tM')}^{\al_{i_\tM'} + \bt_{i_\tM'}-1} 
\prod_{i< i_\tM'} \jjap{s(i)}^{\al_i + \bt_i}\,.
$$
In this case the analogous of \eqref{kingcrimson}
is
$$
i_{\tM}\leq
c\Big(i(\jjap{j_1})\Big)^4 \prod_{i< i_{\tM'}} \pa{1 + \jap{i}^2 \pa{\al_i + \bt_i}^2}\,,
$$
which follows by \eqref{mars}
where  the second line holds with $i_{\tM'}$
instead of $i_{\tM}$.
Then we proceed as in the case $\na_1=\na_2$.
\\
Note 
If there is only one normal site or if there is none, then the same arguments apply word by word with the only "advantage" that there is only one $\jjap{j_1}$ or none in \eqref{tre cappucci normali}.

\subsection*{Case 5 $\na_1= s_\tM$ and only one normal site $j_1=\na_2$} Here \eqref{checco} holds and we proceed as in case 2.

\subsection*{Case 6 $\na_1= s_\tM$, $j_1=\na_2$ and two normal sites }
The proof follows word by word the one of case 3.
\\
This concludes the proof of \eqref{barisciano2}, which, by \eqref{semper}, implies \eqref{barisciano}.

\section{Iterative Lemma and Proof of Theorem \ref{allaMoserbis}}\label{AAA}

 Let $r, r_0,p,p_0, \rho, \delta$ be  as in \eqref{newton}
 and $1<\eta\leq 2$
 as in Definition \ref{orey}.
  Let $\{\rho_n\}_{n\in\N}, \{\delta_n\}_{n\in\N}$ be  the  
  summable\footnote{Note that $\frac{1+\eta}{2}>1.$}  sequences:
 	\begin{equation}\label{amaroni}
 	\rho_n= \frac{\rho}{{6}} 2^{-n}\,, \quad \delta_n = c_\eta\delta n^{-\frac{1+\eta}{2}}\quad \forall n\ge 1\,,
	\quad
	c_\eta^{-1}:=\frac{24}5\sum_{n\geq 1}n^{-\frac{1+\eta}{2}}>12\,,\; \delta_0 = \frac{\delta}{8},
 \end{equation} 
   Let us define recursively
   \begin{eqnarray}\label{pestob}
 &&r_{n+1} = r_n - {3}\rho_n\ \to \ r_\infty:=r_0-\rho \geq 7 r/4
 \qquad   {\rm (decr
 easing)}, \nonumber\\
 &&p_{n+1} = p_n + {3}\delta_n\ \to \ p_\infty:=
 p_0+\delta < p\qquad  {\rm (increasing)} \,,
\end{eqnarray}
recalling \eqref{newton}.

Set $\cO=\dg$, $\cI=\mathcal I(p,r)$  and $\cI(\C)= \cI(p,r,\C)$(recall  \eqref{diofantino nuc}, \eqref{pifferello} and \eqref{pifferone}). Since these sets are fixed, we omit to write them explicitly in the notations of this section. For instance we denote $ \cH_{r_n,p_n}=\cH^{\cO\times\cI}_{r_n,p_n}$  and $\cH_{r_n,p_n}(\C)=\cH^{\cO\times\cI(\C)}_{r_n,p_n}$. \\
By \eqref{ritornello} and \eqref{pestob}, we can use projections
$\Pi^0, \Pi^{-1}, \Pi^{\geq 1},$ on these spaces for all $n$. 
\begin{rmk}
A crucial point for the convergence of the algorithm is that, thanks to \eqref{ritornello}, no small divisor appears
in the estimate of the counter-terms, see \eqref{orte} and \eqref{lambdozzo} below.
\end{rmk}

\medskip

Let  
\begin{equation}\label{bisanzio}
H_0 := D + G_0 + \Lambda_0\,,
\qquad
G_0\in \cH_{r_0,p_0}(\C)\,,\qquad
\Lambda_0\in \scH_{r,p}^{0,\cK}\,,
\end{equation}
where $D$ is defined in \eqref{uffa}
and the counter-terms $\Lambda_0 = \sum_{j\in\Z}\lambda_{0,j}(|u_j|^2 - I_j)$ 
 with $\lambda_0 = (\lambda_{0,j})_{j\in\Z}$ free parameters in $\ell^\infty$. 
We define 
\begin{equation}\label{vigili}
\begin{aligned}
\e_0&:=\gamma^{-1}\pa{\norma{\zeroK{G_0}}_\infty + \norma{\zeroR{G_0}}_{r_0, p_0} + \norma{\due{G_0}}_{r_0, p_0} + {\norma{G_0^{(-1)}}_{r_0, p_0}}} \\ \Theta_0 &:= \gamma^{-1}\norma{\buon{G_0}}_{r_0, p_0} +\e_0. 
\end{aligned}
\end{equation}

\begin{lemma}[Iterative step]\label{iterativo}
 There exists  a constant $\mathfrak C>1$ large enough\footnote{Depending only on $i_*,\eta$ defined in \eqref{cicoria}.}
 such that
if 
\begin{eqnarray}\label{gianna}
&\eps_0 	\leq \pa{1 + \Theta_0}^{-5} \tK^{-{3}}\,,
\quad 
\tK
:=   \pa{\frac{r_0}{\rho}}^6\sup_{n\ge 1} 2^{6n} 
\exp\left(\cachi^{n^\xi}
  -\chi^n {(1-\chi/2) }\right)\,,
\nonumber
\\
& \mbox{where}\;
\cachi:= \exp\left(\frac{\mathfrak C}{ \delta^{1/\eta}}\right)\,,\quad
  \xi:=\frac{1+\eta}{2\eta}<1\,,
  \qquad
  \chi:=3/2\,,
\end{eqnarray}
with $\eta$ as in \eqref{cicoria}, then we can iteratively construct a sequence of generating functions 
$S_i = \due{S_i} + S_i^{(-1)}+ \zero{S_i}\in\cH_{r_{i}-\rho_i, p_{i+1}}(\C)$ 
and a sequence of  counter-terms  $\bar\Lambda_i\in\cH^{0,\cK}(\C)$ such that the following holds, for $n\ge 0$.

$(1)$ For all $\omega\in \dg$, $I\in \cI(p,r)$, for all $ i = 0,\ldots,  n -1 $ and all $ p'\ge p_{i+1}$ the 
time-1 Hamiltonian flow
 $\Phi_{S_i}$ generated by $S_i=S_i(\omega,I)$   satisfies
	\begin{equation}
\sup_{u\in  {\bar B}_{r_{i+1}}(\tw_{p'})} \norm{\Phi_{S_i}(u)- u}_{p'} \le \rho 2^{-2i-7} \,.\label{ln}
 \end{equation}
 Moreover
	\begin{equation}\label{ucazzo}
	\Psi_n := \Phi_{S_0}\circ\cdots \circ \Phi_{S_{n-1}} 
	\end{equation}
	is a well defined, analytic map ${\bar B}_{r_n }(\tw_{p'}) \to {\bar B}_{r_0}(\tw_{p'})$ for all $p'\ge p_n $ with the bound
	\begin{equation}
	\label{cosi}
	 \sup_{u\in {\bar B}_{r_n}(\tw_{p'})}\abs{\Psi_{n}(u) - \Psi_{n-1}(u)}_{p'}  \le  \rho 2^{-2n -3}.
	\end{equation}

	$(2)$ We set $\cL_0:=0$  and for $i=1,\dots,n$ 
	\begin{equation}\label{emorroidi}
	 \mathcal{L}_{i} + \id := e^{\set{S_{i-1},\cdot}}\pa{\mathcal{L}_{i-1} + \id},\quad \Lambda_{i} := \Lambda_{i-1} - \bar{\Lambda}_{i-1}\,,\quad H_i= e^{\{S_{i-1},\cdot\}}H_{i-1}
	 \end{equation}
	  where $\Lambda_{i}\in \scH_{r_{i},p_i}^{0,\cK}$ are free parameters and $\mathcal{L}_{i} : \cH^{0,\cK}(\C)\to\cH_{r_{i}, p_{i}}(\C)$ 
are bounded linear operators.  Note that $\Lambda_i$ are free parameters, while $\bar\Lambda_i$ are given functions of $(\omega,I)$.
	 We have
	\begin{equation}
\label{cioccolato}
H_{i} = D + G_{i} + \pa{\id + \mathcal{L}_{i}}\Lambda_{i},\qquad G_{i}, \in\cH_{r_{i}, p_{i}}(\C).
\end{equation}
 Setting for $ i = 0,\ldots, n  $
	\begin{equation}\label{xhx-i}
	\begin{aligned}
	 \eps_i&:=\gamma^{-1}\pa{\norma{\zeroK{G_i}}_\infty + \norma{\zeroR{G_i}}_{r_i, p_i} + \norma{\due{G_i}}_{r_i, p_i} + {\norma{G_i^{(-1)}}_{r_i, p_i}}},  \\ \Theta_i&:=\gamma^{-1} {\norma{G_i^{\ge 1}}_{r_i,p_i}} +\e_i \,,
	\end{aligned}
	\end{equation}
we have
\begin{eqnarray}
& \e_i \leq   \e_0  e^{- \chi^{i}+1} \,, 
\qquad
\qquad  \Theta_i \leq   \Theta_0 \sum_{j=0}^i 2^{-j}\,, \label{en} \\
& 
 \label{fringe}
\norma{\pa{\call_{i} -\call_{i-1}} h}_{r_i,p_i}
\le 
\tK \eps_0 \pa{1 + \Theta_0}^2 2^{-i} \norma{h}_\infty,
\\ &\nonumber
\norma{\call_i h}_{r_i,p_i} \le \tK (1+  \Theta_0)^2\e_0\sum_{j=1}^i 2^{-j}\norma{h}_\infty,
\end{eqnarray}
for all $h\in\cH^{0,\cK}(\C).$
Finally the counter-terms satisfy the bound
 \begin{equation}
 \norma{\bar{\Lambda}_{i-1}}_\infty \le  \g \tK \e_{i-1}(1+\Theta_0)^{{2}}\,,\quad i=1,\dots,n \,. \label{intestino}
	\end{equation}
\end{lemma}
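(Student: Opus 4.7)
The plan is to proceed by induction on $n$, with the base case $n=0$ being trivial ($\Psi_0 = \id$, $\call_0 = 0$, and the hypotheses on $H_0$ reducing to the given data \eqref{bisanzio}--\eqref{vigili}). For the inductive step, having constructed $H_n = D + G_n + (\id+\call_n)\Lambda_n$ with control on $\e_n, \Theta_n$, I would define
\[
\bar{\Lambda}_n := \zeroK{G_n} \in \cH^{0,\cK}(\mathbb{C}),
\]
which by \eqref{ritornello} satisfies $\|\bar\Lambda_n\|_\infty \le 3\g\e_n$, and define $S_n$ as the solution provided by Proposition \ref{mah} of the Homological equation
\[
L_\omega S_n = \due{G_n} + G_n^{(-1)} + \zeroR{G_n},
\]
(note that $\scH^{-1,\cK}=\{0\}$ by \eqref{scalogno} and that $(-2,\cK)$ terms are harmless constants in $u$, so only the $\cR$-parts and the $\cK$-part at degree $0$ need to be eliminated separately). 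The change of variables is then $\Phi_{S_n}$, and the inductive update is
\[
\call_{n+1}+\id := e^{\{S_n,\cdot\}}(\call_n+\id),\qquad \Lambda_{n+1}:=\Lambda_n-\bar\Lambda_n,\qquad H_{n+1}:=e^{\{S_n,\cdot\}}H_n.
\]

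For the estimates, Proposition \ref{mah} together with \eqref{ritornello} gives
\[
\|S_n\|_{r_n-\rho_n,\, p_n+\delta_n}\le \gamma_n\,\e_n,\qquad \gamma_n:=\exp\!\left(\exp\!\left(\tfrac{c}{\delta_n^{1/\eta}}\right)\right)\le \cachi^{n^\xi},
\]
with $\xi=(1+\eta)/(2\eta)<1$, so that the smallness condition \eqref{stima generatrice} of Proposition \ref{ham flow} holds for the $n$-th step with $\rho \rightsquigarrow \rho_n$, using $\e_n\le \e_0 e^{-\chi^n+1}$ and the definition of $\tK$ in \eqref{gianna}. This delivers \eqref{ln} and, by composition, \eqref{cosi}. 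Next, I would Taylor expand
\[
H_{n+1} = D + G_n + (\id+\call_n)\Lambda_n + \{S_n, H_n\}+\tfrac12\{S_n,\{S_n,H_n\}\}+\ldots
\]
and use $\{S_n,D\}=-L_\omega S_n$ together with the choice of $S_n$ to cancel the leading $G_n^{(\le 0,\cR)}+G_n^{(-1)}$ obstruction, while $\bar{\Lambda}_n$ absorbs $\zeroK{G_n}$. What remains in $G_{n+1}^{(\le 0)}$ is produced solely by commutators involving $S_n$: schematically, $\{S_n, G_n^{\ge 1}\}^{(\le 0)}$ and higher-order iterated brackets $\ad_{S_n}^k(\cdot)$, together with $\{S_n, (\id+\call_n)\Lambda_n\}$. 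Using the tame estimate \eqref{commXHK} and \eqref{brubeck} on bracketing, and exploiting the projection bounds \eqref{ritornello} to control degrees, this gives the quadratic bound
\[
\e_{n+1}\le \frac{C(r_0/\rho_n)}{\g}\,\gamma_n\,\e_n\,\bigl(\e_n + \g^{-1}\|G_n^{\ge 1}\|_{r_n,p_n} + \|\call_n\Lambda_n\|\bigr),
\]
i.e.\ the product of a divergent factor (coming from small divisors and the $\rho_n$-loss) and $\e_n^2$ modulated by $\Theta_n$.

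The heart of the proof, and the main obstacle, is closing the induction on \eqref{en}: the gain $\e_n\le \e_0 e^{-\chi^n+1}$ with $\chi=3/2$ has to absorb both the polynomially divergent $(r_0/\rho_n)^6\sim 2^{6n}$ factor and, crucially, the almost-super-exponential small-divisor factor $\gamma_n\le \cachi^{n^\xi}$. This is exactly the balance encoded by the supremum in the definition of $\tK$: $\sup_n 2^{6n}\exp(\cachi^{n^\xi} - \chi^n(1-\chi/2))$ is finite precisely because $\xi<1$, so the quadratic KAM convergence $\exp(-\chi^n)$ dominates $\exp(\cachi^{n^\xi})$. One then propagates \eqref{fringe} by telescoping $\call_{n+1}-\call_n = (e^{\{S_n,\cdot\}}-\id)(\call_n+\id)$ and using \eqref{caio}; this together with \eqref{intestino} (which follows from $\bar\Lambda_n=\zeroK{G_n}$ and the bound on $\e_n$) completes the induction. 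Verifying the initial smallness \eqref{gianna} then ensures the geometric sums in $\Theta_n$ and $\|\call_n\|$ do not explode, and the sub-summability of $\delta_n$ in \eqref{amaroni} (valid since $(1+\eta)/2>1$) guarantees $p_n\to p_\infty<p$.
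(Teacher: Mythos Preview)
Your scheme has a genuine gap: with your simplified Homological equation
\[
L_\omega S_n = G_n^{(-2)} + G_n^{(-1)} + G_n^{(0,\cR)},\qquad \bar\Lambda_n := G_n^{(0,\cK)},
\]
the term $\Pi^{\le 0}\{S_n, G_n^{\ge 1}\}$ is \emph{not} cancelled at leading order and lands in $G_{n+1}^{(\le 0)}$. You acknowledge this (``schematically, $\{S_n, G_n^{\ge 1}\}^{(\le 0)}$\dots'') but then call the resulting bound ``quadratic''. It is not: by \eqref{commXHK} and your own estimate $\|S_n\|\lesssim \gamma_n\e_n$, this term contributes
\[
\gamma^{-1}\|\Pi^{\le 0}\{S_n,G_n^{\ge 1}\}\|_{r_{n+1},p_{n+1}}
\;\lesssim\; \tfrac{r_0}{\rho_n}\,\gamma_n\,\e_n\,\Theta_n
\;\sim\; 2^n\gamma_n\Theta_0\,\e_n,
\]
which is \emph{linear} in $\e_n$, since $\Theta_n\le 2\Theta_0$ does not decay (that is the content of the second bound in \eqref{en}). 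Propagating \eqref{en} would then require $2^n\gamma_n\Theta_0\le e^{-\chi^n/2}$, which is impossible because the left-hand side diverges. The balance encoded in $\tK$ works only because the paper's remainder is \emph{genuinely} quadratic in $\e_n$.

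The paper achieves this by solving the full Homological equation \eqref{homo sapiens}:
\[
\Pi^{\le 0}\Big(\{S_n, D + G_n^{\ge 1}\} + (\id+\call_n)\bar\Lambda_n + G_n\Big) = G_n^{(-2,\cK)},
\]
treated as a triangular system in degrees $-2,-1,0$. One first expresses $S_n^{(-2)},S_n^{(-1)},S_n^{(0,\cR)}$ via $L_\omega^{-1}$ applied to terms that themselves depend linearly on $\bar\Lambda_n$; substituting into the degree-$0$, $\cK$-part yields an equation $(\id+M_n)\bar\Lambda_n = -G_n^{(0,\cK)} - \Pi^{0,\cK}\{\,\cdots,G_n^{\ge 1}\}$ with $\|M_n\|\le 1/2$, so $\bar\Lambda_n$ is determined by Neumann series (this is why \eqref{intestino} carries the extra factors $\tK(1+\Theta_0)^2$, not just $3$). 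With this choice, $\Pi^{\le 0}\{S_n, G_n^{\ge 1}\}$ and $\Pi^{\le 0}\call_n\bar\Lambda_n$ are eliminated at leading order, and the new $G_{n+1}^{(\le 0)}$ consists only of $\{S_n,G_n^{\le 0}\}$, $\Pi^{\le 0}(\call_{n+1}-\call_n)\bar\Lambda_n$, $(e^{\{S_n,\cdot\}}-\id-\{S_n,\cdot\})G_n$ and analogous higher-order pieces --- all at least quadratic in $\e_n$. A secondary issue: your bound $\gamma_n\le \cachi^{n^\xi}$ is off by one exponential; the correct estimate (matching $\tD_n$ in the paper) is $\gamma_n\lesssim \exp\big(\mathrm{const}\cdot\cachi^{n^\xi}\big)$, consistent with the exponent inside the supremum defining $\tK$.
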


\begin{proof}[Proof of Theorem \ref{allaMoserbis}] Starting from the Hamiltonian $H$ satisfying \eqref{maipendanti}, we set $G_0 = H - D$ in \eqref{bisanzio}. The smallness conditions \eqref{gianna} are met, provided that we choose $\bar\epsilon$ and $\bar C$ appropriately. \\
Using \eqref{cosi} we define $\Psi$ as the limit of the $\Psi_n$ (which define a Cauchy sequence) and $\Lambda = \Lambda_0 = \sum_j \bar\Lambda_j < \infty$. Note that the series is summable by \eqref{intestino}. For more details see \cite[Section $6$]{BMP:almost}.
\end{proof}

\begin{proof}[Proof of the iterative Lemma]
We start with a Hamiltonian
	$H_0= D+ \Lambda_0 +G_0$
with $\Lambda_0\in \cH^{0,\cK}$ and $D$.

At the $n$'th step we have an expression of the form
\[
H_n=D+\pa{\id+\call_n}\Lambda_n+G_n,
\]
with $G_n\in \cH_{r_n,p_n}$.
\\
To proceed to the step $n+1$ we apply the change of variables $e^{\{S_n,\cdot\}}$. The generating function $S_n$ and the counter-term $\bar\Lambda_n$ are fixed as the unique solutions of the Homological equation
\begin{equation}\label{homo sapiens}
\Pi^{\le 0}  \pa{\set{S_n,D+G_n^{\ge 1}} +\pa{\id+\call_n}\bar\Lambda_n+G_n }= G_{n}^{(-2, \cK)}\,,
\end{equation}
recalling that $G_{n}^{(-1, \cK)}=0$ by \eqref{scalogno}.
This equation can be written component-wise as a triangular system and solved consequently.  Indeed
we have
\begin{align}
	& {\set{S_n^{(-2)},D} +\Pi^{-2,\cR}\call_n\bar\Lambda_n+G^{(-2,\cR)}_n }=0,\\
	&\set{S_n^{(-1)},D}+\Pi^{-1}\set{S_n^{(-2)},G_n^{\ge 1}} +\Pi^{-1}\call_n\bar\Lambda_n+G^{(-1)}_n =0,\\
	\label{counter}
	&\Pi^{0,\cK}\set{S_n^{(-2)}+ S_n^{(-1)},G_n^{\ge 1}} +\bar{\Lambda}_n+\Pi^{0,\cK}\call_n\bar\Lambda_n+G^{(0,\cK)}_n=0,\\
	&\set{S_n^{(0,\cR)},D}+\Pi^{0,\cR}\set{S_n^{(-2)}+ S_n^{(-1)},G_n^{\ge 1}} +\Pi^{0,\cR}\call_n\bar\Lambda_n+G^{(0,\cR)}_n = 0\,.
\end{align}
We start by solving the equations for $S_n$ it "modulo $\bar\Lambda_n$", then we determine the counter-term by inversion of an appropriate linear operator resulting from inserting the equations for $S_n$ into equation \eqref{counter}. \\
We hence have by Proposition \ref{mah}
\begin{align}
	& S_n^{(-2)}= L_{\betta}^{-1} \pa{\Pi^{-2}\call_n\bar\Lambda_n+G^{(-2)}_n },\label{ostrica}\\
	&S_n^{(-1)}= L_{\betta}^{-1}\pa{\Pi^{-1}\set{
	S_n^{(-2)},
	G_n^{\ge 1}} +\Pi^{-1}\call_n\bar\Lambda_n+G^{(-1)}_n},\nonumber \\
	& S_n^{(0,\cR)} = L_{\betta}^{-1} \pa{\Pi^{0,\cR}\set{S_n^{(-2)}+ S_n^{(-1)},G_n^{\ge 1}} +\Pi^{0,\cR}\call_n\bar\Lambda_n+G^{(0,\cR)}_n }\,.\nonumber
\end{align}
Plugging them into \eqref{counter} we thus get
\begin{align*}
	&\Pi^{0,\cK}\set{L_{\betta}^{-1} \pa{\Pi^{\leq -1}\call_n\bar\Lambda_n  +\Pi^{-1}\set{L_{\betta}^{-1} {\Pi^{-2}\call_n\bar\Lambda_n },G_n^{\ge 1}}},G_n^{\ge 1}} +\bar{\Lambda}_n+\Pi^{0,\cK}\call_n\bar\Lambda_n=\\
	& -\Pi^{0,\cK}\set{L_{\betta}^{-1} \pa{G^{\leq -1}_n  +\Pi^{-1}\set{L_{\betta}^{-1} {G^{(-2)}_n },G_n^{\ge 1}}},G_n^{\ge 1}}-G^{(0,\cK)}_n\,.
\end{align*}
Note that the left hand side of the equation above can be written as $(\id+ M_n)\bar{\Lambda}_n$, where  $M_n: \cH^{0,\cK}\to \cH^{0,\cK}$  is 
\begin{equation}\label{lingua}
\begin{aligned}
M_n h:= \Pi^{0,\cK}\set{L_{\betta}^{-1} \pa{\Pi^{\leq -1}\call_n h  +\Pi^{-1}\set{L_{\betta}^{-1} {\Pi^{-2}\call_n h },G_n^{\ge 1}}},G_n^{\ge 1}} +\Pi^{0,\cK}\call_n h\,.
\end{aligned}
\end{equation}
Similarly to Lemma 6.2 of \cite{BMP:almost} one has 
\begin{equation}\label{orte}
\norma{M_n h}_{\infty} \le \norma{ h}_{\infty}/2\,.
\end{equation}

In order to prove \eqref{orte}, we treat the three  summands of $M_n$ separately, we recall
that by \eqref{ritornello}, \eqref{fringe} and  by the smallness condition in \eqref{gianna}
\[
\|{\pon \mathcal{L}_n} h\|_{\infty}\le 3 \|{ \mathcal{L}_n} h\|_{r_n,p_n}\le 3 \tK (1+\Theta)^2\e_0\sum_{j=1}^n 2^{-j} \|h\|_{\infty}< \frac16 \|h\|_{\infty}.
\]
In  the other  summands  we use the identification $  \|\Pi^{0,\cK} F\|_\infty  \le 3\|F\|_{r',p'} $ for any $r',p'$ (see  formulas \eqref{silvestre} and \eqref{ritornello}   such that
$r'\ge r \sqrt{2}$ and $p'\le p$.
\\
We have
 by \eqref{che palle},  \eqref{ritornello},  Propositions \ref{fan} and \ref{mah} 
\begin{equation} \label{otricoli}
\begin{aligned}
	&	\| \pon \set{ \Pi^{\leq -1} L^{-1}_\omega \mathcal{L}_n{h},G_n^{\geq 1}}\|_{\infty} \le 3
	\|\set{ \Pi^{\leq -1} L^{-1}_\omega \mathcal{L}_n{h},G_n^{\geq 1}}\|_{\sqrt 2 r,p} 
	\\
	&
	\stackrel{\eqref{commXHK},\eqref{pestob}}\le 
	 120 \norma{\Pi^{\leq -1} L^{-1}_\omega \mathcal{L}_n{h}}_{r_n, p}
	\norma{G_n^{\geq 1}}_{r_n, p}
	\stackrel{\eqref{xhx-i}}\leq
	 240 \g \norma{ L^{-1}_\omega \mathcal{L}_n{h}}_{r_n, p }\Theta_n
	 \\
	&
	\stackrel{\eqref{newton},\eqref{pestob},\eqref{ss148}}
	\le 240 
\exp\left(  \exp\left(\frac{c}{(3\delta)^{1/\eta}}\right)\right)\norma{  \mathcal{L}_n{h}}_{r_n,p_n} \Theta_n \\
&\stackrel{\eqref{gianna},\eqref{fringe}}{\leq }
\frac16
 \tK^2 \e_0(1+\Theta_0)^3 \|h\|_{\infty}
 \stackrel{\eqref{gianna}}\leq \frac16 \|h\|_{\infty}
 \,,
\end{aligned}
\end{equation}
where, in order to control the exponential term, we used the definition of
$\tK$ given in \eqref{gianna}.
\\
Now we estimate the remaining term in \eqref{lingua}.
We have,
again by \eqref{che palle}, \eqref{ritornello}, \eqref{xhx-i}, Proposition \ref{fan} and Lemma \eqref{mah} 
\begin{eqnarray*}
&&\| \Pi^{0,\cK}\set{L_{\betta}^{-1} \Pi^{-1}\set{L_{\betta}^{-1} {\Pi^{-2}\call_n h },G_n^{\ge 1}},G_n^{\ge 1}}\|_\infty\\
&&\leq 3\|\set{L_{\betta}^{-1} \Pi^{-1}\set{L_{\betta}^{-1} {\Pi^{-2}\call_n h },G_n^{\ge 1}},G_n^{\ge 1}}\|_{\sqrt 2 r,p}
\\
&&
\stackrel{\eqref{commXHK},\eqref{pestob}}\le 
400\|L_{\betta}^{-1} \Pi^{-1}\set{L_{\betta}^{-1} {\Pi^{-2}\call_n h },G_n^{\ge 1}}\|_{3r/2,p}\|G_n^{\ge 1}\|_{3r/2,p}
\\
&&
\stackrel{\eqref{xhx-i}}\leq
	 400 \g 
	\|L_{\betta}^{-1} \Pi^{-1}\set{L_{\betta}^{-1} {\Pi^{-2}\call_n h },G_n^{\ge 1}}\|_{3r/2,p} 
	\ \Theta_n
	 \\
&& \stackrel{\eqref{ritornello},\eqref{ss148}}
\le \!\!\!\!\!\!\!\!
400  \exp\left( c \exp\left(\left(\frac{4}{3 \delta }\right)^{1/\eta}\right)\right)\|\set{L_{\betta}^{-1} {\Pi^{-2}\call_n h },G_n^{\ge 1}}\|_{3r/2,(p+ p_n)/2}\ \Theta_n
	 \\
&& \stackrel{\eqref{commXHK},\eqref{xhx-i}}
\le 
\ 2^{14}\g\ \exp\left( c \exp\left(\left(\frac{4}{3 \delta }\right)^{1/\eta}\right)\right)
\|L_{\betta}^{-1} {\Pi^{-2}\call_n h }\|_{r_n,(p+ p_n)/2}\ \Theta_n^2
	 \\
&& \stackrel{\eqref{ritornello},\eqref{ss148}}
\le 
2^{14}\  \exp\left(2 c \exp\left(\left(\frac{4}{3 \delta }\right)^{1/\eta}\right)\right)
\| \call_n h \|_{r_n,p_n}\ \Theta_n^2\\ &&
\stackrel{\eqref{gianna},\eqref{fringe}}{\leq }
\frac16
 \tK^2 \e_0(1+\Theta_0)^4 \|h\|_{\infty}
 \stackrel{\eqref{gianna}}\leq \frac16 \|h\|_{\infty},
\end{eqnarray*}
noting that
by \eqref{newton} and \eqref{pestob}
\[
r_n-\frac32 r\geq r_\infty-\frac32 r\geq \frac{r}{4}\,,
\qquad
\frac{ p - p_n}2\geq \frac{p-p_\infty}2  \ge  \frac32 \delta, 
\]
and estimating the exponential term as in \eqref{otricoli}.
This concludes the proof of
\eqref{orte}.

Then we have that:
 \begin{equation}
 \label{lambdozzo}
\begin{aligned}
\bar{\Lambda}_n 
=&-(\id +M_n)^{-1}\
\Pi^{0,\cK}\big\{L_{\betta}^{-1} \pa{G^{\leq -1}_n \Pi^{-1}\set{L_{\betta}^{-1} {G^{(-2)}_n },G_n^{\ge 1}}},G_n^{\ge 1}\big\}\\ &
-(\id +M_n)^{-1} G^{(0,\cK)}_n
\end{aligned}
 \end{equation}
 is well defined.
 In order to prove \eqref{intestino} we split \eqref{lambdozzo} in three terms and note that by \eqref{orte}
 the operator norm of $(\id +M_n)^{-1}$ is bounded by 2.
 Regarding the first one we obtain 
 \begin{align*}
	&	\|(\id +M_n)^{-1} \pon \set{  L^{-1}_\omega G_n^{\le -1},G_n^{\geq 1}}\|_{\infty} \le 6
	\|\set{  L^{-1}_\omega G_n^{\le -1},G_n^{\geq 1}}\|_{\sqrt 2 r,p} 
	\\
	&
	\stackrel{\eqref{commXHK},\eqref{pestob}}\le 
	 240 \norma{ L^{-1}_\omega G_n^{\le -1}}_{r_n, p}
	\norma{G_n^{\geq 1}}_{r_n, p}
	\stackrel{\eqref{xhx-i}}\leq
	 240 \g \norma{ L^{-1}_\omega G_n^{\le -1}}_{r_n, p }\Theta_n
	 \\
	&
	\stackrel{\eqref{newton},\eqref{pestob},\eqref{ss148}}\le 240
\exp\left( c \exp\left(\left(\frac{2}{3\delta}\right)^{1/\eta}\right)\right)\norma{  G_n^{\le -1}}_{r_n,p_n} \Theta_n \\ &
\stackrel{\eqref{gianna},\eqref{xhx-i},\eqref{fringe}}
\leq 
\frac14
 \tK \g \e_n   (1+\Theta_0)
 \,,
\end{align*}
taking $\frak C$ large enough here and below.
 Regarding the second term we get
 \begin{eqnarray*}
&&\| (\id +M_n)^{-1}\Pi^{0,\cK}\set{L_{\betta}^{-1} \Pi^{-1}\set{L_{\betta}^{-1} {G^{(-2)}_n },G_n^{\ge 1}},G_n^{\ge 1}}\|_\infty\\
&&
\leq 6\|\set{L_{\betta}^{-1} \Pi^{-1}\set{L_{\betta}^{-1} {G^{(-2)}_n },G_n^{\ge 1}},G_n^{\ge 1}}\|_{\sqrt 2 r,p}
\\
&&
\stackrel{\eqref{commXHK},\eqref{pestob}}\le 
800\|L_{\betta}^{-1} \Pi^{-1}\set{L_{\betta}^{-1} {G^{(-2)}_n },G_n^{\ge 1}}\|_{3r/2,p}\|G_n^{\ge 1}\|_{3r/2,p}
\\
&&
\stackrel{\eqref{xhx-i}}\leq
	 800 \g 
	\|L_{\betta}^{-1} \Pi^{-1}\set{L_{\betta}^{-1} {G^{(-2)}_n },G_n^{\ge 1}}\|_{3r/2,p} 
	\ \Theta_n
	 \\
&& \stackrel{\eqref{ritornello},\eqref{ss148}}
\le \!\!\!\!\!\!
800  \exp\left(  \exp\left(c\left(\frac{2}{3 \delta }\right)^{1/\eta}\right)\right)\|\set{L_{\betta}^{-1} {G^{(-2)}_n },G_n^{\ge 1}}\|_{3r/2,(p+ p_n)/2}\ \Theta_n
	 \\
&& \stackrel{\eqref{commXHK},\eqref{xhx-i}}
\le 
\ 2^{15}\g\ \exp\left( \exp\left(c\left(\frac{2}{3 \delta }\right)^{1/\eta}\right)\right)
\|L_{\betta}^{-1} {G^{(-2)}_n }\|_{r_n,(p+ p_n)/2}\ \Theta_n^2
	 \\
&& \stackrel{\eqref{ritornello},\eqref{ss148}}
\le 
2^{15}\  \exp\left(2 \exp\left(c\left(\frac{2}{3 \delta }\right)^{1/\eta}\right)\right)
\| G^{(-2)}_n \|_{r_n,p_n}\ \Theta_n^2\\ &&
\stackrel{\eqref{gianna},\eqref{xhx-i},\eqref{fringe}}{\leq }
\frac14
 \tK\g \e_n (1+\Theta_0)^2\,.
\end{eqnarray*}
Finally for the third term we get  
$$
\|(\id+M_n)^{-1}G_n^{0,\cK}\|_\infty
\stackrel{\eqref{xhx-i}}{\leq }
  2\g \e_n,
$$
 so \eqref{intestino} follows.
%

By substituting  in the equations \eqref{ostrica} 
 we get the final expressions for $S^{(-2)}_{n}$ and $S^{(-1)}_n$ and finally $S^{(0,\cR)}_n$ which by
\eqref{commXHK}, \eqref{ss148}, \eqref{xhx-i}, \eqref{fringe}, \eqref{gianna} and \eqref{intestino}
 yield  the estimates 
\begin{align}
\|\due{S_n}\|_{r_n, p_n + \delta_n} &\le  (1+\tK^2(1+\Theta_0)^4 \e_0) \tD_n \eps_n
\leq 2 \tD_n \eps_n
\nonumber
\\ 
\|S_n^{(-1)}\|_{r_n - \rho_n, p_n + 2\delta_n}  
&\le 
(1+16 r_0 \rho_n^{-1}\tD_n \Theta_0)2 \tD_n\eps_n 
\nonumber
\\
\|S_n^{(0)}\|_{r_n - 2\rho_n, p_n + 3\delta_n} 
& \le 
(1+16 r_0 \rho_n^{-1}\tD_n \Theta_0)^2 3 \tD_n\eps_n 
\label{coratella}
\end{align}
where 
 $$
 \tD_n :=  
 \exp\left(  \exp\left(\frac{c}{\delta_n^{1/\eta}}\right)\right)
 \leq
  \exp\left( \frac13 \cachi^{n^\xi}\right)
 $$ 
 ($ \cachi$ and $\xi$ were defined in \eqref{gianna})
 systematically using the inductive hypothesis and the first bound  in \eqref{gianna}.
 The final bound thus reads (recall \eqref{amaroni} and \eqref{pestob}) 
 
 \begin{equation}
 \label{pasta frolla}
 \|S_n\|_{r_n - 2\rho_n, p_{n+1}} 
 \leq   
 \frac{r_0^2}{\rho^2}4^{n+8}
 \exp\left( \cachi^{n^\xi}\right)
 \eps_n (1 + \Theta_0)^2
 \stackrel{\eqref{gianna},\eqref{en}}\leq
\frac{\rho}{2^{2n+10}r_0}\,.
 \end{equation}
 Then we can apply Proposition \ref{ham flow} since \eqref{stima generatrice} is satisfied by $S_n$
 with $\rho\to\rho_n$ and $r\to r_{n+1}.$
 Then  item (1) of Lemma \ref{iterativo} is easily  proved. In particular
 \eqref{ln} follows by \eqref{pollon} and \eqref{pasta frolla}
 (for complete details see the analogous proof of \cite[Lemma 6.1]{BMP:almost}).

  Regarding item (2), by construction we have
 $$
 \call_{n+1} - \call_{n} = \pa{e^\set{S_n,\cdot} - \id} \circ (\call_n + \id),
 $$
 hence by \eqref{caio}, \eqref{stima generatrice}, \eqref{amaroni} and \eqref{silvestre}
 \begin{equation}\label{dattero}
\begin{aligned}
 \norma{ (\call_{n+1} - \call_{n})h }_{r_{n+1}, p_{n+1}} 
\leq &
\frac{r_0 2^{n+9}}{\rho}
\|S_n\|_{r_n-2\rho_n,p_{n+1}} 
\|(\call_n + \id)h\|_{r_n-2\rho_n,p_{n+1}} 
\\
\stackrel{\eqref{pasta frolla},\eqref{gianna},\eqref{fringe}}\leq
 &\frac{r_0^3}{\rho^3}8^{n+9} 
 \exp\left(\cachi^{n^\xi}\right)
 \eps_n (1 + \Theta_0)^2 \norma{h}_\infty,
\end{aligned}
 \end{equation}
 which by \eqref{gianna} proves \eqref{fringe}.

As for the expression of $G_{n+1}$, by \eqref{cioccolato} and \eqref{emorroidi} we have 
\[G_{n+1} = e^{\set{S_n,\cdot}} H_n - \sq{D_\betta + \pa{\id + \call_{n+1}}\Lambda_{n+1}}\,.
\]
Since $S_n$ solves the Homological equation \eqref{homo sapiens}, 
we have that by \eqref{emorroidi}
\begin{align}\label{schifo al cazzo}
G_{n+1} &= \dueK{G_n} + G_n^{\ge 1}+\Pi^{\ge 1}\pa{\call_{n+1} \bar{\Lambda}_n+\set{S_n,G_n^{\ge 1}}}  + G_{n+1,*},
	\\ G_{n+1,*} &=  \{S_n,G_n^{\le 0}\} 
	+ \Pi^{\le 0}\pa{\call_{n+1}-\call_n}\bar{\Lambda}_n+
	\pa{e^{\{S_n,\cdot\}} - \id - \set{S_n,\cdot}}
	G_n  \nonumber \\
	&  - \sum_{h=2}^\infty \frac{ (\ad S_n)^{h-1}}{h!} \pa{\Pi^{\le 0}\pa{\id + \call_n}\bar\Lambda_n + G_n^{\le 0} +\Pi^{\le 0} \{S_n^{(-1)} + \due{S_n},G_n^{\ge 1}\} }, \nonumber
\end{align}
using that 
$\Pi^{\le 0}  \pa{\set{S_n,D}}=\pa{\set{S_n,D}}$ by \eqref{creep}
and that $\set{S_n,  G_n^{(-2,\cK)}}=0$.
Note  that $G_{n+1,*}$ is  quadratic in $S_n \sim G_n^{\le 0}$.
\\
Recalling \eqref{amaroni}, \eqref{pestob}, \eqref{commXHK}, \eqref{ritornello}, \eqref{gianna}, 
\eqref{xhx-i}, \eqref{fringe}, \eqref{intestino} and 
Proposition \ref{ham flow}, which can be applied by \eqref{pasta frolla}, we have
\begin{align*}
& \norma{G_{n+1,*}}_{r_{n+1}, p_{n+1}} 
 \stackrel{\eqref{dattero}}\leq 
\frac{2^{n+6}r_0}{\rho}
\|
S_n
\|_{r_n-2\rho_n,p_{n+1}}
\g\eps_n 
\\
& + \frac{r_0^3}{\rho^3}8^{n+9} 
 \exp\left(\cachi^{n^\xi}\right)
 \eps_n (1 + \Theta_0)^2
 \norma{\bar{\Lambda}_n}_\infty
+\frac{2^{2n+21}r_0^2}{\rho^2}
\|
S_n
\|_{r_n-2\rho_n,p_{n+1}}^2
\g\Theta_0 \\ 
& +
\frac{2^{n+10}r_0}{\rho}
\|
\Pi^{\le 0}\pa{\id + \call_n}\bar\Lambda_n + G_n^{\le 0} \\
& +\Pi^{\le 0} \{S_n^{(-1)} + \due{S_n},G_n^{\ge 1}\}
\|_{r_n-2\rho_n,p_{n+1}}
\|
S_n
\|_{r_n-2\rho_n,p_{n+1}}
\\
&\stackrel{\eqref{pasta frolla},\eqref{coratella}}\leq   
2^{6n+55}
 \frac{r_0^6}{\rho^6}
 \exp\left( 2\cachi^{n^\xi}\right)
 \eps_n^2 (1 + \Theta_0)^5\g
 \\
&+
\frac{2^{n+24}r_0}{\rho}
\left(\|
\bar\Lambda_n\|_\infty + \g \eps_n +2^n\frac{r_0^2}{\rho^2} \Theta_0^2 \eps_n
 \exp\left( \cachi^{n^\xi}\right)
 \g
\right)
\|
S_n
\|_{r_n-2\rho_n,p_{n+1}}
 \\
&\stackrel{\eqref{pasta frolla},\eqref{intestino}}\leq 
2^{6n+55}
 \frac{r_0^6}{\rho^6}
 \left(\exp\left( \cachi^{n^\xi}\right)+\tK\right)
 \exp\left( \cachi^{n^\xi}\right)
 \eps_n^2 (1 + \Theta_0)^5\g
  \\
&\stackrel{\eqref{gianna}}\leq 
2^{57}
\frac{\g}{\tK^2}
 \left(\exp\left( \cachi^{n^\xi}\right)+1\right)
 e^{-\chi^{n+1}/2}
 \eps_n 
 \leq 
 2^{60}
\frac{\g}{\tK}e^{-\chi^{n+1}}\eps_0
\leq \g e^{-\chi^{n+1}}\eps_0\,,
\end{align*}

taking ${\frak C}$ large enough.
Recalling \eqref{xhx-i} and \eqref{schifo al cazzo} 
this implies\footnote{Recall that the semi-norm of the constant term $\dueK{G_n} $ is zero.} the first estimates in \eqref{en}.
\\
By the above estimate and
recalling  \eqref{xhx-i} and \eqref{schifo al cazzo} we get
\begin{align*}
& \Theta_{n+1}
\leq 
 \e_{n+1}+\Theta_n+e^{-\chi^{n+1}}\eps_0
+\g^{-1}\|\Pi^{\ge 1}\pa{\call_{n+1} \bar{\Lambda}_n+\set{S_n,G_n^{\ge 1}}}\|_{r_{n+1},p_{n+1}}
\\
&\leq
\Theta_n+4 e^{-\chi^{n+1}}\eps_0
+
2^4\e_n
+2^{n+7}r_0\rho^{-1}
\|
S_n
\|_{r_n-2\rho_n,p_{n+1}}\Theta_0
\leq \Theta_n+\Theta_0 2^{-n-1},
\end{align*}
again by \eqref{ritornello}, \eqref{gianna}, \eqref{fringe}, \eqref{intestino}, \eqref{commXHK}, \eqref{pasta frolla}.
This finally implies the second estimates in \eqref{en}.
\\
The analyticity of $\Psi_n$ and $\Lambda_n$ follows by Proposition \ref{mah}, point (ii)
 of Proposition
\ref{proiettotutto} and 
recalling Remark \ref{cotechino}.
\end{proof}

\section{Measure estimates}\label{bellogrosso}

\begin{lemma}[Measure estimates]\label{misurino} The set
$\cC$
defined in \eqref{arancini} satisfies \eqref{mirto}, namely
\begin{equation*}
\begin{aligned}
	&{\rm meas}_{\cQ_\cS}(\cQ_\cS \setminus \cC(\VSc,I,\g))
	 \le C_0\g\,,\\
	&{\rm meas}_{[-\nicefrac14,\nicefrac14]^\cS} \Big(\VSf\big(\cQ_\cS \setminus \cC(\VSc,I,\g),\VSc,I\big)\Big) \le C_0\g\,.
	\end{aligned}
	\end{equation*} 
\end{lemma}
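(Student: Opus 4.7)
The plan is to write $\cQ_\cS \setminus \cC(\VSc, I, \gamma) = \bigcup_\ell R_\ell$ with
\[
R_\ell := \{\nu \in \cQ_\cS : |\omega(\nu, \VSc, I) \cdot \ell| < \gamma \wtd(\ell)\},\qquad \wtd(\ell) := \prod_{s \in \cS}\bigl(1+|\ell_s|^2\langle i(s)\rangle^2\bigr)^{-3/2},
\]
the countable union running over integer vectors $\ell$ satisfying the constraints of Definition \ref{diomichela}. A preliminary observation via Lemma \ref{carciofo} is that $\ell_\cS \neq 0$ for any such $\ell$: if $\ell_\cS = 0$ then $|\ell_{\cS^c}| \le 2$ combined with $\fm(\ell) = \pi(\ell) = 0$ forces $\ell = 0$.

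For each fixed $\ell$, I would bound $\meas_{\cQ_\cS}(R_\ell)$ by a Fubini argument: pick $s_0 \in \cS$ with $\ell_{s_0} \neq 0$; freezing the remaining tangential variables and using the Lipschitz bound \eqref{batacchio} together with $|\ell_{\cS^c}| \le 2$, the slope of $\nu_{s_0} \mapsto \omega(\nu) \cdot \ell$ is bounded below by $|\ell_{s_0}| - 2C\eps \ge 1/2$ for $\eps$ sufficiently small. This yields $\meas_{\cQ_\cS}(R_\ell) \le C\gamma\wtd(\ell)$, reducing the first bound in \eqref{mirto} to showing $\sum_\ell \wtd(\ell)$ is bounded by an absolute constant.

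The main technical difficulty lies in this summability, which I would prove by splitting by the size of $\ell_{\cS^c}$. When $|\ell_{\cS^c}|\le 1$, the momentum/mass constraints leave at most two choices of $\ell_{\cS^c}$ per $\ell_\cS$, and the product identity
\[
\sum_{\ell_\cS \in \Z^\cS}\wtd(\ell_\cS) \le \prod_{i \in \N}\Bigl(1 + 2\sum_{k \ge 1}(1+k^2 \langle i \rangle^2)^{-3/2}\Bigr) \le \prod_{i}(1+C\langle i\rangle^{-3}) < \infty
\]
closes these cases. The delicate case is $|\ell_{\cS^c}| = 2$, where the momentum relation $j_1 \pm j_2 = \mp \pi(\ell_\cS)$ a priori admits infinitely many $(j_1, j_2) \in (\cS^c)^2$. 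The crucial dispersive input is that $R_\ell \neq \emptyset$ forces
\[
\Bigl|\sum_{j \in \Z} j^2 \ell_j\Bigr| \le |\ell_\cS|/2 + C,
\]
since $|\omega_j - j^2| \le 1/2$ for all $j$ implies $|\omega \cdot \ell| \ge |\sum_j j^2 \ell_j| - |\ell|/2$. Combined with the momentum constraint, this restricts $(j_1, j_2)$ to a set of polynomial size in $\sum_s s^2|\ell_s|$. The resulting summation of $\wtd(\ell_\cS)$ against this dispersive counting factor then converges through a careful analysis supported by the admissibility property \eqref{cicoria}--\eqref{mustgoon}: the super-polynomial growth of $s(i)$ makes the weight $\wtd$ decay fast enough in the sparse-site indices $\langle i(s)\rangle$ to absorb the counting factor. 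This is the principal obstacle of the proof and precisely where the sparseness of $\cS$ plays its crucial role.

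For the second estimate in \eqref{mirto}, I would run the same argument in the $V$-coordinates. By \eqref{visciola} together with the Lipschitz bounds \eqref{loredana} and \eqref{berte}, the map $\nu \mapsto V = \VSf(\nu, \VSc, I)$ differs from the affine translation $\VSs$ by a $C\gamma\eps$-small Lipschitz perturbation, hence is a bi-Lipschitz homeomorphism onto its image with linearization close to the identity. Parameterizing $\omega \cdot \ell$ through $\nu = \VSf^{-1}(V)$, the slope in the $V_{s_0}$-direction is still $\ell_{s_0} + O(\eps) \ge 1/2$, and the same Fubini/Lipschitz estimate yields $\meas_{[-\nicefrac14, \nicefrac14]^\cS}(\VSf(R_\ell) \cap [-\nicefrac14, \nicefrac14]^\cS) \le C\gamma\wtd(\ell)$. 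Summing via the estimate established in the previous paragraph then reproduces the second bound.
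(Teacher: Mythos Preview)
Your overall structure is right, but there are two genuine gaps.

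\textbf{Counting of normal sites.} In the case $|\ell_{\cS^c}|=2$ you claim the dispersive constraint restricts $(j_1,j_2)$ to a set of size polynomial in $\sum_s s^2|\ell_s|$, and then invoke the admissibility of $\cS$ to absorb this factor. Both steps are wrong. First, the weight $\wtd(\ell)$ involves $\langle i(s)\rangle$, which by \eqref{cicoria} grows \emph{sub-polynomially} in $s$; it cannot compensate any power of $s$, so sparseness of $\cS$ is of no help here. Second, and more importantly, the counting is much sharper: for each value of the slack $h:=\sigma_1 j_1^2+\sigma_2 j_2^2+\tq(\ell_\cS)$, the pair of equations $\sigma_1 j_1+\sigma_2 j_2=-\pi(\ell_\cS)$ and $\sigma_1 j_1^2+\sigma_2 j_2^2=-\tq(\ell_\cS)+h$ has at most two integer solutions, and the non-emptiness of $R_\ell$ forces $|h|<|\ell|\le |\ell_\cS|+2$. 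Hence the number of admissible $(j_1,j_2,\sigma_1,\sigma_2)$ is $O(|\ell_\cS|)$, a factor that is absorbed by $|\ell_\cS|+2\le 2\prod_i(1+|\ell_{s(i)}|^2\langle i\rangle^2)^{1/2}$, leaving the convergent product $\prod_i(1+|k_i|^2\langle i\rangle^2)^{-1}$. The sparseness of $\cS$ plays \emph{no role} in Lemma~\ref{misurino}; it enters only in the homological equation (Proposition~\ref{mah}).

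\textbf{Second estimate.} Your single-coordinate argument fails. Writing $\nu=q+V_\cS+g(V_\cS)$ with $g$ an $O(\eps)$-Lipschitz map in $\ell^\infty$, moving only $V_{s_0}$ perturbs \emph{every} $\nu_s$ by $O(\eps)$, so the change in $\nu\cdot\ell_\cS$ picks up an error $O(\eps|\ell_\cS|)$, not $O(\eps)$. For large $|\ell_\cS|$ the slope $\ell_{s_0}+O(\eps|\ell_\cS|)$ need not be bounded below. The paper fixes this by moving in the direction $\sigma_{\bar\cS}=(\mathrm{sign}\,\ell_s)_{s\in\bar\cS}$: the main term becomes $\sigma_{\bar\cS}\cdot\ell_{\bar\cS}=|\ell_\cS|$, the error stays $O(\eps|\ell_\cS|)$, and a finite-dimensional Fubini lemma (Lemma~\ref{zufolo}) converts the resulting strip bound into $\meas'\big(\VSf(\cR_\ell)\big)\le 16\gamma\,\wtd(\ell)$.
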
 

\begin{proof}
We start proving the first estimate in \eqref{mirto}.
\\
	Take $\gamma\leq 1/2.$
For $\ell\ne 0$ with $|\ell|<\infty$, $\sum_{s\in \cS^c} |\ell_s|\le 2$, $\pi(\ell)=0$ and $\fm(\ell)=0$, set for simplicity $\cR_\ell=\cR_\ell(\VSc,I,\gamma)$ and 
 define the 
resonant set
\begin{equation}\label{unbraccio}
\cR_\ell:=\set{\nu\in \cQ_{\cS}:	|\o(\nu,\VSc,I)\cdot \ell|\le 
\gamma \prod_{s\in \cS}\frac{1}{(1+|\ell_s|^2 \langle i(s)\rangle^2)^{3/2}}=:\gamma\gatta(\ell)
}.
\end{equation}
Recalling  the definition of $\o(\nu,\VSc,I)$ in \eqref{arancini} 
we note that by the continuity with respect to the product topology
of the functions  
	$\nu\mapsto\bO_j(\nu,\VSc,I)$
	with $ j\in \cS^c\,,\, I\in  \mathcal I(p,\rr)$
	(and since $|\ell|<\infty$)
	we have the crucial fact that
	 the sets  $\cR_\ell$ are closed (w.r.t. the product topology) and, therefore,  {\it measurable with respect to the  
	product probability measure on
	$\cQ_{\cS}$}. Moreover, since $\cQ_{\cS}$ is compact 
	all the $\cR_\ell$ are compact too.
	\\
	In the following we will often omit to write the immaterial dependence
	on $\VSc$ and $I$.
	\\
	Set $\tq(\ell):= \sum_{s\in \N} s^2 \ell_s.$
	If $|\tq(\ell)|\geq |\ell|$ 
	then 
	\[
	|\omega\cdot \ell|\ge | \sum_{s\in \N} s^2 \ell_s|  - \frac12|\ell|\ge\frac12 |\ell| \ge \frac12
	\]
	and 	 $\cR_{\ell}=\emptyset.$ 
	Recall the definition of the set $\cC$
	given in \eqref{arancini} and \eqref{diofantino nuc}.
	Denoting by $\meas$ the product probability measure on $\cQ_\cS$, we have
	\begin{equation}\label{labirinto}
	\cQ_\cS\setminus\cC = \bigcup_{\ell\in A} \mathcal R_\ell\,,\qquad 
	\mbox{which implies}
	\qquad 
	\meas(\cQ_\cS\setminus\cC) \le \sum_{\ell\in A} \meas(\mathcal R_\ell)\,,
	\end{equation}
	where
		\begin{equation}\label{dedalo}
	A:=\left\{ \ell\in\Z^\Z\ :\   0<|\ell|<\infty,\   \sum_{s\in \cS^c}  |\ell_s|\le 2,\ 
	  \fm(\ell)=\pi(\ell)= 0, \  |\tq(\ell)| <|\ell|
	\right\}\,.
	\end{equation}
	Fix $\ell\in A.$
	We note that 
	\begin{equation}\label{vacca}
\exists\, \bar s=\bar s(\ell)\in\cS\quad
\mbox{(depending only on $\ell$) such that }
\ell_{\bar s}\neq 0,
\end{equation}
	otherwise $0<|\ell|=\sum_{s\in \cS^c}|\ell_s|\le 2$, which contradicts Lemma
	\ref{carciofo}.
Since $\omega(\nu)= (\nu,\Omega(\nu))$, we get
	\[
	 |t|^{-1}| (\omega(\nu + t e_{\bar s})\cdot\ell)
	- (\omega(\nu)\cdot\ell)
	  |\ge |\ell_{\bar s}|-2\sup_{s\in \cS^c}|\Omega_s|^{\rm lip}\stackrel{\eqref{batacchio}}\ge 
	 1-2C\e\geq 1/2\,,
	\]
	taking $\e_*(p_*)$ small enough in \eqref{cornettonep}.
	Set $\hat\nu=(\nu_s)_{s\neq \bar s}$. Then for every $\hat\nu$
	there exist $a_\ell(\hat\nu)=a_\ell(\hat\nu,\VSc,I,\gamma)<
	b_\ell(\hat\nu)=b_\ell(\hat\nu,\VSc,I,\gamma)$
	satisfying
	\begin{equation}\label{cannoli}
	\big\{ \nu_{\bar s}\ \ \mbox{s.t.}\ \ (\nu_{\bar s},\hat\nu)\in \mathcal R_\ell\big\}
	\subseteq \big(a_\ell(\hat\nu),b_\ell(\hat\nu)\big)\,,\qquad
	  \mbox{with}\ \  
	b_\ell(\hat\nu)-a_\ell(\hat\nu)\leq 4\gamma \gatta(\ell)\,,
	\end{equation}
which implies
	$$
	\meas\{ \nu_{\bar s}\ \ \mbox{s.t}\ \ (\nu_{\bar s},\hat\nu)\in \mathcal R_\ell\}
	\leq 4\gamma \gatta(\ell)\,.
	$$ 
	Since $\mathcal R_\ell$ is measurable, by Fubini's Theorem
	we have that 
	$$
	\meas(\mathcal R_\ell) \le 
	4\gamma \gatta(\ell)
	=
	4\gamma \prod_{s\in \cS}\frac{1}{(1+|\ell_s|^2 \langle i(s)\rangle^2)^{3/2}}
	=
	4\gamma \prod_{i\in\N}\frac{1}{(1+|\ell_{s(i)}|^2 \langle i\rangle^2)^{3/2}}
	\,.
	$$
	Therefore 
	\begin{equation}\label{istanbul}
	\meas(\cQ_\cS\setminus\cC) \le
	4\gamma \sum_{\ell\in A}
	 \prod_{i\in\N}\frac{1}{(1+|\ell_{s(i)}|^2 \langle i\rangle^2)^{3/2}}
	\,.
\end{equation}
	We claim that
	\begin{equation}\label{coperta}
	\sum_{\ell\in A} \gatta(\ell)=
\sum_{\ell\in A}
	 \prod_{i\in\N}\frac{1}{(1+|\ell_{s(i)}|^2 \langle i\rangle^2)^{3/2}}
	 \leq \frac{C_0}{17}\,,
\end{equation}
taking $C_0$ large enough.
	
	Given $k\in\Z^\N$ with $|k|<\infty$ we define $\ell^k\in\Z^\Z$
	 supported on $\cS$ setting
	$$
	\ell^k_{s}:=k_{i(s)}\,,\ \ {\rm for}\ \ s\in\cS\,,\ \  \ell_s=0\ \ {\rm for}\ \ s\notin\cS\,.
	$$
	Now for each $\ell\in A$ there exist  unique $k\in\Z^\N$ with $|k|<\infty$
	and
	$s_1,s_2\in \cS^c$ and $\s_1,\s_2=\pm 1,0$ such that
	\begin{equation}\label{apapaia}
\ell = \ell^k+ \s_1 \be_{s_1} + \s_2 \be_{s_2}\,.
\end{equation}
	On the other hand, given $k\in\Z^\N$ with $|k|<\infty$, 
	there exist at most $36(|k|+2)$ vectors $\ell\in A$ satisfying \eqref{apapaia}.
	Indeed we prove that, given  $\s_1,\s_2=\pm 1,0$, there exist at 
	most\footnote{Note that there are 9 possible choices of $\s_1,\s_2=\pm 1,0$.}
	 $4(|k|+2)$ couples
	$(s_1,s_2)\in\cS^c\times\cS^c$ such that $\ell$ in \eqref{apapaia} 
	belongs to $A$.
Indeed they have to satisfy
	\[
\begin{cases}
\s_1s_1+\s_2s_2=-\pi(\ell^k) \\ 
\s_1s_1^2+\s_2s^2_2 = - \tq(\ell^k) + h\,, \qquad {\rm for \ some\ \ }|h|<|\ell| \leq |k|+2\,.
\end{cases}	
\]
Then by \eqref{istanbul} we get
$$
\sum_{\ell\in A}
	 \prod_{i\in\N}\frac{1}{(1+|\ell_{s(i)}|^2 \langle i\rangle^2)^{3/2}}
	\le 36\g	 \sum_{k\in \Z^\N:  0<|k|<\infty} 
	(|k|+2) \prod_{i\in \N}\frac{1}{(1+|k_i|^2 \langle i\rangle^2)^{3/2}} \,.
	$$
Since
\[
|k|+2 \le  2\prod_{i\in \N}(1+|k_i|^2 \langle i\rangle^2)^{1/2}
\]
we get
$$
\sum_{\ell\in A}
	 \prod_{i\in\N}\frac{1}{(1+|\ell_{s(i)}|^2 \langle i\rangle^2)^{3/2}}
 \le 72\g	 \sum_{k\in \Z^\N:  0<|k|<\infty} 
	 \prod_{i\in \N}\frac{1}{1+|k_i|^2 \langle i\rangle^2 }\,.
	$$
where the last sum converges (see \cite{Bourgain:2005} or Lemma 4.1 of \cite{BMP:2019}).
This concludes the proof of \eqref{coperta} taking $C_0$ large enough and, by \eqref{istanbul}, the proof of the first estimate in \eqref{mirto}.

\medskip

Let us now prove the second estimate in \eqref{mirto}.
Let us denote for brevity 
\begin{equation}\label{provolazzo}
{\rm meas}'(E):=
{\rm meas}_{[-\nicefrac14,\nicefrac14]^\cS}(E\cap [-\nicefrac14,\nicefrac14]^\cS)
\end{equation}
the  product probability measure on $[-\nicefrac14,\nicefrac14]^\cS$.
Since $\VSf$ (defined in \eqref{labello}) is 
   Lipschitz  $C\e\g$-close to the map
  $\VSs$
 (defined in \eqref{piramide})
 we have that
$$
 \VSf(\cQ_\cS,\VSc,I')\supset [-\nicefrac14,\nicefrac14]^\cS
$$
for every
$\VSc\in [-\nicefrac14,\nicefrac14]^{\cS^c}$
and\footnote{In the present proof we actually take $I'=I$.
We have introduced $I'$ for estimate \eqref{nastrorosa}
that will be used in the proof of Lemma \ref{culonembo}.} 
$I'\in \mathcal I(p,\rr)$.
 The function 
 $\VSf(\nu,\VSc,I)$ is invertible (w.r.t. $\nu$).
  In particular there exists
 a function 
 \begin{equation}\label{tanfo}
 \begin{aligned}
 g\, :\,  [-\nicefrac38,\nicefrac38]^{\cS}\times [-\nicefrac14,\nicefrac14]^{\cS^c}\times \mathcal I(p,\rr) & \longrightarrow 
 [-4 \bar C \g \epsilon,4 \bar C \g \epsilon]^\cS \\
  (\VS,\VSc,I)  &\mapsto\  g(\VS,\VSc,I)
	\end{aligned}
\end{equation}
 which is continuous w.r.t. the product topology
 in $ [-\nicefrac38,\nicefrac38]^{\cS}\times [-\nicefrac14,\nicefrac14]^{\cS^c}$ and 
 satisfies
\begin{equation}\label{appesta}
\VSf\big(q+\VS+g(\VS,\VSc,I),\VSc,I\big)=\VS\,,
\qquad \mbox{where}\qquad
q=(q_j)_{j\in\cS}\,,\ \ \ q_j:=j^2\,,
\end{equation}
 with Lipschitz estimates
  \begin{equation}\label{fetore}
|g(V_\cS',\VSc,I)-g(\VS,\VSc,I)|_{\ell^\infty_{\cS}}
\leq 
6 \bar C  \epsilon
|V_\cS'-\VS|_{\ell^\infty_{\cS}}\,.
\end{equation} 
 Indeed recalling \eqref{visciola}
  $g$  satisfies the fixed point equation
  (e.g. by a slightly modified version of Lemma 
\ref{zagana})
 $$
 g=-\lambda\big(q+\VS+g, \bO(q+\VS+g,\VSc,I),I\big)\,;
 $$
then \eqref{tanfo} and \eqref{fetore} follow from \eqref{loredana}) (recalling \eqref{grevity}
 and \eqref{solenoide}).
  \\ 
  For the remaining of the proof we drop the dependence on $\VSc$ and $\gamma.$
  Fix $\ell\in A, $
first we note that  for all $I,I'\in \cI(p,\rr)$ the set $\VSf(\mathcal R_\ell(I);I')$
is measurable since 
$\mathcal R_\ell(I)$ is closed and $\VSf$ is continuously invertible.
 Recalling \eqref{unbraccio} we have
\begin{align*}
	& \VSf\big(\mathcal R_\ell(I),I'\big) 
	\cap [-\nicefrac14,\nicefrac14]^\cS
	=\big\{\VS\in[-\nicefrac14,\nicefrac14]^\cS: \,
	\exists\, \nu \in \mathcal R_\ell(I),   
	\VS=\VSf(\nu,I')
	\big\}
	\nonumber
\\
&	\stackrel{\eqref{appesta}}=
	\big\{\VS\in[-\nicefrac14,\nicefrac14]^\cS\ \mbox{s.t.}\ 	
	|\o\big(q+\VS+g(\VS,I'),I\big)\cdot \ell|\le 
\gamma\gatta(\ell)
\big\}=:\mathtt E
	\,.
	\label{foggydew}
	\end{align*}
As usual we set $\ell=:(\ell_\cS,\ell_{\cS^c})$.
Let $\bar \cS:=\{j\in\cS \ :\ \ell_j\neq 0\}.$
We split $\ell_\cS=:(\ell_{\bar \cS},0)$.
\\
Decomposing $\VS=(V_{\bar \cS},V_{\cS\setminus\bar\cS})$
we set
$$
\mathtt E(V_{\cS\setminus\bar\cS}):=\{
V_{\bar \cS} \in 
[-\nicefrac14,\nicefrac14]^{\bar\cS}\ \ :\ \ 
(V_{\bar \cS},V_{\cS\setminus\bar\cS})\in\mathtt E\}\,.
$$
We claim that for every $V_{\cS\setminus\bar\cS}\in
[-\nicefrac14,\nicefrac14]^{\cS\setminus\bar\cS}
$
\begin{equation}\label{eire}
\meas_{[-\nicefrac14,\nicefrac14]^{\bar\cS}}
\Big(\mathtt E(V_{\cS\setminus\bar\cS})\Big)
\leq
16\g \gatta(\ell)
\,.
\end{equation}
Then by Fubini's theorem we get
\begin{equation}\label{nastrorosa}
	   {\rm meas}_{[-\nicefrac14,\nicefrac14]^\cS}(\mathtt E)
	   =
   {\rm meas}' \big(
   \VSf\big(\mathcal R_\ell(I),I'\big)
   \big)\leq
16\g \gatta(\ell)\,.
   \end{equation}
Then
$$
{\rm meas}_{[-\nicefrac14,\nicefrac14]^\cS} \Big(\VSf\big(\cQ_\cS \setminus \cC(I),I'\big)\Big)
\leq
	\sum_{\ell\in A} 16\g \gatta(\ell)
$$
and the second estimate in \eqref{mirto} follows by
\eqref{coperta} and taking $I'=I$.

Let us finally prove the claim in \eqref{eire}.
Fix $V_{\cS\setminus\bar\cS}\in
[-\nicefrac14,\nicefrac14]^{\cS\setminus\bar\cS}
$.
Set for brevity\footnote{Recall \eqref{arancini} and \eqref{appesta}.}
\begin{eqnarray*}
f(V_{\bar \cS})
&:=&
\o\big(q+\VS+g(\VS,I'),I\big)\cdot \ell
\\
&\stackrel{\eqref{arancini}}=&
V_{\bar\cS}\cdot \ell_{\bar \cS}+
(q+g(\VS,I'))\cdot \ell_{\cS}+
\Omega\big(q+\VS+g(\VS,I'),I\big)\cdot \ell_{\cS^c}\,.
\end{eqnarray*}	
The aim now is to prove that the increment of $f$
{\sl in a suitable direction} is bounded from below.
Since $\ell\in A$ by \eqref{vacca} we get $|\ell_\cS|\geq 1$ and
$|\ell_{\cS^c}|\leq 2$.
Define  $\sigma_{\bar \cS}$ by
$\sigma_{\bar\cS,j}={\rm sign}\, \ell_j$ for $j\in\bar\cS;$
then  $\sigma_{\bar\cS}\cdot \ell_{\bar\cS}=
|\ell_{\bar\cS}|=
|\ell_\cS|\geq 1$ and $|\sigma_{\bar\cS}|_\infty=1$.
Then we get
$$
|t|^{-1}|f(V_{\bar \cS}+t \sigma_{\bar\cS})-f(V_{\bar \cS})|
\geq |\ell_\cS|-16 \bar C  \epsilon |\ell_\cS|
\geq |\ell_\cS|/2\,,
$$
by \eqref{fetore} and \eqref{martini} and  taking $\e_*(p_*)$ in  \eqref{cornettonep}
small enough such that
	$
	16\bar C\epsilon
	\leq 16 \bar C c(p_*)\e\leq 1/2
	$
	(recall \eqref{sade},\eqref{adu}).
Then we get that
$$
\meas\{ t\in\R \ :\  V_{\bar \cS}+t \sigma_{\bar\cS}\in \mathtt E(V_{\cS\setminus\bar\cS})\}\leq
4\g \gatta(\ell)/|\ell_\cS|\,.
$$
We need the following  
(finite dimensional) result proved in the Appendix.

\begin{lemma}\label{zufolo}
 Let $E\subset [-\nicefrac14,\nicefrac14]^n$ be a measurable set.
 Let $\xi=(\hat\xi,\pm 1)$ with $\hat\xi\in\R^{n-1}$.
 Assume that for every $x\in [-\nicefrac14,\nicefrac14]^n$ 
 we have\footnote{$\meas_\R$, resp.
$\meas_{\R^n}$,  being the standard   measure in $\R$,
resp $\R^n$.}
 $\meas_\R\{ t\in\R \ :\  x+t \xi\in E\}\leq \delta.$
 Then
 $\meas_{\R^n}(E) \leq 2^{1-n}\delta |\xi|_2^2, $ where $|\cdot|_2$
 denotes the Euclidean norm.
\end{lemma}
\noindent
Then \eqref{eire} follows by 
 the above lemma  with\footnote{Here we consider
the case when $\bar\ell_\cS=-1$ the $+1$ case is analogous.}
$$
E= \mathtt E(V_{\cS\setminus\bar\cS})\,,\ \ \ n=\sharp \bar\cS\,,\ \ \  
\xi=\sigma_{\bar\cS}\,,\ \ \ 
x=V_{\bar \cS}\,,\ \ \ 
\delta=4\g \gatta(\ell)
/|\ell_\cS|
$$
and noting that $|\sigma_\cS|_2^2=n\leq |\ell_\cS|$
and $\meas_{[-\nicefrac14,\nicefrac14]^{n}}(A)=2^n \meas_{\R^n}(A)$
for every $A\subseteq [-\nicefrac14,\nicefrac14]^{n}.$
\end{proof}

\begin{proof}[\bf{Proof of Corollary \ref{ga}}]
It is equivalent to prove that 
\begin{equation}\label{ovetto2}
		\cB(I,\g):= \{ V\equiv(\VS, \VSc)\in [-\nicefrac14,\nicefrac14]^\Z\;: \,\VS\in \VSf(\pan_\cS\setminus\cC(\VSc,I,\g),\VSc,I) \}\,
	\end{equation}
  is a Borel set in $[-\nicefrac14,\nicefrac14]^\Z$ with measure bounded by $C_0\g$ ($C_0$ is the constant in \eqref{mirto}).


\noindent
The function $h:[-\nicefrac14,\nicefrac14]^\Z\to \cQ_{\cS}\times
[-\nicefrac14,\nicefrac14]^{\cS^c}$ defined by\footnote{$g$ was defined in \eqref{tanfo}.
By \eqref{appesta} $h$ is the inverse of the function 
$(\nu,\VSc)\mapsto \big(\VSf(\nu,\VSc),\VSc\big)$.}
$$
h(V):=\big(q+\VS+g(\VS,\VSc,I),\VSc\big)
$$
is continuous with respect to the product topology.
Then, for every $\ell\in A$ (recall \eqref{dedalo}), the set
\begin{align*}
\cB_\ell(I,\g) 
& :=  \{ V\equiv(\VS, \VSc)\in [-\nicefrac14,\nicefrac14]^\Z\;: \, 
|\o(h(V))\cdot\ell|\leq \g \gatta(\ell)
 \}\\
 \stackrel{\eqref{unbraccio},\eqref{appesta}} = & 
 \{ V\equiv(\VS, \VSc)\in [-\nicefrac14,\nicefrac14]^\Z\;: \quad 
\VS\in  \VSf\big(\mathcal R_\ell(\VSc,I),\VSc,I\big)
 \}
\end{align*}
is closed and, therefore, measurable.
Then by Fubini's theorem and \eqref{nastrorosa} (with $I=I'$) we get
$
	   {\rm meas}_{[-\nicefrac14,\nicefrac14]^\Z}(\cB_\ell)
	   $ $
	   \leq
16\g \gatta(\ell).
  $
  Since
 $\cB$ defined in \eqref{ovetto2}  can be written as
$\cB=\cup_{\ell\in A}\cB_\ell$ (recall \eqref{labirinto}), by \eqref{coperta} we get
${\rm meas}_{[-\nicefrac14,\nicefrac14]^\Z}(\cB)$
$	   \leq
C_0\g.$

\end{proof}

\begin{proof}[\bf{Proof of Lemma \ref{culonembo}}]
The first estimate in \eqref{giuncata} is equivalent to
\begin{equation}\label{concallato}
{\rm meas}_{[-\nicefrac14,\nicefrac14]^\cS} \Big(\VSf\big(\cQ_\cS \setminus \cC'(\VSc,I,\g),\VSc,I\big)\Big) 
\le  C_0\g\,,
\end{equation}
where $C_0$ was defined in 
	Theorem \ref{toro Sobolevp}.

In the following we drop the dependence on $\VSc$. We first note that recalling 
 \eqref{unbraccio}, \eqref{dedalo} and setting $L:=C  \rr^{-2}\epsilon$, we get
\begin{equation}\label{caponata}
|I-I'|_{2p_*}\leq \gatta(\ell)/4L
\qquad\Longrightarrow \qquad
\cR_\ell(I',\gamma/2)
\subset
\cR_\ell(I,\gamma)\,,\qquad 
\forall\, \ell\in A\,.
\end{equation}
Indeed, since $\ell\in A$,
$$
|\omega(\nu,I)\cdot\ell|
\stackrel{\eqref{batacchio}}\leq 
|\omega(\nu,I')\cdot\ell|+2 \gamma L |I-I'|_{2p_*}
\leq
\gamma \gatta(\ell)/2+2 \gamma L |I-I'|_{2p_*}
\leq \gamma \gatta(\ell)\,.
$$
Therefore
\begin{eqnarray*}
\cQ_\cS \setminus \Big(
\cC(I,\gamma)\cap\cC(I',\gamma/2)
\Big)
&=&
\Big( \cQ_\cS \setminus 
\cC(I,\gamma)\Big)
\bigcup
\Big( \cQ_\cS \setminus 
\cC(I',\gamma/2)
\Big)
\\
&\stackrel{\eqref{labirinto},\eqref{caponata}}=&
\bigcup_{\ell \in A} \cR_\ell(I,\gamma) \!\!\!\!
\bigcup_{\substack{\ell \in A\,, \\ \gatta(\ell)<4L|I-I'|_{2p_*}}}\!\!\!\! \cR_\ell(I',\gamma/2)\,.
\end{eqnarray*}
Then
$$
\cQ_\cS \setminus\cC'(I,\gamma)
:=
\bigcup_{\ell \in A} \cR_\ell(I,\gamma)
\bigcup_{k\in\N} 
\bigcup_{\substack{\ell \in A\,, \\ \gatta(\ell)<4L|I-I^{(n_k)}|_{2p_*}}} \cR_\ell(I^{(n_k)},\gamma/2)
$$
and (recalling the notation in \eqref{provolazzo}) 
\begin{align*}
& {\rm meas}' \Big(\VSf\big(\cQ_\cS \setminus \cC'(I,\g);I\big)\Big)
\leq 
\sum_{\ell \in A}
{\rm meas}' \big(\VSf(\mathcal R_\ell(I,\gamma);I)\big) \\
& +
\sum_{k\in\N} 
\sum_{\substack{\ell \in A\,, \\ \gatta(\ell)<4L|I-I^{(n_k)}|_{2p_*}}}
{\rm meas}' \big(\VSf(\mathcal R_\ell(I^{(n_k)},\gamma);I)\big)
\\
\stackrel{\eqref{nastrorosa}} \leq & 
16\gamma 
\sum_{\ell \in A}
\gatta(\ell)
+
16\gamma 
\sum_{k\in\N} 
\sum_{\substack{\ell \in A\,, \\ \gatta(\ell)<4L|I-I^{(n_k)}|_{2p_*}}}
\gatta(\ell)
\leq
17\gamma 
\sum_{\ell \in A}
\gatta(\ell)
\stackrel{\eqref{coperta}}\leq
C\gamma\,,
\end{align*}
taking the sub-sequence $(n_k)_{k\in\N}$ growing fast enough\footnote{Since the positive term series in  \eqref{coperta} converges. }. This proves \eqref{concallato}.
The second estimate in \eqref{giuncata} follows from the first one and Fubini Theorem provided that $\cG(I,\g)$ is a Borel set.
This holds true since
\begin{align*}
		\mathcal G(I,\gamma)
&= \cap_k\{ V=(\VS,\VSc)\in[\nicefrac14,\nicefrac14]^\Z:\; \VS\in \VSf(\cC(I^{(n_k)},\g/2),\VSc,I^{(n_k)})\}
\\ &\cap \{ V=(\VS,\VSc)\in[\nicefrac14,\nicefrac14]^\Z\ \mbox{s.t.} \VS\in \VSf(\cC(I,\g),\VSc,I)\}
\\
&= [\nicefrac14,\nicefrac14]^\Z \setminus \pa{\cup_k  \cB(I^{(n_k)},\g/2)\cup \cB(I,\g)}\,.
\end{align*}
The Borel sets $\cB(I^{(n_k)},\g/2), \cB(I,\g)$ are defined in \eqref{ovetto2}.
\end{proof}

\appendix
\section{Technicalities}\label{appendice tecnica}

\begin{proof}[{\bf Proof of Lemma \ref{godiva}}]
Let us consider first the case $f\in {\rm F}(\cO\times B_\rho)$, 
namely when $f$ depends only on a finite number of $\o_j$'s. 
By definition this means that, for some  $k\in\N$
there exists a function 
$\hat f\,:\,P_k(\cO)\times B_\rho\,\to\, \C$,
 where $P_k$ is the projection 
 $P_k:\R^\Z\to\R^{2k+1}$ defined as $P_k(\o):=(\o_{-k},\ldots,\o_k),$
 such that  
 $f(\o,I)=\hat f(\o_{-k},\ldots,\o_k,I)$ for every $(\o,I)\in\cO\times B_\rho.$
By Cauchy estimates
$$
| \hat f(\o,I)-\hat f(\o,I')|\leq 
2 \rho^{-1}|f|^{\g,\cO\times B_\rho}|I-I'|_E\,,
\qquad
\forall\, \o\in\cO\,,\ \   I,I'\in B_{\rho/2}\,.
$$
We need the following 

\begin{lemma}[Lipschitz extension]\label{pinzellone}
 Let $X$ be a  metric space endowed with the metric $d(\cdot,\cdot)$ and $\emptyset\neq U\subseteq X$.
 \\
i) Let $f:U\to\R$  be a $L$-Lipschitz function,
namely $|f(u)-f(v)|\leq L d(u,v)$ for every $u,v\in U.$
Then 
$\bar f(x):=\inf_{u\in U} f(u)+L\, d(x,u)$, $x\in X,$
is a $L$-Lipschitz extension of $f$.
\\
ii) Moreover if $\sup_U|f|=:M<\infty$, 
then $\tilde f:=\max\{-M,\min\{\bar f,M\}\}$
is a $L$-Lipschitz extension of $f$ satisfying
$\sup_X|\tilde f|=M$.
\\
iii) Let $g:U\to\ell^\infty(\R)$ a $L$-Lipschitz function,
namely $|g(u)-g(v)|_{\ell^\infty}\leq L d(u,v)$ for every $u,v\in U.$
Then there exists a $L$-Lipschitz extension $\bar g:X\to \ell^\infty(\R)$ of $g$.
An analogous statement  holds for every $\ell^\infty$-weighted space, in particular for $\tw_p$.
\end{lemma}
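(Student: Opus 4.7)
The three items form a chain of standard McShane--Whitney extensions, and my plan is to establish (i) by the infimum convolution formula, deduce (ii) by composing with a $1$-Lipschitz truncation, and reduce (iii) to (i) applied componentwise. Everything rests on the triangle inequality; no abstract machinery is required.

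\textbf{Executing (i).} I would first show that for every $x\in X$ the expression $\bar f(x):=\inf_{u\in U}\bigl(f(u)+L\,d(x,u)\bigr)$ is a finite real number. Fix any $u_0\in U$; the $L$-Lipschitz bound gives $f(u)\ge f(u_0)-L\,d(u,u_0)$, which combined with the triangle inequality $d(u,u_0)\le d(x,u)+d(x,u_0)$ yields $f(u)+L\,d(x,u)\ge f(u_0)-L\,d(x,u_0)$, so the infimum is bounded below (and trivially bounded above by $f(u_0)+L\,d(x,u_0)$). To check $\bar f|_U=f$, for $x\in U$ the choice $u=x$ gives $\bar f(x)\le f(x)$, while $f(u)+L\,d(x,u)\ge f(x)$ for all $u\in U$ by the Lipschitz hypothesis, giving the reverse inequality. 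The Lipschitz estimate is the usual exchange argument: for $x,y\in X$ and $u\in U$ one has $f(u)+L\,d(x,u)\le f(u)+L\,d(y,u)+L\,d(x,y)$; taking $\inf_u$ and then swapping $x\leftrightarrow y$ gives $|\bar f(x)-\bar f(y)|\le L\,d(x,y)$.

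\textbf{Executing (ii) and (iii); main obstacle.} For (ii), I would observe that $\psi(s):=\max\{-M,\min\{s,M\}\}$ is a $1$-Lipschitz map of $\R$ to itself that fixes $[-M,M]$; since $\bar f(U)=f(U)\subseteq[-M,M]$ by hypothesis, $\tilde f:=\psi\circ\bar f$ is again an $L$-Lipschitz extension of $f$ and is bounded by $M$ on all of $X$. For (iii), each coordinate $g_j$ inherits $L$-Lipschitzness from the estimate $|g_j(u)-g_j(v)|\le|g(u)-g(v)|_{\ell^\infty}\le L\,d(u,v)$, so applying (i) coordinatewise produces functions $\bar g_j:X\to\R$, and I would set $\bar g(x):=(\bar g_j(x))_j$. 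The only point that requires genuine (though mild) care---and the closest thing to an obstacle---is verifying that $\bar g(x)$ really lives in $\ell^\infty$, i.e.\ that the extension is uniform in $j$. This is settled by the $j$-independent bound
\[
|\bar g_j(x)|\le|g_j(u_0)|+L\,d(x,u_0)\le |g(u_0)|_{\ell^\infty}+L\,d(x,u_0),
\]
valid for any fixed base point $u_0\in U$. The $\ell^\infty$-Lipschitz estimate $|\bar g(x)-\bar g(y)|_{\ell^\infty}=\sup_j|\bar g_j(x)-\bar g_j(y)|\le L\,d(x,y)$ is then immediate, and $\bar g|_U=g$ follows coordinatewise from (i). Finally, the $\tw_p$ version is obtained by transporting the $\ell^\infty$ statement through the isometric isomorphism $u\mapsto(u_j\jjap{j}^p)_j$, which preserves both the metric structure and the Lipschitz constant.
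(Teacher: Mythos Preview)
Your proof is correct and follows essentially the same route as the paper: part (i) is McShane's theorem (the paper simply cites it, while you spell out the argument), part (ii) is truncation by a $1$-Lipschitz map (the paper verifies this via the identity $\max\{g,c\}=(g+c+|g-c|)/2$, but the content is identical), and part (iii) is the componentwise extension together with the same base-point bound to secure membership in $\ell^\infty$.
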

\noindent
Before proving  Lemma  \ref{pinzellone}
we conclude the proof of Lemma \ref{godiva}.
Then Lemma  \ref{pinzellone} with
$X=P_k(\pan)\times B_{\rho/2}$,  distance
$d\big((\o,I),(\o',I')\big)=|\o-\o'|_\infty+2\g\rho^{-1}|I-I'|_E,$
 $U=P_k(\cO)\times B_{\rho/2}$,
 $f=\hat f$,
  $L=\gamma^{-1}|f|^{\g,\cO\times B_\rho}$
 implies Lemma \ref{godiva} in the case $f\in {\rm F}(\cO\times B_\rho)$. 
 Note in particular that, since $P_k(\pan)$ is finite dimensional
 the product topology on it coincides with one induced by the norm $|\cdot|_\infty$.
 Then  the extended function $\tilde f\in {\rm F}(	\pan\times B_{\rho/2})$
and its continuity in $\o$ follows by its Lipschitz-continuity in $\o$.
Moreover, in this case, 
$|\tilde f|^{\g,\cQ\times B_{\rho/2}}= |f|^{\g,\cO\times B_\rho}$
and
$$
|\tilde f(\o,I)-\tilde f(\o,I')|\leq 
2 \rho^{-1}|f|^{\g,\cO\times B_\rho}|I-I'|_E\,,
\qquad
\forall\, \o\in\pan\,,\ \   I,I'\in B_{\rho/2}\,.
$$
\\
Consider now the case of a general $f\in\cF(\cO\times B_\rho)$.
By definition there exists a sequence $f_n\in {\rm F}(\cO\times B_\rho)$ such that
$|f_n-f|^{\g,\cO\times B_\rho}\leq 4^{-n-1}|f|^{\g,\cO\times B_\rho}$ and
$g_0:=f_0$, $g_n:=f_n-f_{n-1}$ for $n\geq 1$.
Then
$f=\sum_{n\geq 0} g_n$. 
Moreover 
$|g_0|^{\g,\cO\times B_\rho}\leq 5/4|f|^{\g,\cO\times B_\rho},$
$|g_n|^{\g,\cO\times B_\rho}=|f_n-f|^{\g,\cO\times B_\rho}+|f-f_{n-1}|^{\g,\cO\times B_\rho}\leq 5\cdot 
4^{-n-1}|f|^{\g,\cO\times B_\rho}$
and $g_n\in {\rm F}(\cO\times B_\rho)$.
Then there exist extensions $\tilde g_n \in {\rm F}(\pan\times B_{\rho/2})$, $n\geq 0$ such that
$\tilde g_n=g_n$ on $\cO\times B_{\rho/2}$,
$|\tilde g_n|^{\g,\cQ\times B_{\rho/2}}\leq 5\cdot 
4^{-n-1}|f|^{\g,\cO\times B_\rho},$
$\tilde g_n$ is continuous w.r.t  the product topology in $\cO$, finally
$\tilde g_n$ is Lipschitz on  $B_{\rho/2}$ with estimate
$$
|\tilde g_n(\o,I)-\tilde g_n(\o,I')|\leq 
 10\rho^{-1}4^{-n-1}|f|^{\g,\cO\times B_\rho}|I-I'|_E\,,
\quad
\forall\, \o\in\pan\,,\ \   I,I'\in B_{\rho/2}\,.
$$
Finally set $\tilde f:=\sum_{n\geq 0}\tilde g_n.$
\end{proof}

\begin{proof}[{\bf Proof of Lemma \ref{pinzellone}}]

\noindent
i) It is McShane's Theorem. Incidentally we note that $\bar f$ is the greatest possible
extension while the smaller one is
$\underline f(x):=\sup_{u\in U} f(u)-L\, d(x,u)$.
\\
ii) The fact  that $\tilde f$ is $L$-Lipschitz follows applying twice the following result:
\\
{\it
 Given a $L$-Lipschitz $g:X\to\R$ and $c	\in\R$
 the functions $\bar g:=\max\{ g,c\}$ and $\underline g:=\min\{ g,c\}$
 are $L$-Lipschitz.
}
We prove it only for $\bar g$, the other case being analogous.
We first note that
$\bar g= (g+c+|g-c|)/2$. Then
$$
|\bar g(x)-\bar g(y)|\leq \frac12 \big(|g(x)-g(y)|+\big| |g(x)-c|-|g(y)-c|\big|\big)
\leq |g(x)-g(y)|\leq L d(x,y)\,.
$$
\\
iii)
By the McShane's Theorem in point i), every component $g_j:A\to \R$ 
of
$g$ can be extended to $\tilde g_j: X\to \R$ with the same 
Lipschitz constant $L$. Then, setting $\tilde g:=(\tilde g_j)_{j\in\Z}$,
we have that $\tilde g\in\ell^\infty$;
in fact, for every fixed $x\in X$, $x_0\in A$ and $j\in\Z$,
$$
|\tilde g_j(x)|\leq |\tilde g_j(x)-\tilde g_j(x_0)|+|g_j(x_0)|
\leq L d(x,x_0)+\|g(x_0)\|_{\ell^\infty}\,.
$$
Finally for every
$x_1,x_2\in X$ and $j\in\Z$, we have
that $|g_j(x_1)-g_j(x_2)|\leq L d(x,x_0)$ and, finally,
$|g(x_1)-g(x_2)|_{\ell^\infty} \leq L d(x,x_0)$.
The extension to $\ell^\infty(\C)$ and to
 every $\ell^\infty$-weighted space, in particular for $\tw_p$, is straightforward. 
\end{proof}

\noindent
{\bf Proof of Proposition \ref{fan}}. \\
\noindent
	Writing $F = \sum \bcoeffu{F}\uuu$ and $G = \sum\bcoeffd{G}\uud$ we have
	\begin{eqnarray*}
	\set{F,G} &=& \im\sum_{\baluno,\bbtuno,\baldue,\bbtdue} \bcoeffu{F}\bcoeffd{G} \sum_j \pa{\baluno_j \bbtdue_j  - \bbtuno_j \baldue_j }u^{\baluno + \baldue -e_j}\bar{u}^{\bbtuno + \bbtdue - e_j}\\
	&=:&H=\sum_{\bal,\bbt}H_{\bal,\bbt} u^\bal \bar{u}^\bbt\,,
	\end{eqnarray*}
where 
\begin{equation}\label{catta}
H_{\bal,\bbt}:= \im\sum_j \sum_{\substack{\baluno+\baldue -e_j=\bal \\\bbtuno+\bbtdue -e_j=\bbt}} \bcoeffu{F}\bcoeffd{G} \pa{\baluno_j \bbtdue_j  - \bbtuno_j \baldue_j }.
\end{equation}
Note that $\set{F,G}(0) = 0$,  indeed $H_{\bf{0},\bf{0}} = 0$ since $\bal'+ \bal'' = \bbt' + \bbt'' = e_j$ (namely $ \bal'_j + \bal''_j = \bbt'_j + \bbt''_j = 1$ and $\bal'_k + \bal''_k = \bbt'_k + \bbt''_k = 0 $ for $k\neq j$) implies that $\baluno_j \bbtdue_j  - \bbtuno_j \baldue_j = 0$ by mass conservation $|\bal| = |\bbt|$. \\  
Recalling \eqref{norma1} we have	
	\begin{equation}\label{frisa}
	\begin{aligned}
& \abs{\set{F,G}}_{r,p} \leq \\
	 & \sup_\ell  
	\sum_j \sum_{\substack{\baluno,\bbtuno,\\\baldue,\bbtdue}} \!\!\!\!
	\abs{\bcoeffu{F}}\abs{\bcoeffd{G}}
	(\baluno_j \bbtdue_j+\baldue_j \bbtuno_j)
	(\bbtuno_\ell + \bbtdue_\ell )
 u_p^{ \baluno + \bbtuno - 2 e_{\ell} + \baldue + \bbtdue - 2e_j} 
	  	 \,,
	  	 \end{aligned}
	\end{equation}
	where $u_p=u_p(r)$ was defined in \eqref{giancarlo}.
We split in four terms the right hand side of \eqref{frisa}
according to the splitting
$$
(\baluno_j \bbtdue_j+\baldue_j \bbtuno_j)
	( \bbtuno_\ell + \bbtdue_\ell )
	=\baluno_j \bbtdue_j \bbtuno_\ell 
	+
	\baluno_j \bbtdue_j \bbtdue_\ell +
	\baldue_j \bbtuno_j \bbtuno_\ell 
	+\baldue_j \bbtuno_j \bbtdue_\ell \,;
$$
	we will consider only the term
		\begin{eqnarray}\label{frisa2}
	&&
	\nonumber 
	A:=  \sup_\ell  
	\sum_j \sum_{\baluno,\bbtuno,\baldue,\bbtdue} 
	\abs{\bcoeffu{F}\bcoeffd{G}}
	\baluno_j \bbtdue_j
	 \bbtuno_\ell 
\times	  u_p^{ \baluno + \bbtuno - 2 e_{\ell} + \baldue + \bbtdue - 2e_j} 
	  	 \,,
	\end{eqnarray}
	the others being analogous.
	Noting that 
	 $\forall j\in\Z$, 
	\[
	\sum_{\baldue,\bbtdue}\abs{\bcoeffd{G}}\bbtdue_ju_p^{ \baldue + \bbtdue - 2e_j}\leq \abs{G}_{r,p},
	\] 
	we have
	\begin{align*}
	A &\leq \abs{G}_{r,p} 
	\sup_\ell   
	\sum_{\baluno,\bbtuno}
	\abs{\bcoeffu{F}}\sum_j{\baluno_j }\bbtuno_\ell
	 u_p^{ \baluno + \bbtuno - 2e_\ell}
	 \\
	&=
	\abs{G}_{r,p} 
	\sup_\ell  
	\sum_{\baluno,\bbtuno} 
	\abs{\bcoeffu{F}}\abs{\baluno }
	\bbtuno_\ell
	u_p^{ \baluno + \bbtuno - 2e_\ell}
	\\
	&\leq \abs{G}_{r,p} \sup_\ell  
	\sum_{\baluno,\bbtuno} 
	\abs{\bcoeffu{F}}
	\bbtuno_\ell
	\tilde u_p^{ \baluno + \bbtuno - 2e_\ell}
	\abs{\baluno + \bbtuno}
	\pa{\frac{r}{r+\rho}}^{\abs{\baluno + \bbtuno} - 2}	\,,
	\end{align*}
	where $\tilde u_p$ is short for $u_p(r + \rho)$ (recall \eqref{giancarlo}).
	Since\footnote{Indeed, setting $y:=\rho/r,$ we have that
	$\sup_{x\ge 2} x\pa{\frac{r}{r+\rho}}^{x - 2}=\sup_{x\ge 2} x(1+y)^{2-x}=2$
	if $y\geq \sqrt e-1$. On the other hand, when $0< y <\sqrt e-1$ we have 
 		$$
	 \sup_{x\ge 2} x(1+y)^{2-x}=
	 \frac{(1+y)^2}{e\ln (1+y)}
	 \leq\frac{1}{y} \sup_{0< y <\sqrt e-1}	 \frac{(1+y)^2 y}{e\ln (1+y)}
	 =
	 \frac{2}{y}\,.
	$$
	} 
	$$
	\sup_{|\baluno|=|\bbtuno|}
	\abs{\baluno + \bbtuno}
	\pa{\frac{r}{r+\rho}}^{\abs{\baluno + \bbtuno} - 2}
	\leq
	\sup_{x\ge 2} x\pa{\frac{r}{r+\rho}}^{x - 2}
	\leq 
	2 \max\set{1, \frac{r}{\rho} }
	$$
 we get 
	\begin{align*}
	A &\leq 2 \max\set{1,\frac{r}{\rho}} \abs{G}_{r,p} \sup_\ell   \sum_{\baluno,\bbtuno}\abs{\bcoeffu{F}}
	\bbtuno_\ell\tilde u_p^{\baluno + \bbtuno - 2e_\ell},
	\end{align*}
	and, recalling the definition in \eqref{norma1}, this concludes the proof of \eqref{menate}.
\qed

\begin{rmk}\label{ariccia}
Note that Proposition \ref{fan} and its proof hold for any $\ell^\infty$-weighted norm, namely with norm $\sup_{j\in\Z} w_j |u_j|$,
with $w_j\to\infty.$ 
\end{rmk}

\noindent
\begin{proof}[{\bf Proof of Lemma \ref{mente}}]Let us first prove the Banach structure. Consider a Cauchy sequence of Hamiltonians $H^{(n)}\in \cH^{\cO,0}_{r,p}$. For all $\omega\in \cO$, $I\in \cI$, $H^{(n)}(\omega,I)$ is a Cauchy sequence in $\scH_{r,p}^{0}$ then we define 
\[
H(\omega,I)= \sum_{\bal,\bbt\in \cM} H_{\bal,\bbt}(\omega,I) u^\bal\bar u^\bbt  \in \cH^{0}_{r,p}
\]
for all $\omega\in \cO$ $I\in \cI$, one has point-wise convergence $H^{(n)}(\omega,I)\to H(\omega,I)\in \scH_{r,p}$.
Moreover,  for all $\bal,\bbt$ the sequence $H^{(n)}_{\bal,\bbt}$ is a Cauchy sequence w.r.t the norm $|\cdot|^\g$. 
Since $\cF(\cO\times \cI)$ is  Banach, for all $\bal,\bbt\in \cM$
$H^{(n)}_{\bal,\bbt}\to H_{\bal,\bbt}\in \cF(\cO\times \cI)$. 
\\
By hypothesis $\forall\e>0$  there exist $N$ such that for all $n,m>N$ one has
\[
\frac12  \sup_j  
\sum_{(\bal,\bbt)\in \cM}|H^{(n)}_{\bal,\bbt}- H^{(m)}_{\bal,\bbt}|^\g \pa{ \bal_j + \bbt_j}u_p^{\bal + \bbt - 2e_j} =
\| H^{(n)}- H^{(m)}\|_{r,p}< \e
\]
so taking the liminf on $m$  we get  for all $j$
\[
\frac12 \liminf_m 
\sum_{(\bal,\bbt)\in \cM}|H^{(n)}_{\bal,\bbt}- H^{(m)}_{\bal,\bbt}|^\g \pa{ \bal_j + \bbt_j}u_p^{\bal + \bbt - 2e_j} < \e.
\]
Then, for all $\bal,\bbt$ one has
\begin{equation}
	\label{accidia}
|H^{(n)}_{\bal,\bbt}- H_{\bal,\bbt}|^\g  \le \liminf_m |H^{(n)}_{\bal,\bbt}- H^{(m)}_{\bal,\bbt}|^\g 
\end{equation}
so
\[
\begin{aligned}
& \sum_{(\bal,\bbt)\in \cM}|H^{(n)}_{\bal,\bbt}- H_{\bal,\bbt}|^\g \pa{ \bal_j + \bbt_j}u_p^{\bal + \bbt - 2e_j} \\ 
&\le \sum_{(\bal,\bbt)\in \cM} \liminf_m |H^{(n)}_{\bal,\bbt}- H^{(m)}_{\bal,\bbt}|^\g  \pa{ \bal_j + \bbt_j}u_p^{\bal + \bbt - 2e_j}\\
& \le \liminf_m \sum_{(\bal,\bbt)\in \cM}  |H^{(n)}_{\bal,\bbt}- H^{(m)}_{\bal,\bbt}|^\g  \pa{ \bal_j + \bbt_j}u_p^{\bal + \bbt - 2e_j} < \e\,,
\end{aligned}
\]
by Fatou's lemma. Taking the supremum over $j$ we have proved that $H^{(n)}\to H$ in the $\cH^{\cO,0}_{r,p}$ norm.\\
Concerning the Poisson algebra property, it suffices to use the fact that $\norm{\cdot}^\g$ has the algebra property with respect to standard multiplication and from \eqref{catta} we deduce 
\begin{equation}
\norm{H_{\bal,\bbt}}^\g \le\sum_j \sum_{\substack{\baluno+\baldue -e_j=\bal \\\bbtuno+\bbtdue -e_j=\bbt}} \norm{\bcoeffu{F}}^\g\norm{\bcoeffd{G}}^\g \pa{\baluno_j \bbtdue_j  + \bbtuno_j \baldue_j }.
\end{equation}
Then the proof follows verbatim the one of Proposition \ref{fan}.
\end{proof}
\bigskip

\begin{proof}[{\bf Proof of Proposition \ref{proiettotutto}}]
Items (i) and (ii) directly follow by Propositions 4.1 and 4.2 of \cite{BMP:almost}, respectively.
Here we only discuss 
the analyticity with respect to $I$.
By formula (4.30) in 
and recalling the notations in \eqref{vitello}
we get the representation formula for every $ |u|_{p}\leq r'$
\begin{equation}\label{cippalippa3}
\begin{aligned}
& H^{(d)}(u,\o,I)=
 \sum_{\substack{\al,\bt,\zeta, a,b \\ 2|\zeta|+|a|+|b|\leq d+2}}\ 
\sum_{\substack{\delta\succeq \zeta \\ 2|\delta|+|a|+|b|=d+2}}
\sum_{m\succeq \delta} A\\
&A=\binom{m}{\delta}
 \binom{\delta}{\zeta}
(-1)^{|\delta-\zeta|}
I^{m-\zeta}
 H_{m,\al,\bt,a,b}(\o,I)
 |v|^{2\zeta}
 v^\al \bar v^\bt z^a \bar z^b
\,,
\end{aligned}
\end{equation}
where $m,\al,\bt,\zeta,\delta\in \N^\cS, a,b\in\N^{\cS^c}, \al\cap \bt= \emptyset$.

Set for brevity $X:=X_{\Pi^{d} H}$
the Hamiltonian vector field of $\Pi^{d} H$
(recall \eqref{grado falso}).
Then
define $Y$ component-wise for $j\in\Z$ as
\begin{equation*}
Y_j
:=
\frac12 \!\!\!\!\!\!\!
\sum_{\substack{\al,\bt,\zeta, a,b \\ 2|\zeta|+|a|+|b|\leq d+2}}\ 
\sum_{\substack{\delta\succeq \zeta \\ 2|\delta|+|a|+|b|=d+2}}\ 
\sum_{m\succeq \delta}
\binom{m}{\delta}
 \binom{\delta}{\zeta}
I_p^{m-\zeta}
| H_{m,\al,\bt,a,b}|^\g
 \xi_j
 u_p^{\xi-2e_j}
\,,
 \end{equation*}
 with
 $\xi:=2\zeta+\al+\bt+a+b$
 and
  again $m,\al,\bt,\zeta,\delta\in \N^\cS, a,b\in\N^{\cS^c}, \al\cap \bt= \emptyset$ and
 where $u_p=u_p(r')$ was defined in \eqref{giancarlo}
 and $I_p:= u_p^2/2$.
By the formula after (4.30) in \cite{BMP:almost} we have
$Y\in\tw_p$ with $|Y|_{p}\leq 3^{\frac{d}{2}+1}
|H|_{r,p}$ and  $|X_j(u,\o,I)|\leq Y_j$
for every $j\in\Z,$ 
$u\in B_{r'}(\tw_p)$, $\o\in\cO$,
$I\in \mathcal I(p,r)$ (resp. $I\in \mathcal I(p,r,\C)$ in the complex case).
Therefore $H^{(d)}$
is analytic in   
$\mathcal I(p,r)$ (resp. $I\in \mathcal I(p,r,\C)$ in the complex case)
since can be written in a totally (a fortiori uniformly) convergent series
(see e.g. Theorem 2, Appendix A of  \cite{PT}).
\end{proof}

\medskip

\begin{proof}[{\bf Proof of Lemma  \ref{luchino}}]\footnote{Note that in Lemma C.4 of \cite{BMP:2019}
was stated the following erroneous result: 
{\it	Let $0<a<1$ and $x_1\geq x_2\geq \ldots\geq x_N\geq 2.$ Then
	$
	\frac{\sum_{1\leq\ell\leq N} x_\ell}{\prod_{1\leq\ell\leq N} x_\ell^a}
	\leq 
	x_1^{1-a}+\frac{2}{a x_1^a}\,.
	$
} Anyway in \cite{BMP:2019} we only had used the above result with $a=1$, which is exactly the content of Lemma \ref{luchino}.
}
By induction over $N$.
The result is obvious for $N=1$. 
Let assume it for $N$ and show it for $N+1$.
We have  
	\begin{eqnarray*}
	&&\frac{\sum_{1\leq\ell\leq N+1} x_\ell}{\prod_{1\leq\ell\leq N+1} \sqrt{x_\ell}}
	\leq
	\frac{(\sum_{1\leq\ell\leq N} x_\ell) +x_{N+1}}{(\prod_{1\leq\ell\leq N} \sqrt{x_\ell})\sqrt{x_{N+1}}} 
	\\
	&\leq& 
	\left(\sqrt{x_1}+\frac{4}{ \sqrt{x_1}}\right)
	\frac{1}{\sqrt{x_{N+1}}}+\frac{\sqrt{x_{N+1}}}{\sqrt 2}\,.
	\end{eqnarray*}
	since $x_1\geq x_2\geq \ldots\geq x_N\geq 2$
	implies  $\prod_{1\leq\ell\leq N} \sqrt{x_\ell}\geq\sqrt 2.$
	It remains to prove that
	$$
\left(\sqrt{x_1}+\frac{4}{ \sqrt{x_1}}\right)
	\frac{1}{\sqrt{x_{N+1}}}+\frac{\sqrt{x_{N+1}}}{\sqrt 2}	
	\leq
	\sqrt{x_1}+\frac{4}{ \sqrt{x_1}}\,.
	$$
	Denoting $t:=\sqrt{x_1}$ and $s:=\sqrt{x_{N+1}}$,
	the above inequality is equivalent to 
	$$
	f(t,s):=2 t^2 s-\sqrt 2 t s^2+8s-2 t^2-8\geq 0
	$$
	for $\sqrt 2\leq s\leq t.$ Since $f$ is a concave function of $s$ we have that
	$$f(t,s)\geq \min\{ f(t,\sqrt 2), f(t,t)\}\,.$$
	It is immediate to see that 
	$$
	\min_{t\geq \sqrt 2} f(t,\sqrt 2)=7\sqrt 2 -9>0\,,
	\qquad
	\min_{t\geq \sqrt 2} f(t,t)=12\sqrt 2-16>0\,,
	$$
	showing that $f(t,s)>0$ for $\sqrt 2\leq s\leq t$
	and concluding the proof.
\end{proof}

\medskip

\begin{proof}[{\bf Proof of Lemma \ref{granita}}]
Let $\bar\delta:= \frac{\delta}{9}$.
We split the sum in \eqref{cioli} into two terms $A_k=A_k^*+A_k^>,$ with
$$
\begin{aligned}
&A_k^*:=\sum_{i< i_*: k_i\geq 1} - \bar\delta  k_i\log\jjap{s(i)} + \log\pa{1 + \jap{i}^2 k_i^2}\,,
\\
&A_k^>:=\sum_{i_*\leq i\leq i_\tM: k_i\geq 1} - \bar\delta  k_i\log\jjap{s(i)} + \log\pa{1 + \jap{i}^2 k_i^2}\,.
\end{aligned}
$$

Regarding the first term we get, recalling that $s(i)\geq i,$
\begin{eqnarray*}
A_k^*
&\leq&
\sum_{i< i_*: k_i\geq 1} - \bar\delta  k_i\log\jjap{i} +1+ 2\log \jap{i} +2 \log k_i
\\&\leq &
8 i_*\log i_* - \!\!\!\!\!\!
\sum_{i< i_*: k_i\geq 1} \!\!\!\!((\bar\delta\log 2)  k_i -2 \log k_i)
\leq 
8 i_*\left(
\log i_*+
\log\frac{1}{\bar\delta}
\right)\,,
\end{eqnarray*}
using that $\max_{x\geq 1} - (\bar\delta\log 2)  x +2 \log x\leq 2 \log 1/\bar\delta.$
\\
Consider now the second term. By \eqref{cicoria} we get
\begin{eqnarray*}
A_k^>
&\leq&
\sum_{i_*\leq i\leq i_\tM, k_i\geq 1} - \bar\delta  k_i\log^{1+\eta}|i|
 + \log\pa{1 + \jap{i}^2 k_i^2}
 \leq
 A_{k,1}^>+A_{k,2}^>\,,
\end{eqnarray*}
where
$$
 A_{k,1}^>:=
\sum_{i_*\leq i\le i_\tM, k_i\geq 28/\bar\delta} 2f(i,k_i)\,,
\qquad
A_{k,2}^>:=
\sum_{i_*\leq i\le i_\tM, 1\leq k_i< 28/\bar\delta} 2f(i,k_i)\,,
$$
and\footnote{Recall that $i_*\geq 3$.}
\begin{equation}\label{vita}
f(i,k):=- \bar\delta  k\log^{1+\eta}i +3\log i+2\log (\bar\delta k)+2\log(1/\bar\delta)\,.
\end{equation}
When $k_i\geq 28/\bar\delta$ we have\footnote{Using that
$-\frac14 x+2\log x\leq 0$ for $x\geq 28$ and $\log i\geq \log i_*\geq 1.$}
$$
\begin{aligned}
f(i,k_i)
&\leq
- \frac12\bar\delta  k_i\log^{1+\eta}i 
+2\log (\bar\delta k_i)+2\log(1/\bar\delta)\\
&\leq 
- \frac14\bar\delta  k_i\log^{1+\eta}i 
+2\log(1/\bar\delta)
\leq 
- 7\log^{1+\eta}i 
+2\log(1/\bar\delta)
\,,
\end{aligned}
$$
which is negative for $i\geq 1/\bar\delta.$ Then we get
$
A_{k,1}^>
\leq
\frac{4}{\bar\delta}\log(1/\bar\delta)\,.
$
\\
We finally  consider the term $A_{k,2}^>$, namely 
when $k_i< 28/\bar\delta$. In this case
$$
f(i,k_i)\leq \Big(3- \bar\delta  \log^{\eta}i \Big)\log i+7+2\log(1/\bar\delta)\,.
$$
We claim that the last quantity is negative for $i\geq \exp(4\bar\delta^{-1/\eta}).$
Indeed, since $\eta\leq 2$
$$
f(i,k_i)\leq -\frac{4}{\sqrt{\bar\delta}}+7+2\log(1/\bar\delta)
\leq 0
$$
for $\bar\delta\leq 1/2.$
On the other hand for  $i< \exp(4\bar\delta^{-1/\eta})$ we have
$$
f(i,k_i)\leq \frac{12}{\bar\delta}+7+2\log(1/\bar\delta)
\leq  \frac{20}{\bar\delta}\,.
$$
So we finally get
$$
A_{k,2}^>
\leq
\frac{40}{\bar\delta}\exp(4\bar\delta^{-1/\eta})
\leq 40\exp(5\bar\delta^{-1/\eta})\,.
$$
\end{proof}

\medskip

\begin{proof}[{\bf Proof of Lemma \ref{zufolo}}]
To fix ideas we consider the case 
$\xi_n=-1$.
Set $x=:(\hat x,x_n)$.
Let us introduce the portion of hyperplane  (which is a graph over $\hat x$)
$$
P:=\{(\hat x,\hat\xi\cdot\hat x)\,:\ \ \hat x\in [-\nicefrac14,\nicefrac14]^{n-1}\}\,,
$$
orthogonal to $\xi$.
Note that for every $y\in E$ there exist unique $t\in\R$ and
$\hat x\in [-\nicefrac14,\nicefrac14]^{n-1}$ such that
$
y=(\hat x,\hat\xi\cdot\hat x)+ t \xi
$.
Then by Fubini's theorem we have that
\begin{eqnarray*}
\meas(E) &=&|\xi|_2 \int_P \meas\{t\in\R\ \ :\ \ 
(\hat x,\hat\xi\cdot\hat x)+ t \xi \in \E\}\,d\sigma
\,\\
&\leq\,&
|\xi|_2\delta \int_P d\sigma=|\xi|_2\delta \int_{[-\nicefrac14,\nicefrac14]^{n-1}}
\sqrt{1+|\hat\xi|_2^2} d\hat x
\\
&=&2^{1-n}\delta |\xi|_2^2.
\end{eqnarray*}
\end{proof}

\section{Topology, measure and continuous functions on infinite product spaces}\label{ciavatta}

\subsection*{Product topology}

Fix $\varrho>0$.
Let us consider the set $[-\varrho,\varrho]^\Z$ endowed with the {\sl product topology},
namely the coarsest topology (i.e. the topology with the fewest open sets) for which all the projections 
$\pi_{j}:[-\varrho,\varrho]^\Z\to[-\varrho,\varrho]$, with $\pi_j(\omega):=\omega_j,$ $j\in\Z,$ are continuous.
\\
We call  {\sl cylinder} a subset
$\bigotimes_{n\in \N} A_n$
 of $[-\varrho,\varrho]^\Z$ such that 
$A_n \subseteq[-\varrho,\varrho]$
with $A_n \neq [-\varrho,\varrho]$ only for finitely many $n\in\N$.
Then a basis of the product topology is given by the open cylinders,
namely cylinders $\bigotimes_{n\in \N} A_n$, where $A_n$ are open.
\\
By the Tychonoff's theorem
$[-\varrho,\varrho]^\Z$ with the product topology is a
compact Hausdorff space.


\subsection*{Product measures}

The {\sl product $\sigma$-algebra (of the Borel sets)} of $[-\varrho,\varrho]^\Z$ is generated by 
the set of cylinders $\bigotimes_{n\in \N} A_n$, where $A_n$ are
Borel sets (w.r.t. the standard topology on $[-\varrho,\varrho]$).
\\
The {\sl  probability product measure} $\mu$ on 
product $\sigma$-algebra  of $[-\varrho,\varrho]^\Z$
is defined by
$$
\mu \big(\bigotimes_{n\in \N} A_n \big):=
\prod_{n\in\N}
\frac1{2\varrho} 
 |A_n |\,,
 $$
where $|A_n|$ denotes  the usual Lebesgue measure of the Borel set $A_n$.

Through the bijective  map 
$$
\Vs:\pan\ \to \ [-\nicefrac12,\nicefrac12]^\Z\,,\qquad
\mbox{where}\qquad
\mathscr V^*_{j}(\nu):=\nu_j-j^2\,,\ \ \ j\in\Z\,,
$$
we induce the product topology and the probability product measure 
 on the set $\pan$.
Analogously for $\pan_\cS$ and $\pan_{\cS^c}$.

\subsection*{Product measures}
\medskip

Given a compact Hausdorff  space $X$ and a Banach space $E$ 
we denote by $C(X,E)$ the Banach space of continuous functions
$f:X\to E$ endowed with the uniform norm 
$$
|f|_{C(X,E)}:=\sup_{x\in X}|f(x)|_E=\max_{x\in X}|f(x)|_E\,.
$$

\begin{lemma}[Lipschitz fixed point]\label{zagana}
Let $\mathcal C$ be the closed subset of the Banach space 
$C(\pan_{\cS}\times [-\nicefrac14,\nicefrac14]^{\cS^c},\ell^\infty_{\cS^c})$
(with the product topology on $\pan_{\cS}\times [-\nicefrac14,\nicefrac14]^{\cS^c}$)
defined as 
$$
\mathcal C:=\{ w\in   C(\pan_{\cS}\times [-\nicefrac14,\nicefrac14]^{\cS^c},\ell^\infty_{\cS^c})\quad \mbox{s.t.}\quad
|w|_{C(\pan_{\cS}\times [-\nicefrac14,\nicefrac14]^{\cS^c},\ell^\infty_{\cS^c})}\leq r\}\,, 
$$
for some $0<r<1/4$.
Let $F\in C(\pan_\cS\times [-\nicefrac14,\nicefrac14]^{\cS^c}\times [-r,r]^{\cS^c},\ell^\infty_{\cS^c})$ with
$$
|F(\nu,\VSc,w)|_{\ell^\infty_{\cS^c}}\leq r
\,,\qquad
|F(\nu,\VSc,w)-F(\nu,\VSc,w')|_{\ell^\infty_{\cS^c}}
\leq 
1/2 |w-w'|_{\ell^\infty_{\cS^c}}\,,
$$
for all $ \nu\in\pan_{\cS}\,, 
w,w'\in \pan_{\cS^c}\,.$
Then there exists a unique $w\in \mathcal C$ such that
$$
w(\nu,\VSc)=F(\nu,\VSc,w(\nu, \VSc))\,.
$$
Moreover if $F$ is $L$-Lipschitz for some $L>0$ w.r.t. $\nu\in \pan_{\cS}$ 
(endowed with the $\ell^\infty$-metric)
then  $w$ is $2L$-Lipschitz in $\nu$.
Analogously if $F$ is $L'$-Lipschitz w.r.t. some  other parameter $I$ in some Banach space
then  $w$ is $2L'$-Lipschitz in $I$.
\end{lemma}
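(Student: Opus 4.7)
The plan is to apply the classical Banach fixed point theorem to the self-map
\begin{equation*}
T \colon \mathcal{C} \to \mathcal{C}\,, \qquad T(w)(\nu,\VSc) := F(\nu,\VSc, w(\nu,\VSc))\,.
\end{equation*}
First I would verify that $T$ is well-defined as a map from $\mathcal{C}$ into itself. This amounts to checking: (i) for $w \in \mathcal{C}$, the composition $(\nu,\VSc) \mapsto (\nu,\VSc,w(\nu,\VSc)) \mapsto F(\nu,\VSc,w(\nu,\VSc))$ is continuous on $\pan_\cS \times [-\nicefrac14,\nicefrac14]^{\cS^c}$ (with the product topology), which follows from the joint continuity of $F$ and of $w$; (ii) $|T(w)(\nu,\VSc)|_{\ell^\infty_{\cS^c}} \le r$ by the first hypothesis on $F$, since $|w(\nu,\VSc)|_{\ell^\infty_{\cS^c}} \le r < 1/4$ places the third argument in $[-r,r]^{\cS^c}$.

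Second, I would show that $T$ is a $1/2$-contraction on $\mathcal C$: by the second assumption on $F$, for any $w_1,w_2 \in \mathcal{C}$ and every $(\nu,\VSc)$,
\begin{equation*}
|T(w_1)(\nu,\VSc) - T(w_2)(\nu,\VSc)|_{\ell^\infty_{\cS^c}} \le \tfrac{1}{2}\,|w_1(\nu,\VSc) - w_2(\nu,\VSc)|_{\ell^\infty_{\cS^c}}\,,
\end{equation*}
and taking the supremum over $(\nu,\VSc)$ yields the desired contraction in the ambient norm. Since $\mathcal{C}$ is a closed subset of a Banach space, hence complete, the Banach contraction principle produces the unique fixed point $w \in \mathcal{C}$ satisfying $w = T(w)$.

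Third, for the Lipschitz dependence on $\nu$, I would fix $\VSc$, take $\nu,\nu' \in \pan_\cS$, and split using the triangle inequality and the fixed point equation:
\begin{align*}
|w(\nu,\VSc)-w(\nu',\VSc)|_{\ell^\infty_{\cS^c}}
&\le |F(\nu,\VSc,w(\nu,\VSc)) - F(\nu',\VSc,w(\nu,\VSc))|_{\ell^\infty_{\cS^c}} \\
&\quad + |F(\nu',\VSc,w(\nu,\VSc)) - F(\nu',\VSc,w(\nu',\VSc))|_{\ell^\infty_{\cS^c}}\\
&\le L\,|\nu-\nu'|_\infty + \tfrac12\,|w(\nu,\VSc)-w(\nu',\VSc)|_{\ell^\infty_{\cS^c}}\,.
\end{align*}
Absorbing the last term on the right gives $|w(\nu,\VSc)-w(\nu',\VSc)|_{\ell^\infty_{\cS^c}} \le 2L\,|\nu-\nu'|_\infty$. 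The same scheme, applied with $\nu$ fixed and $I$ varying, yields the $2L'$-Lipschitz estimate in $I$.

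The only subtle point is item (i) above: one must be careful that continuity in the product topology is preserved under composition with $F$, since the value space $\ell^\infty_{\cS^c}$ carries the norm topology. This is not an obstacle once one observes that $F$ is assumed continuous precisely as a map from $\pan_\cS \times [-\nicefrac14,\nicefrac14]^{\cS^c} \times [-r,r]^{\cS^c}$ (product topology on the first two factors, norm topology on the third) into $\ell^\infty_{\cS^c}$, and that composition of continuous maps between topological spaces is continuous. Thus no genuine obstruction arises and the whole argument reduces to a standard contraction on a closed subset of a Banach space, with the Lipschitz bound extracted by the usual "absorb on the left" trick.
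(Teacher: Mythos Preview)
Your proposal is correct and follows essentially the same route as the paper: define the self-map $T$ (the paper calls it $\Phi$), verify it sends $\mathcal C$ to itself, check the $1/2$-contraction, invoke the Banach fixed point theorem, and then obtain the Lipschitz bound by the standard triangle-inequality-and-absorb argument. The only minor difference is in the continuity check for item (i): the paper reads the third factor $[-r,r]^{\cS^c}$ as carrying the product topology and observes that this is weaker than the $\ell^\infty$-topology (so $w$, being norm-continuous, is a fortiori product-continuous into it), whereas you read it as carrying the norm topology; either reading makes the composition continuous, so there is no gap.
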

\begin{proof}
First note that if $w\in\mathcal C$ then 
$F(\nu,\VSc,w(\nu,\VSc))\in\mathcal C$, since the product topology in 
$[-r,r]^{\cS^c}$ is weaker than the one induced by the $\ell^\infty$-norm.
Set $\Phi:\mathcal C\to \mathcal C$ by
$$
(\Phi(w))(\nu,\VSc):=F(\nu,\VSc,w(\nu,\VSc))\,.
$$
Let us check that $\Phi$ is a contraction on $\mathcal C$.
Indeed
\begin{eqnarray*}
&&|\Phi(w)-\Phi(w')|_{\mathcal C}
= \\
&& \sup_{\nu\in \pan_\cS,\,
\VSc\in [-\nicefrac14,\nicefrac14]^{\cS^c}} |F(\nu,\VSc,w(\nu,\VSc))-F(\nu,\VSc,w'(\nu,\VSc))|_{\ell^\infty_{\cS^c}}
\\
&& \leq
 \frac12 \sup_{\nu\in \pan_\cS,\,
\VSc\in [-\nicefrac14,\nicefrac14]^{\cS^c}} |w(\nu,\VSc)-w'(\nu,\VSc)|_{\ell^\infty_{\cS^c}}
=\frac12 |w-w'|_{\mathcal C}\,.
\end{eqnarray*}
The existence of the fixed point follows from the Contraction Mapping Theorem on Banach spaces.
\\
Assume now that 
$F$ is $L$-Lipschitz for some $L>0$ w.r.t. $\nu\in \pan_{\cS}$ 
(endowed with the $\ell^\infty$-metric $d_\infty$). Then
\begin{eqnarray*}
&&
|w(\nu,\VSc)-w(\nu',\VSc)|_{\ell^\infty_{\cS^c}}
\leq
|F(\nu,\VSc,w(\nu,\VSc))-F(\nu',\VSc,w(\nu,\VSc))|_{\ell^\infty_{\cS^c}}
\\
&& +
|F(\nu',\VSc,w(\nu,\VSc))-F(\nu',\VSc,w(\nu',\VSc))|_{\ell^\infty_{\cS^c}}
\\
&&
\leq L d_\infty(\nu,\nu')+\frac12|w(\nu,\VSc)-w(\nu',\VSc)|_{\ell^\infty_{\cS^c}}
\end{eqnarray*}
implying
$$
|w(\nu,\VSc)-w(\nu',\VSc)|_{\ell^\infty_{\cS^c}}
\leq
2L d_\infty(\nu,\nu')\,.
$$
\end{proof}

\medskip

The next lemma regards the analyticity of the map $ \mathfrak i$ in \eqref{manodedios}.
Without loss of generality we consider here only the case $\cS=\Z$
since we are not assuming $I_j\neq 0.$
Let us first introduce the notation $\C^\infty_s:=\{\varphi=(\varphi_j)_{j\in\Z} \ \mbox{with}\ |\Im \varphi_j|<s,\
\forall\ j\in\Z
 \}$ for some $s>0$ and  the equivalence relation $\varphi\sim\varphi'$ iff
 $\varphi-\varphi'\in 2\pi\Z.$
 We set $\T^\infty_s:=\C^\infty_s/\sim$.

\begin{lemma}\label{lemmadedios}
For $\sqrt I\in {\bar B}_{r}(\sob_{p})$ the map $ \mathfrak i$ in \eqref{manodedios}
can be extended to an analytic map
$$
 \mathfrak i: \T^\infty_s \to  \tw_p,\quad 
 [\varphi]=\big[(\varphi_j)_{j\in\Z}\big]\mapsto (\sqrt{I_j} e^{\im \varphi_j} )_{j\in\Z}\,,
$$
for any $s>0.$
\end{lemma}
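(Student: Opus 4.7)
The plan is to carry out three steps: (i) show that $\mathfrak i$ is well-defined and uniformly bounded as a map $\C^\infty_s \to \tw_p$; (ii) around each base point, exhibit an explicit Taylor expansion in homogeneous polynomials $\ell^\infty(\C) \to \tw_p$ and control its tail in $|\cdot|_{\tw_p}$; (iii) pass to the quotient $\T^\infty_s$ using $2\pi$-periodicity in each variable. The diagonal/separated structure of $\mathfrak i$ (the $j$-th component depends only on $\varphi_j$) makes both the uniform estimate and the remainder bound essentially scalar, so no infinite-dimensional subtlety enters beyond checking uniform convergence of the Taylor series in the $\sup_j(\cdot)\jjap{j}^p$ norm.

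For (i), if $\varphi\in\C^\infty_s$ then $|e^{\im\varphi_j}|=e^{-\Im\varphi_j}\le e^s$ for every $j$, hence
$$
|\mathfrak i(\varphi)|_{\tw_p}=\sup_{j\in\Z}\sqrt{I_j}\,|e^{\im\varphi_j}|\jjap{j}^p\le e^s\sup_{j\in\Z}\sqrt{I_j}\jjap{j}^p\le r\,e^s,
$$
by the assumption $\sqrt I\in\bar B_r(\tw_p)$. The estimate is uniform on the whole strip, and each component being $2\pi$-periodic in $\varphi_j$ ensures that $\mathfrak i$ descends to $\T^\infty_s$.

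For (ii), fix a representative $\varphi^0\in\C^\infty_s$ and choose $\rho>0$ with $|\Im\varphi^0_j|+\rho<s$ for all $j$ (which is possible since each $|\Im\varphi^0_j|<s$; in the relevant setup one has local uniform bounds on $|\Im\varphi^0_j|$). For $\psi\in\ell^\infty(\C)$ with $|\psi|_{\ell^\infty}\le\rho$ one has $\varphi^0+\psi\in\C^\infty_s$ and the scalar expansion
$$
\mathfrak i_j(\varphi^0+\psi)=\sqrt{I_j}\,e^{\im\varphi^0_j}\sum_{n\ge 0}\frac{(\im\psi_j)^n}{n!}.
$$
Define $P_n:\ell^\infty(\C)\to\tw_p$ componentwise by $(P_n(\psi))_j:=\sqrt{I_j}\,e^{\im\varphi^0_j}(\im\psi_j)^n/n!$; this is a bounded homogeneous polynomial of degree $n$, being the restriction to the diagonal of the $n$-linear symmetric form $(\psi^{(1)},\ldots,\psi^{(n)})\mapsto\bigl(\sqrt{I_j}e^{\im\varphi^0_j}\im^n\psi^{(1)}_j\cdots\psi^{(n)}_j/n!\bigr)_{j\in\Z}$. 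The scalar Taylor remainder bound gives the key estimate
$$
\Bigl|\mathfrak i(\varphi^0+\psi)-\sum_{n=0}^N P_n(\psi)\Bigr|_{\tw_p}\le \sup_{j}\sqrt{I_j}\,e^{s}\frac{|\psi_j|^{N+1}}{(N+1)!}e^{|\psi_j|}\jjap{j}^p\le r\,e^{s+\rho}\,\frac{\rho^{N+1}}{(N+1)!},
$$
whence the series $\sum_n P_n(\psi)$ converges to $\mathfrak i(\varphi^0+\psi)-\mathfrak i(\varphi^0)$ uniformly in $\psi$ on the ball $|\psi|_{\ell^\infty}\le\rho$. By the standard Banach-space characterisation of holomorphy (uniformly convergent power series of continuous homogeneous polynomials implies Fréchet analyticity), $\mathfrak i$ is analytic in the neighbourhood $\varphi^0+\{|\psi|_{\ell^\infty}\le\rho\}$ of $\varphi^0$. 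Since this expansion depends on $\varphi^0$ only modulo $2\pi\Z^\Z$ and the topology on $\T^\infty_s$ is precisely the quotient $\ell^\infty$-topology, step (iii) is automatic.

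The only point that requires any care is the uniform-in-$j$ control in the remainder estimate of step (ii); but because the weight $\jjap{j}^p$ is absorbed into $\sqrt{I_j}\jjap{j}^p\le r$ and the rest of the bound is purely scalar, there is no genuine obstacle, and the proof reduces to the one-variable Taylor series for the exponential.
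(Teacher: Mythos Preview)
Your proof is correct and complete. It differs from the paper's in the mechanism used to certify analyticity: the paper sets up the local chart $\Phi^{-1}(\psi)=[\varphi+\psi]$, computes the Fr\'echet derivative $Dg(\psi)[\psi']=(\im\sqrt{I_j}e^{\im(\varphi_j+\psi_j)}\psi'_j)_j$ explicitly, and shows it is Lipschitz in $\psi$ as a map $U_\rho\to\mathcal L(\ell^\infty,\tw_p)$, then invokes (implicitly) the fact that complex Fr\'echet differentiability between complex Banach spaces implies holomorphy. You instead write down the full Taylor series in homogeneous polynomials and bound the remainder uniformly, which is more elementary and self-contained since it avoids appealing to the differentiable-implies-analytic theorem for complex Banach spaces. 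Both routes exploit the same diagonal structure to reduce all estimates to the scalar exponential bound $|e^{\im\varphi_j}|\le e^s$ together with $\sqrt{I_j}\jjap{j}^p\le r$. Your parenthetical about needing $\sup_j|\Im\varphi^0_j|<s$ to pick a uniform $\rho$ is exactly the same restriction the paper imposes when it sets $\rho=\min\{1,\,s-|\Im\varphi|\}/2$; in both cases this is just the requirement that $[\varphi^0]$ be an interior point of the strip in the $\ell^\infty$ topology, so it is not a gap.
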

\begin{proof}
 For $\varphi\in \C^\infty_s$ we denote by
 $[\varphi]\in\T^\infty_s$   the equivalence class of $\varphi.$
$\T^\infty_s$ is a metric space endowed with the
distance
 $$
 d([\varphi],[\psi]):=\min_{\psi'\in[\psi]}|\varphi-\psi'|_\infty\,,
 $$
where $|\cdot|_\infty$ is the norm on $\ell^\infty=\ell^\infty(\C).$
Moreover it is a Banach manifold. Indeed given every point
$[\varphi]\in \T^\infty_s$, set $\rho:=\min\{1, s-|\Im \varphi|\}/2$ 
and consider the open
 ball $B_\rho([\varphi])\subset \T^\infty_s$ of radius $\rho$
centered in $[\varphi]$  and the open ball
$U_\rho$  of radius $\rho$ centered at the origin of 
$\ell^\infty;$ then a local chart is  $\Phi:B_\rho([\varphi])\to 
U_\rho$ defined so that
$\Phi^{-1}(\psi):=[\varphi+\psi].$
We claim that
$$
g:=\mathfrak i\circ \Phi^{-1} : U_\rho \to \tw_p\,,\qquad
g(\psi):=(\sqrt{I_j} e^{\im (\varphi_j+\psi_j)} )_{j\in\Z}
$$
is analytic since the Frechet derivative $D g: U_\rho \to \mathcal L(\ell^\infty,\tw_p)$ is continuous.
Indeed for every $\psi'\in \ell^\infty$
$$
D g(\psi)[\psi']
=
(\im \sqrt{I_j} e^{\im (\varphi_j+\psi_j)} \psi'_j)_{j\in\Z}
$$
and for $\psi,\tilde\psi\in U_\rho$
the operator norm (recall $\sqrt I\in {\bar B}_{r}(\sob_{p})$)
satisfies
\begin{eqnarray*}
\|D g(\psi)-D g(\tilde\psi)\|_{\rm {op}}
&=&
\sup_{\psi'\in \ell^\infty, |\psi'|=1}
\big|\big(\im \sqrt{I_j} e^{\im \varphi_j} (e^{\im \psi_j}-
e^{\im \tilde\psi_j})\psi'_j\big)_{j\in\Z}\big|_{\tw_p}
\\
&\leq& 
r e^{|\Im \varphi|+\rho}
\big|\big( (1-
e^{\im (\tilde\psi_j-\psi_j)})\big)_{j\in\Z}\big|_{\ell^\infty}
\\
&
\leq&
r e^{|\Im \varphi|+2\rho}
|\tilde\psi-\psi|_{\ell^\infty}\,.
\end{eqnarray*}
\end{proof}

{\sl On the density of linear flow $\nu t$ on the flat torus $\cT_I$ (recall \eqref{pippero}).}

\noindent
We note that if $|\nu|_{\ell^\infty}<\infty$ the  linear flow $t\mapsto \nu t$ is not dense on the torus $\cT_I$,
otherwise\footnote{Recall the 
definition of $\cS_I$ in \eqref{shula}.} $\nu \mathbb Q^{\cS_I}$ would be a countable dense subset but 
$\ell^\infty$-based topologies
are not separable. On the other hand if the sequence $\nu_j$  increases very fast (e.g. super-exponentially)
the flow is dense.
\begin{lemma}\label{uccellagione}
 If for any $n\geq 1$ the vector
$\nu^{(n)}$
with entries  $\nu_j$, $|j|\leq n,$ $j\in \cS_I$ is rationally independent and 
$$
\lim_{n\in\cS_I, |n|\to \infty} |\nu_n| \sum_{j\in\cS_I,\,|j|>|n|} |\nu_j|^{-1}=0
$$
then the flow is dense.
\end{lemma}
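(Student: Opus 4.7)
The plan is to prove density of the angular orbit $\{\nu t \bmod 2\pi\}_{t\in\R}$ in $\T^{\cS_I}$ endowed with the $\ell^\infty$-metric $(\varphi,\psi)\mapsto \sup_j|\varphi_j-\psi_j|_\T$, where $|x|_\T:=\min_{n\in\Z}|x-2\pi n|$; density of $\{\mathfrak i(\nu t)\}_{t\in\R}$ in $\cT_I\subset\tw_p$ then follows from the Lipschitz continuity of the immersion $\mathfrak i$ on bounded subsets of $(\T^{\cS_I},|\cdot|_{\ell^\infty})$ (cf. Lemma \ref{lemmadedios}), with constant bounded by $|\sqrt I|_p$. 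I enumerate $\cS_I=\{j_k\}_{k\ge 1}$ so that $|j_k|$ is non-decreasing, abbreviate $\nu_k:=\nu_{j_k}$, $\phi_k:=\varphi_{j_k}$, and observe that the decay hypothesis forces $|\nu_k|\to\infty$ and $\sum_k|\nu_k|^{-1}<\infty$; it also implies $\max_{k\le N}|\nu_k|\cdot\sum_{l>N}|\nu_l|^{-1}\to 0$ as $N\to\infty$, by applying the hypothesis at the index attaining the maximum.

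Fix a target $\varphi\in\T^{\cS_I}$ and $\eps>0$. First I would choose $N$ so large that both $2\pi|\nu_k|\sum_{l>k}|\nu_l|^{-1}<\eps/2$ for all $k\ge N$ and $2\pi\max_{k\le N}|\nu_k|\sum_{l>N}|\nu_l|^{-1}<\eps/2$. By the classical Kronecker density theorem applied to the rationally independent tuple $(\nu_1,\ldots,\nu_N)$, I pick $t^*\in\R$ with $|\nu_i t^*-\phi_i|_\T<\eps/2$ for every $i\le N$. Next, I inductively define $d_k\in[0,2\pi)$ for $k>N$ by
\begin{equation*}
d_k\equiv \phi_k-\nu_k t^*-\sum_{N<l<k} d_l\,\nu_k/\nu_l \pmod{2\pi},
\end{equation*}
and set $\Delta:=\sum_{k>N}d_k/\nu_k$, a series which converges absolutely because $\sum_k|\nu_k|^{-1}<\infty$.

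Taking $t:=t^*+\Delta$, I split the sum defining $\nu_i\Delta$ at $l=i$; the inductive definition of $d_i$ yields, for $i>N$,
\begin{equation*}
\nu_i t\equiv \phi_i+\sum_{l>i}d_l\,\nu_i/\nu_l\pmod{2\pi},
\end{equation*}
so $|\nu_i t-\phi_i|_\T\le 2\pi|\nu_i|\sum_{l>i}|\nu_l|^{-1}<\eps/2$. For $i\le N$, the triangle inequality and the a priori bound $|\nu_i\Delta|\le 2\pi|\nu_i|\sum_{l>N}|\nu_l|^{-1}<\eps/2$ give $|\nu_i t-\phi_i|_\T<\eps$. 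Hence $\sup_k|\nu_k t-\phi_k|_\T<\eps$, establishing the required density.

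The main obstacle is controlling the interference between the two stages: the correction $\Delta$, introduced to bring the tail coordinates $i>N$ close to $\phi_i$, inevitably perturbs the Kronecker approximation already achieved on the head $i\le N$. It is precisely the quantitative decay $|\nu_n|\sum_{|j|>|n|}|\nu_j|^{-1}\to 0$ which both guarantees the summability needed to define $\Delta$ and keeps the head-perturbation $|\nu_i\Delta|$ uniformly below $\eps/2$ for all $i\le N$, thereby closing the argument.
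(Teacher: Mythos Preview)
Your proof is correct and follows essentially the same two-stage strategy as the paper: apply Kronecker's theorem to the finitely many head coordinates, then add a summable tail correction $\Delta=\sum_{k>N}d_k/\nu_k$ and use the decay hypothesis to control the cross-interference. Your inductive definition of the $d_k$ (absorbing all previous corrections) is a minor refinement over the paper's independent choice of increments $t_j$; it makes the tail estimate cleaner, since for each $i>N$ only the terms with $l>i$ survive, which is exactly what the hypothesis bounds.
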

\begin{proof}
Fix $\delta>0$, $\bar\varphi$ on the torus and call $d$ the distance on 
the one dimensional  torus $d_{\T^1}(\varphi,\psi)=\min_{k\in\Z}|\varphi-\psi-2\pi k|$
for $\varphi,\psi\in\T^1$. 
We want to show that there exists a time $T\in\R$ such that
$d_{\T^1}(\bar\varphi_n,\nu_n T)\leq \delta$ for every $n\in\cS_I.$
Let $n_0\in\cS_I$ large enough such that
 $|\nu_n| \sum_{j\in\cS_I,\,|j|>|n|} |\nu_j|^{-1}\leq \delta/2\pi$ for every $n\in\cS_I,\,|n|\geq |n_0|$.
Since $\nu^{(n_0)}$ is rationally independent there exists $T_0>0$ such that
$d_{\T^1}(\bar\varphi_j,\nu_j T_0)\leq \delta/2$ for every $j\in\cS_I, |j|\leq |n_0|$.
For every $j\in\cS_I,|j|>|n_0|$ there exists $t_j\in\R$ with $|t_j|\leq\pi/|\nu_j|$ such that $d_{\T^1}(\bar\varphi_j,\nu_j (T_0+t_j))=0$. Set $T:=T_0+T'$ where $T':= \sum_{j\in\cS_I,|j|>|n_0|}t_j.$
Note that $|T'|\leq \delta/2|\nu_{n_0}|.$
Then for $n\in\cS_I, |n|\leq |n_0|$ we have $d_{\T^1}(\bar\varphi_n,\nu_n T)\leq \delta/2+
|\nu_n T'|\leq \delta$. Finally for $n\in\cS_I, |n|> |n_0|$ we have
$d_{\T^1}(\bar\varphi_n,\nu_n T)\leq |\nu_n|\sum_{j\in\cS_I,|j|>|n_0|, j\neq n_0}|t_j|\leq \delta/2,$
concluding the proof.
\end{proof}
\noindent
Finally, since in our application to NLS $\nu_j\sim j^2$, if $\cS_I$ is sparse enough the condition of Lemma \ref{uccellagione}
is satisfied and the flow is dense.

%
%
%
	\newcommand{\etalchar}[1]{$^{#1}$}
	\def\cprime{$'$}

\end{document}